\newtheorem{thm}{Theorem}[section]
\newtheorem{prop}[thm]{Proposition}
\newtheorem{lem}[thm]{Lemma}
\newtheorem{cor}[thm]{Corollary}
\numberwithin{equation}{section}
\theoremstyle{definition}
\newtheorem{definition}[thm]{Definition}
\newtheorem{remark}[thm]{Remark}
\newcommand{\End}{{\rm End}}
\newcommand{\cal}{\mathcal}
\newcommand{\ks}{{\cal S}}
\newcommand{\kz}{{\cal Z}}
\newcommand{\ZZ}{\mathbb{Z}}
\newcommand{\QQ}{\mathbb{Q}}
\newcommand{\RR}{\mathbb{R}}
\newcommand{\CC}{\mathbb{C}}
\newcommand{\PP}{\mathbb{P}}
\renewcommand{\to}{\xymatrix@1@=15pt{\ar[r]&}}
\renewcommand{\rightarrow}{\xymatrix@1@=15pt{\ar[r]&}}
\renewcommand{\leftarrow}{\xymatrix@1@=15pt{&\ar[l]}}
\renewcommand{\mapsto}{\xymatrix@1@=15pt{\ar@{|->}[r]&}}
\renewcommand{\twoheadrightarrow}{\xymatrix@1@=18pt{\ar@{->>}[r]&}}
\renewcommand{\hookrightarrow}{\xymatrix@1@=15pt{\ar@{^(->}[r]&}}
\newcommand{\hook}{\xymatrix@1@=15pt{\ar@{^(->}[r]&}}
\newcommand{\congpf}{\xymatrix@L=0.6ex@1@=15pt{\ar[r]^-\sim&}}
\renewcommand{\cong}{\simeq}
\begin{document}

\title[]{Complex multiplication in twistor spaces}

\author[D.\ Huybrechts]{D.\ Huybrechts}

\address{Mathematisches Institut and Hausdorff Center for Mathematics,
Universit{\"a}t Bonn, Endenicher Allee 60, 53115 Bonn, Germany}
\email{huybrech@math.uni-bonn.de}

\begin{abstract} \noindent
Despite the transcendental nature of the twistor construction,
the algebraic fibres of the twistor space of a K3 surface
share certain arithmetic properties. We prove that 
for a polarized K3 surface with complex multiplication, all
algebraic fibres of its twistor space away from the equator have complex multiplication as well.
 \vspace{-2mm}
\end{abstract}
%\dedicatory{}

\maketitle
%{\let\thefootnote\relax\footnotetext{The author is supported by the SFB/TR 45 `Periods,
%Moduli Spaces and Arithmetic of Algebraic Varieties' of the DFG
%(German Research Foundation).}
%\marginpar{}
%}

Let $S$ be a complex projective K3 surfaces with an ample class $\ell={\rm c}_1(L)\in H^2(S,\ZZ)$. Viewing
$\ell$ as a K\"ahler class and using the existence of a Ricci-flat K\"ahler form representing $\ell$,
one constructs the twistor space which consists of a non-algebraic complex threefold $\ks$ and 
a natural holomorphic
projection $\ks\to\PP^1$. All
fibres $\ks_t$ are K3 surfaces, but only countably many of them are again algebraic. However, the set of algebraic fibres
is dense and away from the equator of $\PP^1\cong S^2$ all algebraic fibres $\ks_t$
have the same Picard number $\rho(S)$.

The main result of this paper proves that despite the transcendental nature of the twistor construction, which relies
on  Yau's solution of the Calabi conjecture \cite{Yau}, the K3 surface $S$ passes on
certain arithmetic features to other algebraic fibres of the twistor family.
We will demonstrate this for K3 surfaces with complex multiplication (CM). 

A  K3 surfaces $S$ is said to have CM if the
endomorphism ring $K_S\coloneqq{\rm End}_{\rm Hdg}(T(S))$ of the Hodge structure provided by the rational transcendental lattice
$T(S)\coloneqq{\rm NS}(S)^\perp\otimes\QQ$ is a CM field with $\dim_{K_S}T(S)=1$, see Sections \ref{sec:EndoT}, and \ref{sec:CMK3}.

K3 surfaces with CM are  defined over number fields and they are
exactly those K3 surfaces that are defined over number fields and have algebraic periods \cite{Tretkoff}. 
Examples include all K3 surfaces of maximal Picard number $20$, for which
the following theorem is immediate. Note, however, that there exist infinitely many K3 surfaces with CM 
and arbitrarily fixed even Picard number $\leq 20$. \footnote{More precisely, for any CM field of degree $\leq20$ there exists a K3 surface with CM that realizes the prescribed CM field, see \cite[Prop.\ 3.3.8]{HuyK3} for references. It seems likely that the set of
polarized K3 surfaces with CM of fixed degree is dense in the moduli space of polarized K3 surfaces, but I am not aware of a reference for this.}

\begin{thm}\label{thm:main}
Let $(S,L)$ be a polarized, complex K3 surface with complex multiplication and assume $S'$ is an algebraic
fibre of the associated twistor space $\ks\to\PP^1$ over a point not contained in the equator $S^1\subset S^2$
(cf.\ Sections \ref{sec:ReviewTwistor}, \ref{sec:TwistorConstrT}, and \ref{sec:Dict}).
\begin{enumerate}
\item[{\rm (i)}] Then $S'$ has complex multiplication. 
\item[{\rm (ii)}] The maximal totally real subfields of the CM endomorphism fields $K_S$ and $K_{S'}$ coincide.
\end{enumerate}
\end{thm}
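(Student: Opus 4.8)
The plan is to transport everything to the transcendental Hodge structures on a fixed ambient $\QQ$-space. Using the dictionary of Section~\ref{sec:Dict}, put $V := (\NS(S)\cap\ell^{\perp})^{\perp}\subset H^{2}(S,\QQ)$, a quadratic space of rank $23-\rho(S)$ that simultaneously carries the weight-two Hodge structure of $S$ (with $T(S)\subset V$ the transcendental part and $\QQ\ell$ a $(1,1)$-summand) and, by the Picard-number statement, the Hodge structure of every off-equator algebraic fibre $S'$: there $T(S')$ is the orthogonal complement in $V$ of the single extra N\'eron--Severi class, and $H^{2,0}(S')$ is spanned by the corresponding twistor period $\sigma'$. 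All twistor periods lie on the twistor conic $C\subset\PP(W_{\CC})$, where $W_{\CC}=\CC\ell\oplus H^{2,0}(S)\oplus H^{0,2}(S)$. The first, arithmetic, input: since $S$ has CM, the $K_S$-eigenspace decomposition of $T(S)\otimes\overline{\QQ}$ is defined over $\overline{\QQ}$, so $W_{\CC}$ descends to a three-dimensional $\overline{\QQ}$-subspace $W_{\overline{\QQ}}$ of $V\otimes\overline{\QQ}$; the pairing exhibits $W_{\overline{\QQ}}$ as the orthogonal sum of a line and a hyperbolic plane spanned by the isotropic eigenlines $H^{2,0}(S),H^{0,2}(S)$, and $C$ becomes a conic over $\overline{\QQ}$ through the $\overline{\QQ}$-rational point $[H^{2,0}(S)]$.

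Next I would show $\sigma'$ is algebraic. As $S'$ is projective, $\NS(S')$ strictly contains $\NS(S)\cap\ell^{\perp}$; pick a nonzero $w\in\NS(S')\cap V$. Then $\langle w,\ell\rangle$ and the values of $\langle w,-\rangle$ on $H^{2,0}(S)\oplus H^{0,2}(S)$ are algebraic numbers, so $w^{\perp}$ cuts out a line of $\PP(W_{\overline{\QQ}})$ defined over $\overline{\QQ}$; since $\sigma'$ lies both on this line and on the conic $C$, it is an $\overline{\QQ}$-rational point of $C$. Equivalently the Hodge filtration of $T(S')$ is defined over $\overline{\QQ}$ relative to the Betti lattice, i.e.\ $S'$ has algebraic periods.

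It remains to upgrade ``algebraic periods'' to complex multiplication; this is the heart of the matter, and it is precisely here that the CM of $S$ -- not just algebraicity -- must be used, algebraic periods alone being insufficient (Schneider). I would argue via Mumford--Tate groups: the Hodge cocharacter $\mu'$ of $T(S')$ is defined over $\overline{\QQ}$, so $\mathrm{MT}(T(S'))_{\overline{\QQ}}$ is generated by the Galois conjugates of $\mathrm{im}(\mu')$, and it suffices that these one-dimensional tori pairwise commute. The point is the rigidity of the Galois orbit of $\sigma'$ inside $V\otimes\overline{\QQ}$: each conjugate $\tau(\sigma')$ lies in the translate $\tau(W_{\overline{\QQ}})$ and is supported, in the $K_S$-eigenbasis, on $\ell$ together with a single Galois-conjugate pair of embeddings of $K_S$ -- forced by the CM of $S$ -- and the combinatorics of the hyperbolic pairing then pins down the common eigenspaces of the conjugate cocharacters, giving commutativity, hence CM. For (ii): the conjugate pair occurring in $\sigma'$ consists of embeddings of $K_{0,S}$, so reading off the field generated by its Galois orbit yields $K_{0,S}\subseteq K_{0,S'}$; symmetrically, the twistor line through $S'$ determined by $w$ passes through $S$, and the same argument with $S$ and $S'$ interchanged gives the reverse inclusion. (Alternatively, once $T(S')$ is shown to be a CM Hodge structure -- equivalently, once $S'$ is shown to descend to a number field -- one may simply invoke Tretkoff's criterion \cite{Tretkoff}, and recover (ii) by comparing the CM types realised on the $\overline{\QQ}$-conic $C$.)

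I expect the principal obstacle to be exactly this last step: deducing commutativity of $\mathrm{MT}(T(S'))$, i.e.\ that $S'$ is defined over a number field, from the mere algebraicity of $\sigma'$ together with the CM structure transported along $C$. The subtle interaction is between the $\overline{\QQ}$-structure $W_{\overline{\QQ}}$ supplied by the CM of $S$ and the $\QQ$-structure of $\NS(S')$ (which together dictate \emph{which} points of $C$ correspond to algebraic fibres); controlling the Galois orbit of such a point finely enough to force the Mumford--Tate torus -- and to identify its maximal totally real subfield -- is the technical core.
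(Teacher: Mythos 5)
Your first two steps are sound and match the paper's Lemma \ref{lem:QbarQbar}: since $S$ has CM, $H^{2,0}(S)$ is a $K_S$-eigenline and hence defined over $\bar\QQ$ inside $T(S)\otimes\bar\QQ$, the twistor conic is a $\bar\QQ$-conic, and intersecting with the rational hyperplane $w^\perp$ shows the period of any algebraic fibre is a $\bar\QQ$-point. But the step you yourself flag as ``the heart of the matter'' is not actually carried out, and it is exactly the content of the theorem. The assertion that the Galois conjugates of $\sigma'$ are each supported on $\ell$ plus one conjugate pair of $K_S$-eigenlines, and that ``the combinatorics of the hyperbolic pairing then pins down the common eigenspaces of the conjugate cocharacters, giving commutativity,'' is a description of a hoped-for argument, not a proof: the Mumford--Tate group of $T(S')$ is governed by the rational structure of $T(S')=\ell'^\perp$, which is \emph{not} adapted to the $K_S$-eigenspace decomposition of $T(S)\otimes\bar\QQ$, and nothing in your sketch produces a single nontrivial Hodge endomorphism of $T(S')$ defined over $\QQ$. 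The Tretkoff fallback is circular: Tretkoff's criterion requires $S'$ to be defined over $\bar\QQ$ \emph{in addition to} having algebraic period ratios, and definability over $\bar\QQ$ is not established -- indeed the paper's Corollary \ref{cor:notQbar} shows that algebraic fibres over the equator have algebraic period fields (by the very Lemma \ref{lem:QbarQbar} argument you reproduce) yet are \emph{not} defined over $\bar\QQ$ and \emph{not} CM.

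This last point exposes the structural gap: everything you establish (the $\bar\QQ$-conic, algebraicity of $\sigma'$) holds equally for algebraic equator fibres, which by Lemma \ref{lem:equatorbad} are never of CM type when $\rho(S)<20$. So any completion of your argument must use $(\ell.\ell')\neq 0$ in an essential way, and your sketch never locates where. The paper's proof does this concretely: writing $K_S=\QQ(\alpha)$ with $\alpha$ a norm-one primitive element and $\sigma'=a\sigma+b\bar\sigma+\ell$, it computes the period coordinates of $T'$ as $x_i'=a(\alpha^{i-1}-\alpha^{1-i})-m\alpha^{1-i}$ with $m=(\ell.\ell')$, shows by direct calculation (Lemma \ref{lem:KTOPLUS}) that multiplication by the primitive element $\alpha+\alpha^{-1}$ of $K_S^0$ preserves $\bigoplus\QQ\cdot x_i'$ -- the identity $\sum a_ix_i'=-mf$ for $f\in K_S^0$ is where $m\neq0$ is indispensable -- and then invokes the converse criterion of Lemma \ref{lem:converseCMperiod}, which converts a totally real field of degree $\geq\tfrac12\dim_\QQ T'$ acting on the period coordinates into actual Hodge endomorphisms, forcing CM and yielding $K_S^0=K_{S'}^0$ simultaneously. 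You would need to supply an analogue of this computation (or of Lemmas \ref{lem:converseCMperiod} and \ref{lem:KTOPLUS}) to close the gap; as written, parts (i) and (ii) both remain unproven.
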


More geometrically, the endomorphism field $K_S={\rm End}_{\rm Hdg}(T(S))$ can be viewed,
 using Poincar\'e duality, as the non-trivial
part of the space of Hodge classes on $S\times S$, cf.\ Remark \ref{rem:EndHodgeS2}:
$$K_S\cong (T(S)\otimes T(S))^{2,2}\subset H^{2,2}(S\times S,\QQ)\cong\left({\rm NS}(S)\otimes {\rm NS}(S)\right)_\QQ\oplus 
(T(S)\otimes T(S))^{2,2}.$$
The maximal totally real subfield of $K_S$ is the subspace $(S^2T(S))^{2,2}$, which is a subfield of index at most two. Although non-trivial classes in this subspace do not
deform along the twistor family $\ks\to\PP^1$,  cf.\ Section \ref{sec:EndoasHodge}, the same classes recur  at any other
algebraic fibre $S'$ away from the equator. More precisely, there is a multiplicative isomorphism
$$(S^2T(S))^{2,2}\cong (S^2T(S'))^{2,2},$$
where the product on both sides corresponds to the composition of correspondences
given by classes on the squares of $S$ and $S'$.

\smallskip

%We also put forward a conjecture that compares the \emph{period value} of the CM fibres in
%a twistor space. The period value of a complex K3 surfaces $S$ with complex multiplication
%is defined as $$r_S\coloneqq(\sigma^{\rm o}.\gamma)\in\CC^\ast/\bar\QQ^\ast,$$
%where $0\in \gamma\in T(S)$ and $0\ne \sigma^0\in H^0(S^{\rm o},\Omega^2_{S^{\rm o}/\bar\QQ})$
%are chosen arbitrarily. Here,  $S^{\rm o}$ is the K3 surface over $\bar \QQ$ with $S\cong S^{\rm o}\times_{\bar\QQ}\CC$. Using complex multiplication, one shows that $r_S$ is independent of any choice and one expects $r_S\ne1$, i.e.\ that $(\sigma^{\rm o}.\gamma)$ 
%is transcendental, see Section \ref{sec:Transc} for more background.
%We will present evidence that  $$r_S=r_{\ks_t}$$
%for the twistor family $\ks\to\PP^1$ 
%associated with a polarized K3 surface $(S,L)$ with complex multiplication and
%all algebraic fibres $\ks_t$, $t\not\in S^1$, which according to Theorem \ref{thm:main} have complex multiplication.

\medskip

\noindent
{\bf Outline:} In Section \ref{sec:Twistor} one finds a quick reminder of the twistor construction. 
In the subsequent Sections \ref{sec:PeriodsHodge} and \ref{sec:HodgeTwistor},
we discuss abstract Hodge structures of K3 type and explain the twistor construction
in this setting. In particular, we show that the CM property is equivalent
to the equality $k_T=K_T$ of the period and the endomorphism field  (Proposition \ref{prop:CMKk}) and that
excessive  jumps of the Picard number only occur along the equator (Proposition \ref{prop:Picardjump}). The main
result in this part is Proposition \ref{prop:FinalHodge}, which is the Hodge theoretic version of Theorem \ref{thm:main}.
The final Corollary \ref{cor:CMfieldreconstruction} explains how to reconstruct the CM field $K_{S'}$ from its maximal
totally real subfield $K_{S'}^0=K_S^0$. In Section \ref{sec:PV} we discuss the notion of the period value of
a K3 surfaces defined over a number field in the abstract Hodge theoretic setting and compute the
period values of all CM twistor fibres.
Section \ref{sec:K3CM} translates the abstract  Hodge theory into geometric results and proves Theorem \ref{thm:main}. The section also contains a proof of the known fact that K3 surfaces with CM are defined
over number fields that does not use the Kuga--Satake construction (Proposition \ref{prop:CMimpliesQbar}), and a discussion of transcendental periods.

\medskip

\noindent
{\bf Acknowledgments:} I am grateful to F.\ Charles for a discussion related to \cite{HuyAnn} that also influenced this paper. Thanks also to J.\ de Jong and W.\ Sawin for a discussion
that helped to put straight my ideas about period values of twistor fibres. I am deeply indebted to two referees
for their careful reading of the paper and many interesting and helpful suggestions.

%%%%%%%%%%%%%%%%%%%
\section{Review of the twistor construction for K3 surfaces}\label{sec:Twistor}
For basic facts about hyperk\"ahler geometry and the twistor construction, the reader may consult
the survey \cite{GHJ,Hitchin} or the extensive \cite{Besse}. Here, we merely sketch the main features and
stress the analytic nature of the construction.

\subsection{Ricci flat metrics}\label{sec:ReviewTwistor}
Let $S$ be a K3 surface and think of it as a differentiable manifold $M$ endowed with a complex structure $I$. Then, according to \cite{Yau}, any K\"ahler class $\alpha\in H^{1,1}(S,\RR)$ is represented by a unique K\"ahler form $\omega$ such that the two real volume forms $\omega^2$ and $\sigma\bar\sigma$, where $0\ne \sigma\in H^{2,0}(S)$, differ only by a constant scalar. After normalizing $\sigma$ appropriately, we may
assume $\omega^2=\sigma\bar\sigma$.
In terms of the complex structure $I$ and the underlying Ricci-flat K\"ahler metric $g$, the K\"ahler
form $\omega$ can be written as $\omega=g(I(~),~)$.

As the holomorphic volume form $\sigma$ is actually a holomorphic symplectic structure,
the holonomy group of $g$ is ${\rm Sp}(1$). In particular, there are two other complex
structures $J$ and $K$ compatible with the metric $g$ and such that $I=J\cdot K=-K\cdot J$. In fact,
any linear combination $I_t\coloneqq aI+bJ+cK$ with $t=(a,b,c)\in S^2$ defines a complex
structure on $M$ compatible with the given K\"ahler metric $g$. We denote the corresponding
K\"ahler forms by $\omega_t=\omega_{I_t}=g(I_t(~),~)$, for example, $\omega=\omega_I=\omega_{(1,0,0)}$.
In fact, all the complex surfaces $(M,I_t)$ constructed in this way are K3 surfaces and, in particular, come with
a holomorphic volume form $\sigma_t\coloneqq\sigma_{I_t}$ for which we may assume $\omega_t^2=\sigma_t\bar\sigma_t$, see \cite[Sec.\ 3.F]{HKLR}.
Note also that $\sigma=\sigma_I=\omega_J+i\omega_K$ and 
that in fact all $\omega_t$ are contained in the $\RR$-linear span of $\omega_I,\omega_J$, and $\omega_K$.
Furthermore, the real and imaginary parts of $\sigma_t$  span the orthogonal complement of $\omega_t$
in $\langle\omega_I,\omega_J,\omega_K\rangle_\RR$:
$$\RR\cdot{\rm Re}(\sigma_t)\oplus^\perp\RR\cdot{\rm Im}(\sigma_t)\oplus^\perp \RR\cdot \omega_t=\langle\omega_I,\omega_J,\omega_K\rangle_\RR.$$

On the differentiable manifold $M\times S^2$ one defines the almost complex structure
${\mathbb I}$ at a point $(x,t)\in M\times S^2$
as $I_t\times I_{\PP^1}$, where $S^2$ is interpreted as the complex projective line $\PP^1$. A natural identification in this context will be explained below, cf.\ (\ref{eqn:PGRS}). It turns out that ${\mathbb I}$ 
is in fact integrable. The resulting complex manifold defined in this way 
together with the holomorphic projection to the second factor shall be denoted by
 $$\ks\coloneqq (M\times S^2,{\mathbb I})\to S^2\cong\PP^1.$$
The fibres $\ks_t$ of the projection are the K3 surfaces $(M,I_t)$.
We think of $(1,0,0)$, which corresponds to the original complex structure $I$,
as the north pole of $S^2$. Then the fibre over the south pole $(-1,0,0)$ is the K3 surface $(M,-I)$,
the complex conjugate of $S$. The \emph{equator} $\{(0,b,c)\mid b^2+c^2=1\}$ parametrizes
all complex structures $I_t$ for which $\omega$ is contained in the plane
$\langle{\rm Re}(\sigma_t),{\rm Im}(\sigma_t)\rangle_\RR$.

In Section \ref{sec:HodgeTwistor} we provide a description of the twistor construction in terms of the involved
Hodge structures, which in Section \ref{sec:K3CM} will then be translated back into results for $\ks\to \PP^1$.

\subsection{Twistor spaces associated with ample classes}
The twistor construction relies on the existence of the Ricci-flat metric $g$. The existence
of $g$ is guaranteed by \cite{Yau}, but it cannot be constructed in an explicit way and is thought of as
a transcendental structure. For example, if $S$ is embedded into $\PP^N$, so in particular $S$ is projective,
then the restriction of the Fubini--Study metric on $\PP^N$ to $S$ is never Ricci-flat \cite{Hullin}.
One may want to compare this result to \cite{Donaldson}, where it is shown that for appropriately chosen
projective embeddings $\varphi_n\colon
S\,\hookrightarrow \PP^{N_n}$ associated with the linear systems $|L^n|$, $n\to\infty$, the pull-backs
$(1/n)\varphi_n^*\omega_{\rm FS}$ of the Fubini--Study K\"ahler forms on $\PP^{N_n}$ approach
the Ricci-flat K\"ahler form representing the ample class ${\rm c}_1(L)$.

For an arbitrary K\"ahler class $\alpha\in H^{1,1}(S,\RR)$ there is very little one can
say about the various fibres $\ks_t$ of the associated twistor space. However, more
structure emerges when $\alpha$ is an ample class $\ell={\rm c}_1(L)$.

\begin{remark}
Note that the twistor space $\ks\to\PP^1$ associated with $S$ and a K\"ahler
class $\alpha=[\omega]$ can be viewed as the twistor space associated with
an arbitrary fibre $\ks_t$ endowed with the K\"ahler class $[\omega_t]$.
However, the property that $\alpha$ is an ample class $\ell$ is not preserved, i.e.\
$[\omega_t]$ will rarely be integral or rational again.
\end{remark}

\subsection{Hyperk\"ahler manifolds}\label{sec:HK} In the case of higher-dimensional hyperk\"ahler manifolds,
i.e.\ simply-connected, compact K\"ahler manifolds $X$ for which $H^0(X,\Omega_X^2)$ is spanned by an
everywhere non-degenerate form, the condition $\omega^2=\sigma\bar\sigma$, up to constant scaling,
has to be replaced by $\omega^{2n}=(\sigma\bar\sigma)^n$. Here, $2n$ is the complex dimension of $X$.
Again, the situation is controlled by the Hodge structure of weight two $H^2(X,\ZZ)$ which
is endowed with the Beauville--Bogomolov form. Theorem \ref{thm:main} remains valid in
higher dimensions, for the result is ultimately deduced from purely Hodge theoretic arguments
in Section \ref{sec:HodgeTwistor} and those apply to the transcendental part $T(S)\subset H^2(S,\QQ)$
of a projective K3 surface $S$ as well as to the transcendental part $T(X)\subset H^2(X,\QQ)$ of
a projective hyperk\"ahler manifold $X$.

%{\bf Acknowledgments:} TBC

%%%%%%%%%%%%%%%%
\section{Hodge structures of K3 type: Periods and endomorphisms}\label{sec:PeriodsHodge}

In the following $T$ will denote a rational Hodge structure of \emph{K3 type}. Concretely, this means that $T$ is a $\QQ$-vector space
of finite dimension $r$ endowed with a symmetric bilinear form $(~.~)$ of signature $(2,r-2)$ or
$(3,r-3)$ and a decomposition
\begin{equation}\label{eqn:HS}T_\CC\coloneqq T\otimes_\QQ\CC=T^{2,0}\oplus T^{1,1}\oplus T^{0,2}\end{equation} such that
with respect to the $\CC$-linear extension of $(~.~)$ the following conditions are satisfied:
\smallskip

\noindent
(i)   The subspaces $T^{1,1}$ and $T^{2,0}\oplus T^{0,2}$ are orthogonal.
\smallskip

\noindent
(ii) $(~.~)$ is positive definite on $P_T\coloneqq(T^{2,0}\oplus T^{0,2})\cap T_\RR$ and $T^{2,0},T^{0,2}\subset T_\CC$ are isotropic.
\smallskip

\noindent
(iii) Complex conjugation on $T_\CC$ preserves $T^{1,1}$ and exchanges $T^{2,0}$ and $T^{0,2}$, i.e.\ $\overline{T^{2,0}}=T^{0,2}$.
\smallskip

\noindent
(iv) $\dim_\CC T^{2,0}=1$.

%\begin{itemize}
%\item[(i)]   The subspaces $T^{1,1}$ and $T^{2,0}\oplus T^{0,2}$ are orthogonal.
%\item[(ii)] $(~.~)$ is positive definite on $P_T\coloneqq(T^{2,0}\oplus T^{0,2})\cap T_\RR$ %and $T^{2,0},T^{0,2}\subset T_\CC$ are isotropic.
%\item[(iii)] Complex conjugation on $T_\CC$ preserves $T^{1,1}$ and exchanges $T^{2,0}$ and $T^{0,2}$, i.e.\ $\overline{T^{2,0}}=T^{0,2}$.
%\item[(iv)] $\dim_\CC T^{2,0}=1$.
%\end{itemize}
\medskip

A generator of $T^{2,0}$ will usually be called $\sigma$, i.e.\ $T^{2,0}=\CC\cdot\sigma$. Note that giving a decomposition
(\ref{eqn:HS}) satisfying (i)-(iv) is equivalent to giving $\sigma\in T_\CC$ with $(\sigma.\sigma)=0$ and $(\sigma.\bar\sigma)>0$.
The two classes ${\rm Re}(\sigma)$ and ${\rm Im}(\sigma)$, which span the positive real plane $$P_T=\RR\cdot{\rm Re}(\sigma)\oplus\RR\cdot{\rm Im}(\sigma),$$
are orthogonal to each other $({\rm Re}(\sigma).{\rm Im}(\sigma))=0$ and of the same norm $({\rm Re}(\sigma))^2=({\rm Im}(\sigma))^2$.
The plane $P_T$ will be considered with its natural orientation given by its base ${\rm Re}(\sigma),{\rm Im}(\sigma)$.

We recall that the Hodge structure $T$ is called \emph{irreducible} if there is no proper subvector space $T'\subset T$ with $T^{2,0}\subset T'_\CC$. This implies that the intersection $T\cap T^{1,1}$ in $T_\CC$ is trivial.
If the signature of $T$ is $(2,r-2)$, the converse holds as well: If $T\cap T^{1,1}=0$, then $T$ is irreducible.

%Alternatively, irreducible Hodge structures can be described by the condition that the intersection $T\cap T^{1,1}$ in $T_\CC$ is trivial.

\begin{remark}
Note that any $\QQ$-linear subspace $T'\subset T$ with $T^{2,0}\subset T'_\CC$ is a sub-Hodge structure, i.e.\ the inclusion
$(T'_\CC\cap T^{2,0})\oplus (T'_\CC\cap T^{0,2})\oplus (T'_\CC\cap T^{1,1})\subset T'_\CC$ is an equality.
Indeed, in this case $T'_\CC\cap T^{2,0}=T^{2,0}$ and, applying complex conjugation,  one then also has $T'_\CC\cap T^{0,2}=T^{0,2}$.
This shows that for any $\gamma\in T'_\CC\subset T_\CC$ the classes $\gamma^{2,0},\gamma^{0,2}$, and $\gamma^{1,1}=\gamma-\gamma^{2,0}-\gamma^{0,2}$ are contained in $T'_\CC$. Also observe that for a $\QQ$-linear subspace $T'\subset T$ the condition
$T^{2,0}\subset T'_\CC$ is equivalent to $T^{0,2}\subset T'_\CC$.
\end{remark}

\begin{remark}
For the very general Hodge structure the positive plane $P_T\subset T_\RR$ does not contain non-trivial rational classes, i.e.\ $P_T\cap T=0$. More precisely, the Hodge structures (\ref{eqn:HS}) are parametrized by a period domain of complex dimension $r-2$ and the set of those for which  $P_T\cap T=0$ is the complement of a countable union of proper
closed subsets. 
In the non generic situation, one distinguishes two cases corresponding to the $\QQ$-vector space
$P_T\cap T$ being of dimension one or two:
\begin{enumerate}
\item[(i)] The real plane $P_T$ is defined over $\QQ$, i.e.\ $\dim_\QQ (P_T\cap T)=2$ or, equivalently, $( P_T\cap T)\otimes_\QQ\RR\cong P_T$. For irreducible $T$, the condition is equivalent to $\dim_\QQ T=2$, i.e.\ $P_T =T_\RR$.
\item[(ii)] The real plane $P_T$ contains exactly one rational line, i.e.\ $\dim_\QQ (P_T\cap T)=1$.
\end{enumerate}
Geometrically, (i) corresponds to K3 surfaces (or hyperk\"ahler manifolds) of maximal Picard number. The second
case (ii) does occur, but it is not expected to admit an algebro-geometric interpretation as Hodge structures of this
type are parametrized by a countable union of real Lagrangians \cite{HuyAnn}.
\end{remark}

\begin{lem}\label{lem:irredHS1}
For an irreducible Hodge structure $T$ of K3 type, the orthogonal projection defines an injection
$$T\,\hookrightarrow P_T.$$
In other words, if $T$ is irreducible, $0\ne \sigma\in T^{2,0}$, and $0\ne\gamma\in T$, then $(\sigma.\gamma)\ne0$.
\end{lem}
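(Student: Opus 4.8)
The plan is to prove the contrapositive: suppose there exists $0\ne\gamma\in T$ with $(\sigma.\gamma)=0$, and produce a proper sub-Hodge structure containing $T^{2,0}$, contradicting irreducibility. First I would set $W\coloneqq\{\gamma\in T\mid (\sigma.\gamma)=0\}$, which is a $\QQ$-linear subspace of $T$ since $\sigma\in T_\CC$ and the pairing is defined over $\QQ$ in the sense that $W$ is the kernel of the $\QQ$-linear map $T\to\CC$, $\gamma\mapsto(\sigma.\gamma)$; more precisely, writing $\sigma={\rm Re}(\sigma)+i\,{\rm Im}(\sigma)$, the condition $(\sigma.\gamma)=0$ is equivalent to the two rational conditions $({\rm Re}(\sigma).\gamma)=0$ and $({\rm Im}(\sigma).\gamma)=0$, so $W=P_T^\perp\cap T$ and is visibly rational. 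If $W\ne 0$, then since $W\ne T$ would follow from... actually I need to be careful: a priori $W$ could equal $T$, which would mean $P_T\perp T$, forcing $\sigma=\bar\sigma=0$ in the quotient — impossible since $(\sigma.\bar\sigma)>0$ and $T_\RR$ spans $T_\CC$. So $W\subsetneq T$ is a proper subspace.

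Next I would check that $T^{2,0}\subset W_\CC$, i.e.\ that $\sigma$ lies in the complexification of $W$. This is exactly the isotropy condition (ii): $(\sigma.\sigma)=0$ means $\sigma\in W_\CC$ after noting that $W_\CC=\{\delta\in T_\CC\mid(\sigma.\delta)=0\}$, and $(\sigma.\sigma)=0$ together with $(\sigma.\bar\sigma)\ne 0$... wait, $(\sigma.\bar\sigma)>0$, so $\bar\sigma\notin W_\CC$, but $\sigma$ itself satisfies $(\sigma.\sigma)=0$, hence $\sigma\in W_\CC$. Therefore $W$ is a proper $\QQ$-linear subspace of $T$ containing $T^{2,0}$ in its complexification, which contradicts the definition of irreducibility. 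This establishes the lemma.

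The only subtle point — and the one I would present most carefully — is the rationality of $W$ and the verification that $W$ is proper. Rationality follows because $P_T$ is a real plane and $W=T\cap P_T^\perp$ where $P_T^\perp$ is a real subspace defined over $\QQ$ only if $P_T$ is; but in fact $W$ is directly the kernel of $\gamma\mapsto(\sigma.\gamma)$ viewed as a pair of $\QQ$-linear functionals on $T$ (real and imaginary parts in a $\QQ$-basis, since the entries of $\sigma$ in a $\QQ$-basis of $T$, paired against rational vectors via the rational form $(~.~)$, give a well-defined element of $\CC$ whose vanishing is two linear conditions with coefficients in $\RR$ — but these coefficients need not be rational). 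So $W$ is merely an $\RR$-defined condition on the rational vector space $T$. That still makes $W$ a $\QQ$-subspace: $W=\ker$ of a linear map $T\to\CC$, and the kernel of any $\QQ$-linear map is a $\QQ$-subspace regardless of the target. I expect the main (minor) obstacle is just organizing this cleanly so the reader sees that irreducibility is being applied to a genuine $\QQ$-subspace; once that is in place the proof is a one-line consequence of the isotropy of $\sigma$.

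Alternatively, and perhaps more transparently, I would phrase it via the orthogonal projection $\pi\colon T_\RR\to P_T$: the claim $(\sigma.\gamma)\ne 0$ for all $0\ne\gamma\in T$ is equivalent to $\ker(\pi|_T)=0$, since $\pi(\gamma)=0$ iff $\gamma\perp{\rm Re}(\sigma)$ and $\gamma\perp{\rm Im}(\sigma)$ iff $(\sigma.\gamma)=0$ (using $(\sigma.\bar\sigma)\ne0$ to see the two real conditions combine into the single complex one). Then $\ker(\pi|_T)$ is a $\QQ$-subspace whose complexification contains $T^{2,0}$ by the isotropy $(\sigma.\sigma)=0$, so irreducibility forces it to be zero.
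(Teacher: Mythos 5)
The overall strategy (contradict irreducibility by exhibiting a proper rational subspace whose complexification contains $T^{2,0}$) is the right one and is the paper's, but you apply it to the wrong subspace, and the crucial step fails. You set $W\coloneqq\ker\bigl(\gamma\mapsto(\sigma.\gamma)\bigr)=P_T^\perp\cap T$ and claim $T^{2,0}\subset W_\CC$, justified by the identity $W_\CC=\{\delta\in T_\CC\mid(\sigma.\delta)=0\}$. That identity is false: $W_\CC=W\otimes_\QQ\CC$ is defined over $\QQ$, hence stable under complex conjugation, whereas $\sigma^\perp\subset T_\CC$ is not (its conjugate is $\bar\sigma^\perp$); moreover $\dim_\CC W_\CC=\dim_\QQ W\leq r-2$ while $\sigma^\perp$ has dimension $r-1$. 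In fact, since every $\gamma\in W$ is rational, hence real, $(\sigma.\gamma)=0$ forces $(\bar\sigma.\gamma)=\overline{(\sigma.\gamma)}=0$ as well, so $W_\CC\subset\sigma^\perp\cap\bar\sigma^\perp$, and $(\sigma.\bar\sigma)>0$ then gives $\sigma\notin W_\CC$. So $T^{2,0}$ is never contained in $W_\CC$ and irreducibility cannot be applied to $W$; indeed $W=T\cap T^{1,1}$ is pure of type $(1,1)$, the opposite of containing $T^{2,0}$. The same error recurs verbatim in your final paragraph.

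The fix is exactly the paper's argument: take a single $0\ne\gamma\in W$ and pass to its orthogonal complement $T'\coloneqq\gamma^\perp\subset T$ rather than to $W$ itself. The one condition $(\sigma.\gamma)=0$, equivalently $P_T\subset\gamma^\perp_\RR$, says precisely that $\sigma\in T'_\CC$, i.e.\ $T^{2,0}\subset T'_\CC$, and $T'$ is a genuine $\QQ$-subspace. Irreducibility then forces $T'=T$, i.e.\ $\gamma\perp T$, whence $\gamma=0$ by non-degeneracy of $(~.~)$. Your preliminary observations (that $W$ is a $\QQ$-subspace because it is the kernel of a $\QQ$-linear map into $\CC$, and that $W\ne T$) are correct, but they are not where the content of the lemma lies.
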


\begin{proof}
For any class $ \gamma\in T$ in the kernel of the orthogonal projection
$T\to P_T$,  one has $P_T\subset\gamma^\perp_\RR$. Hence,
$0\ne T'\coloneqq\gamma^\perp\subset T$ is a sub-Hodge structure. Irreducibility of $T$ then implies $T'=T$ and, therefore,
$\gamma=0$.
\end{proof}
%%%%%%%%%%%%%%
\subsection{The period field $k_T$}\label{sec:ks}
The \emph{period} of the Hodge structure $T$ is the point $$x_T\coloneqq [T^{2,0}]\in\PP(T_\CC)=\PP(T)\times_\QQ\CC.$$
We will denote
its residue field by $$k_T\coloneqq k(x_T)$$ and call it the \emph{period field} of the Hodge structure $T$. 

To make this more concrete, fix a basis
$\gamma_1,\ldots,\gamma_r$ of the $\QQ$-vector space $T$ and consider the induced isomorphism
$T\congpf\QQ^r$, $\gamma\mapsto ((\gamma.\gamma_i))$. This is the composition of $T\congpf T^*$, $\gamma\mapsto (\gamma.~)$,
or, equivalently, $\gamma\mapsto \sum (\gamma.\gamma_i)\,\gamma_i^*$, and the
natural isomorphism $T^*\congpf\QQ^r$ given by the dual basis $(\gamma_i^*)$.
The $\CC$-linear extension $T_\CC\congpf \CC^r$ maps
a generator $\sigma$ of $T^{2,0}$ to $(x_1\coloneqq(\sigma.\gamma_1),\ldots,x_r\coloneqq(\sigma.\gamma_r))$ and if $x_1\ne0$  then
$$k_T=\QQ(x_2/x_1,\ldots, x_r/x_1)\subset\CC.$$

Lemma \ref{lem:irredHS1} then yields the following consequence for irreducible Hodge structures.

\begin{cor}\label{cor:irredHS}
Let $T$ be an irreducible Hodge structure of K3 type with a fixed basis $(\gamma_i)$ and $0\ne\sigma\in T^{2,0}$.
\begin{itemize}
%\item[{\rm (i)}] Then $(\sigma.\gamma)\ne0$ for all $0\ne \gamma\in T$.
\item[{\rm (i)}]The coordinates $x_i\coloneqq (\sigma.\gamma_i)\in \CC$ are linearly independent
over $\QQ$.  In particular, $x_i\ne0$ for all $i$.
\item[{\rm (ii)}] The affine coordinates $x_i/x_1$ are linearly independent over $\QQ$, i.e.\ $\bigoplus_{i=1}^r\QQ\cdot(x_i/x_1)\,\hookrightarrow k_T$.
\end{itemize}
In particular, $[k_T:\QQ]\geq \dim_\QQ T$.\qed
\end{cor}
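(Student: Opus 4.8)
The plan is to prove Corollary~\ref{cor:irredHS} as a direct consequence of Lemma~\ref{lem:irredHS1}, by reinterpreting the $\QQ$-linear (in)dependence of the coordinates $x_i=(\sigma.\gamma_i)$ in terms of the orthogonal projection $T\hookrightarrow P_T$.

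\medskip

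For part~(i), suppose $\sum_{i=1}^r a_i x_i = 0$ with $a_i\in\QQ$, not all zero. Set $\gamma\coloneqq\sum a_i\gamma_i\in T$, so $\gamma\neq 0$ since the $\gamma_i$ form a basis. By $\CC$-bilinearity, $(\sigma.\gamma)=\sum a_i(\sigma.\gamma_i)=\sum a_i x_i=0$. But Lemma~\ref{lem:irredHS1} asserts precisely that for irreducible $T$ and $0\neq\gamma\in T$ one has $(\sigma.\gamma)\neq 0$ --- contradiction. Hence the $x_i$ are $\QQ$-linearly independent; in particular each $x_i\neq 0$ (a single nonzero rational multiple would otherwise witness dependence), which also justifies the formula $k_T=\QQ(x_2/x_1,\dots,x_r/x_1)$ used just before.

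\medskip

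For part~(ii), we must pass from independence of the $x_i$ to independence of the affine coordinates $y_i\coloneqq x_i/x_1$ (note $y_1=1$). Suppose $\sum_{i=1}^r b_i y_i=0$ with $b_i\in\QQ$. Multiplying through by $x_1\neq 0$ gives $\sum_{i=1}^r b_i x_i=0$, and part~(i) forces $b_i=0$ for all $i$. Thus the $\QQ$-linear map $\QQ^{\oplus r}\to k_T$ sending the $i$-th basis vector to $y_i$ is injective, i.e.\ $\bigoplus_{i=1}^r\QQ\cdot(x_i/x_1)\hookrightarrow k_T$. The final inequality $[k_T:\QQ]\geq\dim_\QQ T=r$ is then immediate: $k_T$ contains the $r$-dimensional $\QQ$-subspace just exhibited, so its dimension as a $\QQ$-vector space --- which is $[k_T:\QQ]$ --- is at least $r$.

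\medskip

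There is essentially no obstacle here: the entire content is packaged in Lemma~\ref{lem:irredHS1}, and the corollary is a routine translation between the language of bilinear pairings and that of coordinates. The only mild subtlety worth stating carefully is that $x_1\neq 0$ must be established (from part~(i), or directly from Lemma~\ref{lem:irredHS1} applied to $\gamma=\gamma_1$) before one is allowed to divide by it and to speak of the affine coordinates $x_i/x_1$ and the field $k_T$ at all; once that is noted the argument is immediate.
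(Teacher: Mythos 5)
Your proposal is correct and follows exactly the route the paper intends: the corollary is stated as an immediate consequence of Lemma \ref{lem:irredHS1} (which is why the paper gives no written proof beyond the \qed), and your argument simply spells out the routine translation --- a rational dependence $\sum a_i x_i=0$ would produce $0\ne\gamma=\sum a_i\gamma_i$ with $(\sigma.\gamma)=0$, contradicting the lemma, and part (ii) follows by clearing the denominator $x_1\ne0$. Nothing is missing.
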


\subsection{The endomorphism field $K_T$}\label{sec:EndoT} From now on until the end of Section \ref{sec:PeriodsHodge} and whenever $K_T$ is used, the bilinear form $(~.~)$ on $T$ is assumed to have
signature $(2,r-2)$. Moreover, we shall assume that the Hodge structure $T$ is irreducible.

Consider endomorphisms $\varphi\colon T\to T$ of the Hodge structure $T$, i.e.\  $\QQ$-linear maps $\varphi$ whose $\CC$-linear extensions
$\varphi_\CC$ (or, simply, $\varphi$) satisfy $\varphi_\CC(T^{p,q})\subset T^{p,q}$. They form an algebra
$$K_T\coloneqq {\rm End}_{\rm Hdg}(T).$$
As $T$ is irreducible, any non-zero $\varphi$ is in fact an isomorphism, for either its kernel or its image defines
a subspace $T'\subset T$ with $T^{2,0}\subset T'_\CC$. Hence, $K_T$ is a division algebra. 
For the same reason, $\varphi={\rm id}$ if and only if
$\varphi_\CC|_{T^{2,0}}={\rm id}$. Moreover, the image $\varphi_\CC(\sigma)$ of a generator $\sigma\in T^{2,0}$
is always a scalar multiple of $\sigma$, and mapping $\varphi$ to the scalar factor therefore defines an injection
$$K_T\,\hookrightarrow \CC.$$ We will denote the image of $\varphi\in K_T$ under this morphism again by $\varphi$, i.e.\
$\varphi_\CC(\sigma)=\varphi\cdot \sigma$.  This immediately shows that for an irreducible Hodge structure
$T$ of K3 type its endomorphism algebra $K_T$ is a field, an observation originally due to Zarhin \cite{Zarhin}.

\begin{remark}
As subfields of $\CC$, the endomorphism field $K_T$ and the period field $k_T$  of an irreducible Hodge structure $T$
are contained in each other:
\begin{equation}\label{eqn:Ksubsetk}
K_T\subset k_T.
\end{equation}
Indeed, for $\varphi\in K_T$ with $\varphi(\sigma)=\varphi\cdot\sigma$, one has $\varphi\cdot(\sigma.\gamma_1)=(\varphi(\sigma).\gamma_1)=(\sigma.\varphi'(\gamma_1))$, where the adjoint $\varphi'$ maps $\gamma_1$ to a $\QQ$-linear combination of 
the $\gamma_i$.
Dividing by $(\sigma.\gamma_1)$ yields (\ref{eqn:Ksubsetk}).

Note that $K_T$ is always a number field (of degree at most $\dim_\QQ T$), whereas  for the
very general Hodge structure on $T$ the period field $k_T$ is of transcendence degree $r-2$. Also note that frequently $K_T\subset\RR$, whereas this is never the case for $k_T$, because
it would contradict the two conditions $(\sigma.\sigma)=0$ and $(\sigma.\bar\sigma)>0$.
\end{remark}

As shown by Zarhin \cite{Zarhin}, $K_T$ is either a totally real field 
or a CM field. Recall that a number field $K$ is totally real if the image of any embedding $K\,\hookrightarrow\CC$ is contained in
$\RR\subset \CC$. It is a CM field if it is a quadratic extension $K=K_0(\sqrt \lambda)$ of a totally real
field $K_0$ such that
$\lambda\in \RR_{<0}$ for any embedding $K_0\,\hookrightarrow \CC$.

\begin{remark}\label{rem:Hodgeadjoint}
(i) Complex conjugation in $\CC$ corresponds
to taking adjoints with respect to $(~.~)$, i.e.\ $(\psi(\gamma_1).\gamma_2)=(\gamma_1.\varphi(\gamma_2))$ for all $\gamma_1,\gamma_2\in T$ if and only if
$\bar\varphi= \psi$ for the images of $\varphi,\psi$ under one, or equivalently any, embedding $K_T\,\hookrightarrow \CC$, cf.\ \cite{Zarhin} or \cite[Ch.\ 3]{HuyK3}. Note that with $\varphi$ also its adjoint
$\psi$ is a Hodge endomorphism and that
$\varphi$ is an isometry if and only if its image in $\CC$  has norm one. In other words, $(\varphi(\gamma_1).\varphi(\gamma_2))=(\gamma_1.\gamma_2)$
for all $\gamma_1,\gamma_2\in T$ if and only if
 $\bar\varphi\cdot\varphi=1$.

(ii) We will need a slightly stronger variant of the last fact. Assume $\varphi,\psi\colon T\to T$ are just $\QQ$-linear maps such that
$\varphi(\sigma)=\lambda\cdot\sigma$ and $\psi(\sigma)=\bar\lambda\cdot\sigma$ for some $\lambda\in\CC$. Then $\psi$
is the adjoint $\varphi'$ of $\varphi$, i.e.\
$(\psi(\gamma_1).\gamma_2)=(\gamma_1.\varphi(\gamma_2))$ for all $\gamma_1,\gamma_2\in T$. To prove this, let $T'\subset T$
be the kernel of $\psi-\varphi'$. Then $T^{2,0}\subset T'_\CC$ if $((\psi-\varphi')(\sigma).\gamma)=0$ for all $\gamma\in T$ or, equivalently,
$\bar\lambda\cdot (\sigma.\gamma)=(\psi(\sigma).\gamma)=(\sigma.\varphi(\gamma))$ for all $\gamma\in T$. However,
as $\varphi(\bar\sigma)=\bar\lambda\cdot\bar\sigma$, the subspace $T''\subset T$ of all such $\gamma$ 
satisfies $T^{0,2}\subset T''_\CC$. By the irreducibility of $T$, this yields $T''=T$ and, therefore, $T'=T$, i.e.\ $\psi=\varphi'$.

(iii) It may be interesting to study the possibly larger number field $L_T$ of all $\QQ$-linear endomorphisms  $\varphi$ of $T$ with just $\varphi(T^{2,0})\subset T^{2,0}$.
It is indeed a field, but it  may not be closed under complex conjugation as those morphisms do not necessarily preserve
$T^{1,1}$.
\end{remark}

\begin{definition}
An irreducible Hodge structure $T$ of K3 type has \emph{complex multiplication (CM)} if its endomorphism ring $K_T={\rm End}_{\rm Hdg}(T)$ is a CM field
and $\dim_{K_T}T=1$.
\end{definition}

Sometimes the less restrictive notion is used, where $K_T$ is required to be a CM field but one allows
$\dim_{K_T}T>1$.
Note that the condition $\dim_{K_T}T=1$ can equivalently be phrased as $[K_T:\QQ]=\dim_\QQ T$.

\begin{remark}
Any CM field $K$ admits a primitive element of norm one, i.e.\ $K=\QQ(\alpha)$ with $\bar\alpha\cdot\alpha=1$.
For $K=K_T$ 
this can be rephrased by saying that whenever $T$ has CM then there exists a Hodge isometry $\alpha\colon T\congpf T$, i.e.\
one in addition has $(\alpha(\gamma_1).\alpha(\gamma_2))=(\gamma_1.\gamma_2)$ for all $\gamma_1,\gamma_2\in T$, such that
every Hodge endomorphism $\varphi\colon T\to T$ can be written as $\varphi=\sum_{i=1}^{r}a_i\alpha^{i-1}$, $a_i\in \QQ$, cf.\ \cite[Thm.\ 3.3.7]{HuyK3} for an elementary proof and references.
The primitive element $\alpha$ has degree $r=\dim_\QQ T$ over $\QQ$. 
\end{remark}

\begin{remark}
In the totally real case, the only elements $\varphi\in K_T$  that correspond to Hodge isometries of $T$ are $\varphi=\pm1$.
Note also that in the totally real case, the analogue of the condition $\dim_{K_T}T=1$ is never realized,
see \cite[Lem.\ 3.2]{vGeemen}: If $K_T$ is a totally real field, then $\dim_{K_T}T\geq3$. Roughly,
if $\dim_{K_T}T=1$, then $K_T\otimes_\QQ\RR$ splits as $\bigoplus V_\varepsilon$,
where $\varepsilon$ runs through all embeddings $K_T\,\hookrightarrow \RR$ and hence
$\dim_\RR V_\varepsilon=1$. On the other hand, the real field $K_T$ acts on the plane
$P_T$ by multiplication, which yields the contradiction $\dim_\RR V_{\rm id}=2$. In the case of a totally
real field acting on a two-dimensional $T$, Zarhin's classification of 
the possible Mumford--Tate groups implies that there must exist an action of a further quadratic extension.
For example, any Hodge structure of K3 type $T$ with $\dim_\QQ T=2$ has complex multiplication,
a Hodge isometry can be written down explicitly, cf.\ \cite[Rem.\ 3.3.10]{HuyK3}.
\end{remark}

\subsection{Complex multiplication} It turns out that complex multiplication can be
read off not only from the endomorphism field
$K_T$ but also from its period field.

\begin{lem}\label{lem:FirstCMkK}
Assume $T$ is an irreducible Hodge structure of K3 type with complex multiplication.
% with $T^{2,0}=\CC\cdot\sigma$. 
\begin{itemize}
\item[{\rm (i)}] Then the endomorphism field $K_T$ and the period field $k_T$ coincide  (as subfields of $\CC$): $$K_T= k_T.$$
\item[{\rm (ii)}] For any basis $(\gamma_i)$ of $T$ and any  $0\ne\sigma\in T^{2,0}$ the coordinates $x_i\coloneqq(\sigma.\gamma_i)$ satisfy
$$K_T=k_T=\bigoplus_{i=1}^r\QQ\cdot (x_i/x_1).$$
\end{itemize}
\end{lem}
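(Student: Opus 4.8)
The plan is to establish (i) first, since (ii) follows by combining (i) with Corollary \ref{cor:irredHS}(ii). We already know from \eqref{eqn:Ksubsetk} that $K_T\subset k_T$, so the work lies in the reverse inclusion $k_T\subset K_T$. The CM hypothesis gives $[K_T:\QQ]=\dim_\QQ T=r$, while Corollary \ref{cor:irredHS} gives $[k_T:\QQ]\geq r$; thus it suffices to prove $[k_T:\QQ]\leq [K_T:\QQ]$, i.e.\ that $k_T$ is generated over $\QQ$ by at most $r$ quantities already lying in $K_T$ — and in fact, once we show $k_T\subseteq K_T$, the equality of degrees forces $k_T=K_T$.

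First I would exploit the structure of $K_T$ as a CM field acting with $\dim_{K_T}T=1$: fixing $0\neq\sigma\in T^{2,0}$ and the embedding $K_T\hookrightarrow\CC$, $\varphi\mapsto\varphi$ defined by $\varphi_\CC(\sigma)=\varphi\cdot\sigma$, the one-dimensionality means $T$ is a free rank-one $K_T$-module, so for any fixed nonzero $\gamma_1\in T$ the map $K_T\to T$, $\varphi\mapsto\varphi(\gamma_1)$, is a $\QQ$-linear isomorphism. Hence the chosen basis $(\gamma_i)$ can be written as $\gamma_i=\varphi_i(\gamma_1)$ for suitable $\varphi_i\in K_T$ (with $\varphi_1=\mathrm{id}$). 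Now compute the period coordinates: using the adjoint formalism of Remark \ref{rem:Hodgeadjoint}(i), if $\varphi_i'$ denotes the adjoint of $\varphi_i$ (which is again a Hodge endomorphism, corresponding to $\overline{\varphi_i}\in K_T$ under the embedding), then
$$x_i=(\sigma.\gamma_i)=(\sigma.\varphi_i(\gamma_1))=(\varphi_i'(\sigma).\gamma_1)=\overline{\varphi_i}\cdot(\sigma.\gamma_1)=\overline{\varphi_i}\cdot x_1.$$
Therefore $x_i/x_1=\overline{\varphi_i}\in K_T$ for every $i$, which immediately gives $k_T=\QQ(x_2/x_1,\ldots,x_r/x_1)\subseteq K_T$, completing (i). For (ii), Corollary \ref{cor:irredHS}(ii) says the $x_i/x_1$ are $\QQ$-linearly independent, so $\bigoplus_{i=1}^r\QQ\cdot(x_i/x_1)$ is an $r$-dimensional $\QQ$-subspace of $K_T=k_T$; since $[K_T:\QQ]=r$, this subspace is all of $K_T$.

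I expect the only genuine subtlety to be the identification $x_i/x_1=\overline{\varphi_i}$ rather than $\varphi_i$ — one must be careful about whether the embedding $K_T\hookrightarrow\CC$ sends $\varphi$ to the eigenvalue on $\sigma$ or on $\bar\sigma$, and correspondingly whether adjunction introduces a complex conjugation. But this is cosmetic: since $K_T$ is a CM field it is closed under complex conjugation (Remark \ref{rem:Hodgeadjoint}(i) shows adjunction is an involution on $K_T$ matching complex conjugation on the image in $\CC$), so $\overline{\varphi_i}\in K_T$ regardless, and the subfield generated is unaffected. No transcendence or Galois-theoretic input is needed; the argument is purely the interplay between the two embeddings $K_T\hookrightarrow\CC$ and $k_T\hookrightarrow\CC$ forced by $\dim_{K_T}T=1$.
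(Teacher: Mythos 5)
Your proof is correct and follows essentially the same route as the paper: both arguments exploit $\dim_{K_T}T=1$ to write a basis of $T$ as $\varphi_i(\gamma_1)$ with $\varphi_i\in K_T$, move the endomorphisms across the pairing to get $x_i/x_1\in K_T$, and then conclude (ii) from the $\QQ$-linear independence of the $x_i/x_1$ (Corollary \ref{cor:irredHS}) together with $[k_T:\QQ]=r$. The only (cosmetic) difference is that the paper works with the special basis $\gamma_i=\alpha^{1-i}(\gamma_1)$ built from a norm-one primitive element $\alpha$, so that the isometry property replaces your use of the adjoint formalism of Remark \ref{rem:Hodgeadjoint}, whereas your version handles an arbitrary basis directly; your conjugation bookkeeping $x_i/x_1=\overline{\varphi_i}$ is the correct one and harmless since $K_T$ is stable under conjugation.
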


\begin{proof}
Pick a primitive element of norm one, i.e.\ we write $K_T=\QQ(\alpha)$ with a
Hodge isometry $\alpha$. Applying
powers of $\alpha$ to a fixed $0\ne\gamma_1\in T$ yields a basis of $T$. More precisely, as $\deg(\alpha)=\dim_\QQ T$,
the elements $\gamma_i\coloneqq \alpha^{1-i}(\gamma_1)$, $i=1,\ldots ,r$,
form a basis of $T$. As $\alpha$ is an isometry, one has $(\sigma.\gamma_i)=(\sigma. \alpha^{1-i}(\gamma_1))=
(\alpha^{i-1}(\sigma).\gamma_1)=\alpha^{i-1}\cdot(\sigma.\gamma_1)$. Hence, $(\sigma.\gamma_i)/(\sigma.\gamma_1)=\alpha^{i-1}$
and, therefore, $k_T= K_T$. 

To see (ii), we use Corollary \ref{cor:irredHS}. We know that  $x_1\ne0$, hence the $x_i/x_1$ are well defined, and that the
$x_i/x_1$, $i=1,\ldots,r$ are linearly independent over $\QQ$. As $r=[K_T:\QQ]=[k_T:\QQ]$, this proves the second assertion.
\end{proof}

\begin{remark}\label{rem:refereeremark}
Following a suggestion of a referee, we recast the last lemma as follows:
Pick $\sigma\in T^{2,0}$ such that $(\sigma.x_1)=1$. Then
\begin{equation}\label{eqn:refeqn}
T\congpf \bigoplus \QQ\cdot x_i=\bigoplus \QQ\cdot(x_i/x_1),~\gamma\mapsto (\sigma.\gamma)
\end{equation}
is an isomorphism of $\QQ$-vector spaces, because for $\gamma=\sum a_i\gamma_i$, $a_i\in\QQ$,
one finds $(\sigma.\gamma)=\sum a_i x_i$. Furthermore, if $\varphi\in K_T=\End_{\rm Hdg}(T)$ corresponds to $\lambda\in\CC$, i.e. $\varphi(\sigma)=\lambda\cdot\sigma$, then multiplication with $\lambda$
preserves $\bigoplus\QQ\cdot x_i$ and under (\ref{eqn:refeqn}) corresponds to the transpose $\varphi'$.
\end{remark}

The following result is a partial converse. 
Using the above notation, we know that by Corollary \ref{cor:irredHS} 
the natural map $\bigoplus \QQ\cdot (x_i/x_1)\to k_T=\QQ(x_2/x_1,\ldots,x_r/x_1)$ is injective.

%We consider an irreducible
%Hodge structure $T$ of K3 type and pick a generator $\sigma\in T^{2,0}$ and a basis $(\gamma_i)$ of $T$.
%As before, we write $x_i\coloneqq (\sigma.\gamma_i)$. Then $k_T=\QQ(x_i/x_1)$ and the natural map $\bigoplus \QQ\cdot (x_i/x_1)\to k_T$ is injective by Corollary \ref{cor:irredHS}.

\begin{lem}\label{lem:converseCMperiod}
Let $T$ be an irreducible Hodge structure of K3 type with period field $k_T$. 
Assume $L\subset k_T\cap\RR$ is a subfield with $[L:\QQ]\geq (1/2)\dim_\QQ T$ and such that multiplication
with elements in $L$ preserves the subspace $\bigoplus\QQ\cdot (x_i/x_1)\subset k_T$. Then $T$ has complex multiplication.
\end{lem}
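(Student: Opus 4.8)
\medskip
\noindent The plan is to show that $L$ sits inside $K_T$ as a subfield, and then to deduce complex multiplication from a degree count together with Zarhin's dichotomy and the bound $\dim_{K_T}T\ge 3$ for totally real $K_T$. After rescaling $\sigma$---legitimate since $x_1\ne 0$ by Corollary~\ref{cor:irredHS}---we may assume $(\sigma.\gamma_1)=x_1=1$; then $\Phi\colon T\to\CC$, $\gamma\mapsto(\sigma.\gamma)$, is a $\QQ$-linear isomorphism onto $V:=\bigoplus_i\QQ\,x_i=\bigoplus_i\QQ\,(x_i/x_1)$, the subspace appearing in the hypothesis (cf.\ Remark~\ref{rem:refereeremark}).

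\medskip
\noindent \emph{Step 1: $L\subseteq K_T$.} Fix $\ell\in L$. Since multiplication by $\ell$ sends $V$ into itself, the map $\psi_\ell\colon T\to T$ determined by $\Phi(\psi_\ell(\gamma))=\ell\,\Phi(\gamma)$ is a well-defined $\QQ$-linear endomorphism with $(\sigma.\psi_\ell(\gamma))=\ell\,(\sigma.\gamma)$ for all $\gamma\in T$, and, as $(~.~)$ is nondegenerate on $T_\CC$, its adjoint $\varphi:=\psi_\ell'$ satisfies $\varphi(\sigma)=\ell\sigma$. The reality of $L$ now enters: because $\ell=\overline\ell$, the map $\varphi$ satisfies $\varphi(\sigma)=\ell\sigma$ and $\varphi(\sigma)=\overline\ell\sigma$ simultaneously, so Remark~\ref{rem:Hodgeadjoint}(ii), applied with $\psi=\varphi$, forces $\varphi'=\varphi$, i.e.\ $\varphi$ is self-adjoint. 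A self-adjoint $\QQ$-linear endomorphism preserving $T^{2,0}=\CC\sigma$ also preserves $T^{0,2}=\CC\overline\sigma$ (by $\QQ$-linearity) and, since $T^{1,1}$ is the orthogonal complement of $\CC\sigma\oplus\CC\overline\sigma$ (property (i) of a K3 type Hodge structure), it preserves $T^{1,1}$ too: for $w\in T^{1,1}$ one gets $(\varphi(w).\sigma)=(w.\varphi(\sigma))=\ell\,(w.\sigma)=0$, and similarly $(\varphi(w).\overline\sigma)=0$. Hence $\varphi\in\End_{\rm Hdg}(T)=K_T$, and under $K_T\hookrightarrow\CC$ it maps to $\ell$; as $\ell\in L$ was arbitrary, $L\subseteq K_T$.

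\medskip
\noindent \emph{Step 2: conclusion.} From $L\subseteq K_T$ we get $[K_T:\QQ]\ge[L:\QQ]\ge\tfrac12\dim_\QQ T$, hence $\dim_{K_T}T=\dim_\QQ T/[K_T:\QQ]\le 2$. By \cite[Lem.\ 3.2]{vGeemen} this rules out $K_T$ being totally real, so by Zarhin \cite{Zarhin} the field $K_T$ is CM. Since $L$ is a real subfield of $K_T$, it lies in the maximal totally real subfield $K_T^0=K_T\cap\RR$, whence
\[
\tfrac12\dim_\QQ T\le[L:\QQ]\le[K_T^0:\QQ]=\tfrac12[K_T:\QQ]\le\tfrac12\dim_\QQ T .
\]
Equality throughout gives $[K_T:\QQ]=\dim_\QQ T$, i.e.\ $\dim_{K_T}T=1$, so $T$ has complex multiplication.

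\medskip
\noindent The step I expect to be the main obstacle is the Hodge condition in Step~1: from $\ell V\subseteq V$ alone one only produces a $\QQ$-linear endomorphism preserving $T^{2,0}$, which in general does not preserve $T^{1,1}$ and hence is not a Hodge endomorphism; it is precisely the reality of $L$, via Remark~\ref{rem:Hodgeadjoint}(ii), that makes the transported map self-adjoint and so Hodge. The reality of $L$ is used once more, and just as essentially, in the closing degree count, to pass from ``$K_T$ is a CM field'' to the condition $\dim_{K_T}T=1$ required by the definition of complex multiplication.
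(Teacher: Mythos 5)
Your proof is correct and follows essentially the same route as the paper's: you transport multiplication by a real element of $L$ to a $\QQ$-linear endomorphism of $T$ via $\gamma\mapsto(\sigma.\gamma)$, use Remark \ref{rem:Hodgeadjoint}(ii) together with the reality of $\ell$ to see that the resulting (self-adjoint) map preserves $T^{1,1}$ and hence lies in $K_T$, and then conclude by the degree count combining van Geemen's bound $\dim_{K_T}T\geq 3$ in the totally real case with Zarhin's dichotomy. The only cosmetic difference is in the last step, where you run a chain of inequalities through $[K_T^0:\QQ]=\tfrac12[K_T:\QQ]$, while the paper observes directly that the real field $L$ must be a proper subfield of the CM field $K_T$, forcing $\dim_{K_T}T=1$.
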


\begin{proof}  In most of the argument we only use $L=\bar L$ and $L\subset k_T$. Only at the very
end, $L\subset\RR$ becomes important.

 %As we assume $[k_T:\QQ]=\dim_\QQ T$, it is in fact bijective.
%For simplicity, we assume $x_1=1$, which can be achieved by scaling $\sigma$.
We claim that every $\lambda\in k_T\cap\RR$ that preserves $\bigoplus\QQ\cdot(x_i/x_1)$
is contained in $K_T=\End_{\rm Hdg}(T)$.
Indeed, there exists a unique invertible matrix $(b(\lambda)_{ij})$, $b(\lambda)_{ij}\in\QQ$, with 
$\lambda \cdot x_i=\sum b(\lambda)_{ij}\, x_j$. Using the isomorphism $T\congpf \QQ^r$, $\gamma\mapsto ((\gamma.\gamma_i))$,
we interpret $(b(\lambda)_{ij})$ as a linear map $\varphi\colon T\to T$. Its $\CC$-linear extension satisfies
$$\begin{array}{rcl}
(\varphi(\sigma).~)&=&\sum_j x_j\left(\sum_{i\phantom{j}}\!\!b(\lambda)_{ij}\,\gamma_i^*\right)
=\sum_i\left(\sum_j b(\lambda)_{ij}\,x_j\right)\,\gamma_i^*\\
&=&\sum_i(\lambda\cdot x_i)\,\gamma_i^*=\lambda\cdot\sum_ix_i\,\gamma_i^*,
\end{array}$$
 i.e.\ $\varphi(\sigma)=\lambda\cdot \sigma$.  This is enough to conclude that $\varphi$ is an endomorphism of Hodge structures.
Indeed, if $\gamma\in T^{1,1}\cap T_\RR$, then $(\sigma.\varphi(\gamma))=(\varphi'(\sigma).\gamma)=\bar\lambda\cdot(\sigma.\gamma)=0$, as by (ii) in Remark \ref{rem:Hodgeadjoint}  the adjoint
$\varphi'$ is given by $(b(\bar\lambda)_{ij})$, where we use $\bar\lambda\in L$, and hence $\varphi(\gamma)\in T^{1,1}$.
Therefore, $\varphi\in K_T$ and $\varphi$ is mapped  to $\lambda$ under the natural inclusion $K_T\,\hookrightarrow \CC$, cf.\ Remark \ref{rem:refereeremark}.

Hence, $L\subset K_T$ as subfields of $\CC$ and, therefore,  $[K_T:\QQ]\geq (1/2)\dim_\QQ T$ or, equivalently, $\dim_{K_T}T\leq 2$.
 However, according to \cite[Lem.\ 3.2]{vGeemen}, if $K_T$ is a totally real field, then $\dim_{K_T}T\geq 3$. Hence,  $K_T$ is a CM field. 
 
 Eventually, now using $L\subset\RR$, the inclusion $L\subset K_T$ is in fact
 proper, and, therefore, $\dim_{K_T} T=1$.
\end{proof}

We summarize the discussion as follows.

\begin{prop}\label{prop:CMKk}
For an irreducible Hodge structure  $T$ of K3 type the following conditions are equivalent:
\begin{enumerate}
\item[{\rm (i)}] $T$ is of CM type, i.e.\ $K_T$ is a CM field and $[K_T:\QQ]=\dim_\QQ T$.
\item[{\rm (ii)}]  $k_T$ is a CM field and $[k_T:\QQ]=\dim_\QQ T$.
\item[{\rm (iii)}] $K_T=k_T$.
%\item[{\rm (iv)}] $[k_T\cap\RR:\QQ]=(1/2)\dim_\QQ T$.
\end{enumerate}
\end{prop}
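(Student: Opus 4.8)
The strategy is to prove the cycle of implications (i) $\Rightarrow$ (iii) $\Rightarrow$ (ii) $\Rightarrow$ (i), collecting the implications that are essentially already established in Lemmas \ref{lem:FirstCMkK} and \ref{lem:converseCMperiod}. The implication (i) $\Rightarrow$ (iii) is exactly part (i) of Lemma \ref{lem:FirstCMkK}: if $T$ has CM then $K_T = k_T$ as subfields of $\CC$. Note that under (i) we also have $[k_T:\QQ]=[K_T:\QQ]=\dim_\QQ T$ and $K_T$ a CM field, so in fact (i) immediately yields (ii) as well; thus the only real content left is to show that either (ii) or (iii) forces $T$ to have CM.

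\textbf{From (iii) to (i).} Assume $K_T = k_T$. By Corollary \ref{cor:irredHS}(ii) we always have $[k_T:\QQ]\geq \dim_\QQ T$, so the equality $K_T=k_T$ gives $[K_T:\QQ]\geq\dim_\QQ T$; since $K_T$ acts faithfully on the $\QQ$-vector space $T$ and is a field, $[K_T:\QQ]\leq\dim_\QQ T$, hence $[K_T:\QQ]=\dim_\QQ T$, i.e. $\dim_{K_T}T=1$. By \cite[Lem.\ 3.2]{vGeemen} a totally real $K_T$ would force $\dim_{K_T}T\geq 3$, so $K_T$ must be a CM field. This is precisely condition (i).

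\textbf{From (ii) to (i).} Assume $k_T$ is a CM field with $[k_T:\QQ]=\dim_\QQ T =: r$. Let $K_T^0 = k_T\cap\RR$ be the maximal totally real subfield of $k_T$; it has degree $r/2$ over $\QQ$. I want to apply Lemma \ref{lem:converseCMperiod} with $L = K_T^0$. The hypotheses $[L:\QQ]=r/2\geq(1/2)\dim_\QQ T$ and $L\subset k_T\cap\RR$ are immediate, so the one thing to check is that multiplication by elements of $L$ preserves the subspace $V := \bigoplus_{i=1}^r \QQ\cdot(x_i/x_1)\subset k_T$. Here the point is dimension counting inside the field $k_T$: by Corollary \ref{cor:irredHS}(ii) the $x_i/x_1$ are $\QQ$-linearly independent, so $\dim_\QQ V = r = [k_T:\QQ]$, which forces $V = k_T$ as $\QQ$-vector spaces. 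A subspace equal to the whole field is trivially stable under multiplication by any element of $k_T$, in particular by any $\lambda\in L$. Lemma \ref{lem:converseCMperiod} then yields that $T$ has complex multiplication, which is (i). Since (i) $\Rightarrow$ (ii) and (i) $\Rightarrow$ (iii) were noted above, all three conditions are equivalent. \qqed

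\textbf{Main obstacle.} There is essentially no obstacle: every nontrivial input has already been isolated in the preceding lemmas, and the proof is a bookkeeping exercise in combining them. The only place requiring a moment's care is the observation that $V=\bigoplus\QQ\cdot(x_i/x_1)$ equals all of $k_T$ precisely when $[k_T:\QQ]=\dim_\QQ T$ — this is what makes the multiplicativity hypothesis of Lemma \ref{lem:converseCMperiod} automatic in case (ii), and it is the one step that genuinely uses the degree equality $[k_T:\QQ]=\dim_\QQ T$ rather than just "$k_T$ is CM". (Indeed without the degree condition $k_T$ could be a large CM field not of the form needed, so the hypothesis cannot be dropped.)
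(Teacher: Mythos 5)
Your proposal is correct and follows essentially the same route as the paper: (i)$\Rightarrow$(ii),(iii) via Lemma \ref{lem:FirstCMkK}, (ii)$\Rightarrow$(i) by observing that the dimension count forces $\bigoplus\QQ\cdot(x_i/x_1)=k_T$ so that Lemma \ref{lem:converseCMperiod} applies with $L=k_T\cap\RR$, and (iii)$\Rightarrow$(i) from the degree inequalities $[K_T:\QQ]\leq\dim_\QQ T\leq[k_T:\QQ]$ together with van Geemen's exclusion of the totally real case. The only cosmetic difference is that you spell out the inequality $[K_T:\QQ]\leq\dim_\QQ T$ via the $K_T$-vector space structure on $T$, which the paper leaves implicit.
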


\begin{proof}
Let us assume (ii). Then, for dimension reasons,  $\bigoplus\QQ\cdot (x_i/x_1)\,\hookrightarrow k_T$ is a bijection.
In particular, the space  is preserved by multiplication with elements in the maximal totally real subfield 
$L\coloneqq k_T\cap \RR$. Now apply Lemma \ref{lem:converseCMperiod}, using $[k_T\cap\RR:\QQ]=(1/2)\dim_\QQ T$ for the CM field $k_T$.
Hence, (ii) implies (i) and  (iii). 
As by virtue of Lemma \ref{lem:FirstCMkK}, (i) implies (ii) and (iii), it remains to prove that (iii) implies (i) or (ii). However, (iii) together with  $[K_T:\QQ]\leq r\leq [k_T:\QQ]$ yields $\dim_{K_T}T=1$ and, as above, this proves that $K_T$ is a CM field.
\end{proof}

%%%%%%%%%%%%%%%%%%%%%%%%%%%%%%%%
\section{Hodge theory of the twistor space}\label{sec:HodgeTwistor}

As before, $T$ denotes an irreducible Hodge structure of K3 type with signature $(2,r-2)$.
Its positive real plane is denoted $P_T=\langle{\rm Re}(\sigma),{\rm Im}(\sigma)\rangle_\RR\subset T_\RR$, where $0\ne \sigma\in T^{2,0}$.
Associated with $T$ and an abstract class $\ell$ of positive square, there exists a sphere of related Hodge structures.

\subsection{The twistor construction}\label{sec:TwistorConstrT}
For any $d\in \ZZ_{>0}$ we extend $T$ to the Hodge structure of K3 type
$T\oplus\QQ\cdot\ell$ by declaring $\ell$ to be of type $(1,1)$, 
orthogonal to $T$, and to satisfy $(\ell.\ell)=d$. Note that then $P_T\oplus\RR\cdot\ell\subset T_\RR\oplus\RR\cdot\ell$ is a positive three-space.

The associated \emph{twistor base} $\PP^1_\ell\subset\PP(T^{2,0}\oplus T^{0,2}\oplus \CC\cdot\ell)=\PP(P_{T\CC}\oplus\CC\cdot\ell)\subset\PP(T_\CC\oplus\CC\cdot\ell)$ is the conic
$$\PP^1_\ell\coloneqq\{z\mid (z.z)=0\}.$$
Any $z\in \PP^1_\ell$ defines a Hodge structure of K3 type on $T\oplus\QQ\cdot\ell$. Its $(2,0)$-part is the line corresponding to $z$,
 the complex conjugate of the line is the $(0,2)$-part, and the $(1,1)$-part is given as the orthogonal complement of the former two.

Mapping $z\in\PP^1_\ell$ to the oriented, positive real plane $P_z\coloneqq\langle{\rm Re}(z),{\rm Im}(z)\rangle_\RR\subset P_T\oplus\RR\cdot\ell$
yields the usual identification $\PP^1_\ell\cong{\rm Gr}^{\rm po}(P_T\oplus\RR\cdot\ell)$ with the Grassmannian of oriented, positive planes. 
The complex conjugate $\bar z$ corresponds to the same plane with the reversed orientation $P_{\bar z}=\bar P_z\coloneqq\langle{\rm Im}(z),{\rm Re}(z)\rangle_\RR$. We will consider the period point of $T$ and its complex conjugate  $x\coloneqq x_T,\bar x=\bar x_T\in\PP(T_\CC)$
as points in $\PP^1_\ell$ via the natural inclusion $\PP(T_\CC)\subset\PP(T_\CC\oplus\CC\cdot\ell)$.

Thinking of $P_z$ with its orientation being given as the orthogonal complement of a generator $\alpha_z$ of the line $P_z^\perp\subset P_T\oplus\RR\cdot\ell$ provides a natural identification
\begin{equation}\label{eqn:PGRS}
\PP^1_\ell\cong {\rm Gr}_2^{\rm po}(P_T\oplus\RR\cdot\ell)\cong S^2_\ell\coloneqq\{ \alpha \mid (\alpha.\alpha)=1\}\subset P_T\oplus\RR\cdot\ell.
\end{equation}
With this identification, $x$ and $\bar x$ correspond to (the normalization of) $\ell$ and $-\ell$. We think of them 
 as the north and south pole of $S_\ell^2$. 
The \emph{equator} is the circle
$$S^1_\ell\coloneqq\{z\in \PP^1_\ell\mid\ell\in P_z\}\cong\{\alpha\mid(\alpha.\ell)=0\}\subset S_\ell^2.$$

 If for $z\in\PP(P_{T\CC}\oplus\CC\cdot\ell)$ we write $z=[a\sigma+b\bar\sigma+c\ell]$,  $a,b,c\in\CC$,
then $z\in \PP^1_\ell$ if and only if
\begin{equation}\label{eqn:quad}
2ab(\sigma.\bar\sigma)+c^2 d=0.
\end{equation} The only points with $c=0$ are the north and the south pole. For all other points,  $c\ne0$ and,
after scaling, we may assume $c=1$.% and (\ref{eqn:quad}) reads $2ab(\sigma.\bar\sigma)=-d$.
%Note that the period point $x_T\in\PP(T_\CC)\subset\PP(T_\CC\oplus\CC\cdot\ell)$, which is contained in 
%the twistor base $\PP_\ell\subset \PP(T_\CC\oplus\CC\cdot\ell)$, and its conjugate $\bar x_T$ are the only points with $c=0$.

 \subsection{Picard number jumping}\label{sec:Picardjumping}
The \emph{Picard number} $\rho_z$ of the Hodge structure on $T\oplus\QQ\cdot\ell$ corresponding to $z\in\PP^1_\ell$ is defined to be
 the dimension of the $\QQ$-vector space of classes of type $(1,1)$, i.e.\ $$\rho_z\coloneqq\dim_\QQ(P_z^\perp\cap(T\oplus\QQ\cdot\ell)).$$

If the original Hodge structure $T$ was irreducible, then $P_x^\perp\cap(T\oplus\QQ\cdot\ell)=\QQ\cdot\ell$ and, therefore,
$\rho_x=1$. For very general $z\in\PP^1_\ell$,
the corresponding Hodge structure on $T\oplus\QQ\cdot\ell$ satisfies $\rho_z=0$. Indeed,
otherwise there would be a rational class in $T\oplus\QQ\cdot\ell$ orthogonal to $z$, but there
are only countable many such classes.
The Hodge structure on 
$T\oplus\QQ\cdot\ell$ corresponding to $z\in\PP^1_\ell$ satisfies $\rho_z\geq1$ (and, therefore,
is not irreducible) if and only if there exists a non-zero
class $\ell'\in T\oplus\QQ\cdot\ell$  orthogonal to $z$. If $z$ is different from $x$ and $\bar x$ and is written in the form
$z=[a\sigma+b\bar\sigma+\ell]$, then orthogonality of $z$ and $\ell'$ 
is expressed by the equation 
\begin{equation}\label{eqn:linear}
a(\sigma.\ell')+b(\bar\sigma.\ell')+(\ell.\ell')=0.
\end{equation}

 The following observation will be made more more precise in the CM case later.
 
 \begin{lem}\label{lem:QbarQbar}
 Let $k_z$ be the period field of a point $z\in \PP^1_\ell$ with
  $\rho_z>0$. Then  $k_T\subset\bar\QQ$ if and only if $k_z\subset\bar\QQ$.
 \end{lem}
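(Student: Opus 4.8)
The statement is an equivalence about whether the period field $k_z$ is contained in $\bar\QQ$, given that the auxiliary Hodge structure $T\oplus\QQ\cdot\ell$ at $z$ has a nontrivial $(1,1)$-part. My plan is to use equations (\ref{eqn:quad}) and (\ref{eqn:linear}) to express the coordinates $(a,b)$ of $z$ algebraically in terms of data coming from $T$ (the entries $(\sigma.\gamma_i)$, the intersection numbers, and $(\sigma.\bar\sigma)$), and then to compare the field generated by the coordinates of $z$ with $k_T$. The point of the hypothesis $\rho_z>0$ is exactly that it forces the linear relation (\ref{eqn:linear}), which is what makes $a$ (equivalently the ratio $a/b$, or $a$ after the normalization $c=1$) \emph{rational} over the relevant field rather than merely transcendental.

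\textbf{Key steps, in order.} First, I would fix $z=[a\sigma+b\bar\sigma+\ell]\neq x,\bar x$, so $c=1$. The existence of $0\neq\ell'\in T\oplus\QQ\cdot\ell$ with $(\ell'.z)=0$ gives, via (\ref{eqn:linear}), a linear equation $a(\sigma.\ell')+b(\bar\sigma.\ell')=-(\ell.\ell')$ with coefficients that, after writing $\ell'$ in a fixed basis of $T\oplus\QQ\cdot\ell$, are $\QQ$-linear combinations of the $x_i=(\sigma.\gamma_i)$, their conjugates $\bar x_i$, and rational numbers. Second, combine this with the quadric relation $2ab(\sigma.\bar\sigma)+d=0$ from (\ref{eqn:quad}): from the linear equation one gets $b$ as a rational function of $a$ over $k_T\cdot\overline{k_T}$ (or one eliminates to get a single polynomial equation for $a$ of degree $\leq 2$ with coefficients in that compositum). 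Hence $a$ and $b$ are algebraic over the compositum $k_T\cdot\overline{k_T}$, and the period field $k_z=\QQ(a,b,\dots)$ is contained in $\overline{k_T\cdot\overline{k_T}}$. Third, observe that complex conjugation preserves $\bar\QQ$, so $k_T\subset\bar\QQ$ implies $\overline{k_T}\subset\bar\QQ$, hence the compositum and its algebraic closure-within-$\CC$ relative to $\QQ$ stay inside $\bar\QQ$; this yields $k_z\subset\bar\QQ$. For the converse, I would run the same elimination the other way: knowing $a,b\in\bar\QQ$ (from $k_z\subset\bar\QQ$) and that $z$ is orthogonal to some nonzero $\ell'$, I would show the coordinates $x_i/x_1$ are forced to be algebraic — concretely, $2ab(\sigma.\bar\sigma)=-d$ together with $(\sigma.\bar\sigma)=\sum$(quadratic expression in the $x_i,\bar x_i$ with rational coefficients coming from the Gram matrix) pins down an algebraic relation, and the linear relation (\ref{eqn:linear}) gives a further algebraic relation among the $x_i,\bar x_i$ over $\bar\QQ$. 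Using that $\rho_z>0$ provides at least one such $\ell'$ and hence at least one genuine constraint, and then invoking irreducibility of $T$ (so that no $x_i$ vanishes and the $x_i/x_1$ span $k_T$, by Corollary \ref{cor:irredHS}), I would conclude $k_T\subset\bar\QQ$.

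\textbf{Main obstacle.} The delicate direction is the converse ($k_z\subset\bar\QQ\Rightarrow k_T\subset\bar\QQ$): a priori, knowing only the two coordinates $a,b$ of $z$ are algebraic does not obviously control all of $k_T=\QQ(x_2/x_1,\dots,x_r/x_1)$, since $z$ lives in a $\PP^1$ whereas the period domain of $T$ has dimension $r-2$. The resolution must exploit that $z$ is not generic: it lies on the \emph{specific} conic $\PP^1_\ell$ built from $\sigma$, so the coordinates $a,b$ are not free parameters but are tied to $(\sigma.\bar\sigma)$ and to the $(\sigma.\ell')$ through (\ref{eqn:quad})–(\ref{eqn:linear}); the hypothesis $\rho_z>0$ guarantees the linear constraint (\ref{eqn:linear}) is actually present. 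I expect the cleanest route is to pick $\ell'\in\NS$-type class as part of a $\QQ$-basis of $T\oplus\QQ\cdot\ell$, expand everything in that basis, and show that the two equations, read as relations among $x_i,\bar x_i$ with coefficients in $\QQ(a,b)\subseteq\bar\QQ$, together with the reality constraint $\bar x_i=\overline{x_i}$, force each $x_i/x_1\in\bar\QQ$. One must be slightly careful that $\ell'$ could be a rational multiple of $\ell$ only in degenerate positions (excluded since $z\ne x,\bar x$), so (\ref{eqn:linear}) genuinely involves $\sigma$. This bookkeeping — choosing the right $\ell'$ and basis and checking the elimination is nondegenerate — is where the real content sits; the rest is formal manipulation with subfields of $\CC$ stable under conjugation.
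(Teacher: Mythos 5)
Your forward direction ($k_T\subset\bar\QQ\Rightarrow k_z\subset\bar\QQ$) is essentially the paper's argument: the conic $\PP^1_\ell$ acquires algebraic coefficients and $z$ is one of the two points cut out by the rational hyperplane $\ell'^\perp$, hence has algebraic coordinates. One caveat even here: you should first rescale $\sigma$ so that $\sigma\in T_{\bar\QQ}$ (possible precisely because $k_T\subset\bar\QQ$, cf.\ the mechanism of Lemma \ref{lem:r-1sigma}); without that normalization your intermediate claim that $a,b$ are algebraic over $k_T\cdot\overline{k_T}$ is false, since $(\sigma.\ell')=\sum q_ix_i$ is $x_1$ times an element of $k_T$ and $x_1$ itself is typically transcendental. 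Only the products $ax_1$, $b\bar x_1$ are controlled, which is exactly what the rescaling repairs.

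The genuine gap is in the converse. You propose to "run the elimination the other way": from $a,b\in\bar\QQ$ plus (\ref{eqn:quad}) and (\ref{eqn:linear}) deduce that all $x_i/x_1$ are algebraic. This cannot work as stated, for two reasons. First, $k_z\subset\bar\QQ$ does not give you $a,b\in\bar\QQ$; it gives you that the $r$ quantities $ax_i+b\bar x_i=(\sigma'.\gamma_i)$ are algebraic up to a common scalar (the coordinate $(\sigma'.\ell)=d$ being rational). Second, and more fundamentally, two or three polynomial relations among the $r-1$ ratios $x_i/x_1$ and their conjugates can never force all of them into $\bar\QQ$: the period of $T$ varies in an $(r-2)$-dimensional family, so a bounded number of algebraic constraints leaves positive transcendence degree whenever $r>4$ or so. The information you actually need is the full set of $r+1$ algebraic coordinates of $z$, not the two constraint equations. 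The correct route is the symmetry of the situation: $x=[\sigma]$ and $z$ play interchangeable roles, each being one of the two intersection points of the conic $\PP^1_\ell$ with a rational hyperplane ($\ell^\perp$ for $x$, since $(\sigma.\ell)=0$ automatically, and $\ell'^\perp$ for $z$, which is where $\rho_z>0$ enters). So for the converse one rescales a representative $\sigma'$ of $z$ to lie in $T_{\bar\QQ}\oplus\bar\QQ\cdot\ell$, observes that in the basis $\sigma',\bar\sigma',\ell$ of $P_{T\CC}\oplus\CC\cdot\ell$ the conic again has algebraic coefficients, and recovers $x$ as one of the two points of $\ell^\perp\cap\PP^1_\ell$, hence $\sigma=\alpha_0\sigma'+\beta_0\bar\sigma'+\gamma_0\ell$ with $\alpha_0,\beta_0,\gamma_0\in\bar\QQ$ and $x_i=\alpha_0(\sigma'.\gamma_i)+\beta_0(\bar\sigma'.\gamma_i)\in\bar\QQ$ after normalization. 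Your "main obstacle" paragraph correctly senses that two equations are not enough, but the proposed bookkeeping with a well-chosen basis does not supply the missing input; the role-reversal does.
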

 
 \begin{proof}
 If $k_T$ is algebraic, then after rescaling $\sigma$, we may assume $\sigma\in T_{\bar\QQ}\subset T_{\bar\QQ}\oplus\bar\QQ\cdot\ell$. Then the twistor conic
 is defined by the quadratic equation $2ab(\sigma.\bar\sigma)+c^2d=0$ with
 coefficients $(\sigma.\bar\sigma)$ and $d$ contained in $\bar\QQ$.
 Clearly, the intersection with the rational hyperplane $\ell'^\perp$ consists of two points
 with affine coordinates in $\bar\QQ$, one of which corresponds to $z$.
 \end{proof}
%\begin{remark}
%From (\ref{eqn:quad}) and (\ref{eqn:linear}) one immediately concludes that for any $z\in %\PP^1_\ell$ with $\rho_z>0$ the period field $k_z$ of the associated Hodge structure on
%$T\oplus\QQ\cdot\ell$ is algebraic if the original period field $k_T$ was algebraic. This will be made more precise for points away from the equator and assuming that the original $T$ was CM.
%\end{remark}

\begin{prop}\label{prop:Picardjump}
Assume $T$ is an irreducible Hodge structure of K3 type and signature $(2,r-2)$. Then for the twistor base $\PP^1_\ell\cong S^2_\ell$ one has
\begin{enumerate}
\item[{\rm (i)}]  The set $\{z\mid\rho_z\geq1\}\subset\PP^1_\ell\cong S^2_\ell$ is countable and dense (in the classical topology).
\item[{\rm (ii)}] The set $\{z\mid\rho_z>1\}$ is at most countable and contained in the equator $S^1_\ell\subset S^2_\ell$. 
\end{enumerate}
\end{prop}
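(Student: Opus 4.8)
The plan is to treat the two statements separately, both following from the observation that the condition $\rho_z\geq 1$ is equivalent to the existence of a nonzero $\ell'\in T\oplus\QQ\cdot\ell$ with $\ell'\perp z$, and that for $z=[a\sigma+b\bar\sigma+\ell]$ this is the linear equation (\ref{eqn:linear}) in $a,b$, coupled with the quadratic constraint (\ref{eqn:quad}) cutting out $\PP^1_\ell$.

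\emph{Part (i).} For countability: each $\ell'$ in the countable set $T\oplus\QQ\cdot\ell$ contributes the points of $\PP^1_\ell$ lying on the hyperplane $\ell'^\perp$, and a line (the hyperplane) meets the conic $\PP^1_\ell$ in at most two points; hence $\{z\mid\rho_z\geq 1\}$ is a countable union of finite sets. For density: I would work on the Grassmannian side $S^2_\ell=\{\alpha\mid(\alpha.\alpha)=1\}\subset P_T\oplus\RR\cdot\ell$, where $\rho_z\geq 1$ corresponds to $\alpha$ being orthogonal to some nonzero $\ell'\in T\oplus\QQ\cdot\ell$. Given any $\alpha_0\in S^2_\ell$ and any $\varepsilon>0$, I want a rational (or just: lying in $(T\oplus\QQ\cdot\ell)$) vector $\ell'$ whose orthogonal hyperplane, intersected with the sphere, passes within $\varepsilon$ of $\alpha_0$. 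The hyperplanes $\ell'^\perp$ for $\ell'$ ranging over $(T\oplus\QQ\cdot\ell)\setminus 0$ are dense in the space of all hyperplanes of $(P_T\oplus\RR\cdot\ell)$ — equivalently, the lines $\RR\cdot\ell'$ are dense, since $(T\oplus\QQ\cdot\ell)_\RR=P_T\oplus\RR\cdot\ell$ is spanned by its rational-type points (here $P_T$ need not be rational, but it is the real span of the $(\sigma.\gamma_i)$-data; more simply, $T\oplus\QQ\cdot\ell$ is $\QQ$-dense in its real points). Pick $\ell'$ with $\ell'^\perp$ close to some hyperplane through $\alpha_0$; then $\ell'^\perp\cap S^2_\ell$ is a great circle passing near $\alpha_0$, and any point of it is a $z$ with $\rho_z\geq 1$ near $\alpha_0$. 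I would be slightly careful that $\ell'^\perp\cap S^2_\ell$ is nonempty, i.e.\ that the hyperplane meets the sphere, which holds as soon as $\ell'$ is not in the positive cone's axis — a generic and easily arranged condition.

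\emph{Part (ii).} Suppose $z$ is not in the equator and $\rho_z\geq 2$. Then there are two $\QQ$-linearly independent classes $\ell_1',\ell_2'\in T\oplus\QQ\cdot\ell$, both orthogonal to $z$ (hence both of type $(1,1)$), so $z\in\ell_1'^\perp\cap\ell_2'^\perp$. On the $S^2_\ell$ side this forces $\alpha_z$ to lie in the $2$-plane $(\langle\ell_1',\ell_2'\rangle_\RR)^\perp\cap(P_T\oplus\RR\cdot\ell)$, i.e.\ $\alpha_z$ lies in a fixed rational line of $P_T\oplus\RR\cdot\ell$, which is only a countable set of possibilities on the sphere — giving the "at most countable" part. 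For the containment in the equator: the key point is that $T$ itself is irreducible of signature $(2,r-2)$, so inside $T$ there is no nonzero rational class orthogonal to the positive plane $P_z$ unless $P_z$ meets $T$ badly; concretely, if $z\ne x,\bar x$, write $z=[a\sigma+b\bar\sigma+\ell]$ and note that an $\ell'=\gamma+c\ell$ with $\gamma\in T$, $c\in\QQ$, orthogonal to $z$ gives $a(\sigma.\gamma)+b(\bar\sigma.\gamma)+cd=0$. Two independent such relations, together with $2ab(\sigma.\bar\sigma)+d=0$, over-determine $(a,b)$; I would argue that the only way to get two genuinely independent solutions is that the "$\ell$-component" drops out, i.e.\ the relevant classes live in $T$ and are orthogonal to the projection of $P_z$ to $T_\RR$ — but by irreducibility and Lemma \ref{lem:irredHS1} a nonzero class in $T$ cannot be orthogonal to $\sigma$, so this projection must be degenerate, which is exactly the statement $\ell\in P_z$, i.e.\ $z\in S^1_\ell$. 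The cleanest formulation: if $z\notin S^1_\ell$ then $P_z\cap\ell^\perp$ is a line $L_z$, orthogonal projection to $T_\RR$ embeds $L_z$, and $P_z^\perp\cap(T\oplus\QQ\cdot\ell)$ injects under $\ell'\mapsto\ell'-\frac{(\ell.\ell')}{d}\ell$ into $T\cap (L_z)^\perp$; since $L_z\subset P_T$ (it's spanned by a combination of $\mathrm{Re}(\sigma),\mathrm{Im}(\sigma)$) is a line on which the form is positive, $T\cap L_z^\perp$ is a sub-Hodge structure of $T$ complementary to something containing $\sigma$... I should instead just show $T\cap L_z^\perp$ contains no nonzero class beyond what irreducibility allows, bounding $\rho_z\leq 1$.

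\emph{Main obstacle.} The delicate point is part (ii): making precise why two independent $(1,1)$-classes force $z$ onto the equator, rather than merely onto some proper subvariety. The heart of it is converting "$\ell'_1,\ell'_2\perp z$" into a statement purely about $T$ via projecting away the $\ell$-direction, and then invoking irreducibility of $T$ (via Lemma \ref{lem:irredHS1}) to conclude that the projected classes, if nonzero and orthogonal to $P_z\cap\ell^\perp$, would violate irreducibility unless $P_z\cap\ell^\perp$ fails to inject into $P_T$ — and that failure is precisely $\ell\in P_z$. I expect the only real work is bookkeeping with the three-dimensional space $P_T\oplus\RR\cdot\ell$ and checking the projection is injective off the equator; everything else (countability in (i) and (ii), density in (i)) is soft.
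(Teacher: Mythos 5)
Part (i) and both countability claims are fine and are essentially what the paper does (it simply cites the standard Noether--Lefschetz-type density argument and the obvious count of points in $\ell'^\perp\cap\PP^1_\ell$). The genuine problem is the containment of $\{\rho_z>1\}$ in the equator. Your ``cleanest formulation'' bounds $\rho_z$ by $\dim_\QQ\bigl(T\cap L_z^\perp\bigr)$ via the injection $\ell'\mapsto\ell'-\tfrac{(\ell.\ell')}{d}\,\ell$, and then hopes to bound the latter by $1$ using the injectivity of the orthogonal projection $T\hookrightarrow P_T$ from Lemma \ref{lem:irredHS1}. This does not work: that projection maps $T\cap L_z^\perp$ into the real line $L_z^\perp\cap P_T$, but a $\QQ$-linear injection of a rational subspace into a one-dimensional \emph{real} vector space does not bound its $\QQ$-dimension by one (think of $\QQ\oplus\QQ\sqrt2\hookrightarrow\RR$). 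Two classes $\gamma_1,\gamma_2\in T\cap L_z^\perp$ have $\RR$-proportional projections, but you have no control over the rationality of the proportionality factor, and without that you cannot conclude $\QQ$-linear dependence. In fact the inequality $\dim_\QQ(T\cap L^\perp)\le 1$ is false for suitable lines $L\subset P_T$ (e.g.\ when $L$ is spanned by a rational vector, in which case $T\cap L^\perp$ has dimension $r-1$), so the intermediate bound your reduction requires is not available: projecting away the $\ell$-direction discards exactly the information you need.

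What makes the argument work --- and this is how the paper proceeds --- is that the $\ell$-components are \emph{rational numbers}, and they are what supplies the rationality of the proportionality factor. Write $\ell_i=\ell_i'+\mu_i\ell$ with $\ell_i'\in T$, $\mu_i\in\QQ$, both orthogonal to $z$. Their orthogonal projections $\bar\ell_i$ into the positive three-space $P_T\oplus\RR\cdot\ell$ are nonzero and both span the line $P_z^\perp\cap(P_T\oplus\RR\cdot\ell)$, so $\bar\ell_2=\lambda\bar\ell_1$ for some $\lambda\in\RR$. If $\mu_1\ne0\ne\mu_2$, comparing $\ell$-components gives $\lambda=\mu_2/\mu_1\in\QQ$, and then Lemma \ref{lem:irredHS1} applied to $\ell_2'-\lambda\ell_1'\in T$ yields $\ell_2=\lambda\ell_1$, so $\rho_z=1$. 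The mixed case $\mu_1\ne0=\mu_2$ is impossible, since comparing $\ell$-components would force $\lambda=0$ and hence $\ell_2=0$. If $\mu_1=\mu_2=0$, then $\ell$ is orthogonal to $\bar\ell_1$, which spans $P_z^\perp$ in the three-space, hence $\ell\in P_z$, i.e.\ $z\in S^1_\ell$. So the missing idea is the case distinction on the rational $\ell$-components $\mu_i$ carried out inside the three-space $P_T\oplus\RR\cdot\ell$, not a projection into $T$.
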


\begin{picture}(100,140)
\put(145,19){\begin{tikzpicture}  
\filldraw[draw=black,fill=white,opacity=0.9] (2,2) circle (1.9cm);\end{tikzpicture}}
 
\put(145,58){\begin{tikzpicture}
 \draw[black, line width=0.2mm]  (0,0) .. controls (1.2,-0.5) and (2.6,-0.5) .. (3.8,0);
\end{tikzpicture}}

\put(145,72.5){\begin{tikzpicture}
 \draw[black, line width=0.1mm,opacity=0.4]  (0,0) .. controls (1.2,0.5) and (2.6,0.5) .. (3.8,0);
\end{tikzpicture}}
\put(122,115){$S^2_\ell$}
\put(122,71){$S^1_\ell$}
\put(152,91){\begin{tikzpicture}\draw (-2,1,5) node[anchor=south] {.};\end{tikzpicture}}
\put(172,111){\begin{tikzpicture}\draw (-2,1,5) node[anchor=south] {.};\end{tikzpicture}}
\put(192,119){\begin{tikzpicture}\draw (-2,1,5) node[anchor=south] {.};\end{tikzpicture}}
\put(210,85){\begin{tikzpicture}\draw (-2,1,5) node[anchor=south] {.};\end{tikzpicture}}
\put(192,81){\begin{tikzpicture}\draw (-2,1,5) node[anchor=south] {.};\end{tikzpicture}}
\put(182,51){\begin{tikzpicture}\draw (-2,1,5) node[anchor=south] {.};\end{tikzpicture}}
\put(172,81){\begin{tikzpicture}\draw (-2,1,5) node[anchor=south] {.};\end{tikzpicture}}
\put(182,51){\begin{tikzpicture}\draw (-2,1,5) node[anchor=south] {.};\end{tikzpicture}}
\put(192,81){\begin{tikzpicture}\draw (-2,1,5) node[anchor=south] {.};\end{tikzpicture}}
\put(222,51){\begin{tikzpicture}\draw (-2,1,5) node[anchor=south] {.};\end{tikzpicture}}
\put(232,81){\begin{tikzpicture}\draw (-2,1,5) node[anchor=south] {.};\end{tikzpicture}}
\put(232,91){\begin{tikzpicture}\draw (-2,1,5) node[anchor=south] {.};\end{tikzpicture}}
\put(222,101){\begin{tikzpicture}\draw (-2,1,5) node[anchor=south] {.};\end{tikzpicture}}
\put(212,111){\begin{tikzpicture}\draw (-2,1,5) node[anchor=south] {.};\end{tikzpicture}}
\put(182,51){\begin{tikzpicture}\draw (-2,1,5) node[anchor=south] {.};\end{tikzpicture}}
\put(163,45){\begin{tikzpicture}\draw (-2,1,5) node[anchor=south] {.};\end{tikzpicture}}
\put(147,55){\begin{tikzpicture}\draw (-2,1,5) node[anchor=south] {.};\end{tikzpicture}}
\put(222,51){\begin{tikzpicture}\draw (-2,1,5) node[anchor=south] {.};\end{tikzpicture}}
\put(183,45){\begin{tikzpicture}\draw (-2,1,5) node[anchor=south] {.};\end{tikzpicture}}
\put(167,55){\begin{tikzpicture}\draw (-2,1,5) node[anchor=south] {.};\end{tikzpicture}}
\put(182,51){\begin{tikzpicture}\draw (-2,1,5) node[anchor=south] {.};\end{tikzpicture}}
\put(163,45){\begin{tikzpicture}\draw (-2,1,5) node[anchor=south] {.};\end{tikzpicture}}
\put(147,55){\begin{tikzpicture}\draw (-2,1,5) node[anchor=south] {.};\end{tikzpicture}}
\put(242,51){\begin{tikzpicture}\draw (-2,1,5) node[anchor=south] {.};\end{tikzpicture}}
\put(203,45){\begin{tikzpicture}\draw (-2,1,5) node[anchor=south] {.};\end{tikzpicture}}
\put(187,55){\begin{tikzpicture}\draw (-2,1,5) node[anchor=south] {.};\end{tikzpicture}}

\put(182,71){\begin{tikzpicture}\draw (-2,1,5) node[anchor=south] {.};\end{tikzpicture}}
\put(163,68){\begin{tikzpicture}\draw (-2,1,5) node[anchor=south] {.};\end{tikzpicture}}
\put(147,75){\begin{tikzpicture}\draw (-2,1,5) node[anchor=south] {.};\end{tikzpicture}}
\put(222,71){\begin{tikzpicture}\draw (-2,1,5) node[anchor=south] {.};\end{tikzpicture}}
\put(203,68){\begin{tikzpicture}\draw (-2,1,5) node[anchor=south] {.};\end{tikzpicture}}
\put(187,55){\begin{tikzpicture}\draw (-2,1,5) node[anchor=south] {.};\end{tikzpicture}}
\put(202,31){\begin{tikzpicture}\draw (-2,1,5) node[anchor=south] {.};\end{tikzpicture}}
\put(183,28){\begin{tikzpicture}\draw (-2,1,5) node[anchor=south] {.};\end{tikzpicture}}
\put(157,35){\begin{tikzpicture}\draw (-2,1,5) node[anchor=south] {.};\end{tikzpicture}}
\put(231,33){\begin{tikzpicture}\draw (-2,1,5) node[anchor=south] {.};\end{tikzpicture}}
\put(223,28){\begin{tikzpicture}\draw (-2,1,5) node[anchor=south] {.};\end{tikzpicture}}
\put(2017,35){\begin{tikzpicture}\draw (-2,1,5) node[anchor=south] {.};\end{tikzpicture}}

\put(144,66.5){\begin{tikzpicture}\draw (-2,1,5) node[anchor=south] {.};\end{tikzpicture}}
%\put(152,65.4){\begin{tikzpicture}\draw (-2,1,5) node[anchor=south] {.};\end{tikzpicture}}
\put(160,61.5){\begin{tikzpicture}\draw (-2,1,5) node[anchor=south] {.};\end{tikzpicture}}
%\put(168,61.8){\begin{tikzpicture}\draw (-2,1,5) node[anchor=south] {.};\end{tikzpicture}}
\put(176,58.5){\begin{tikzpicture}\draw (-2,1,5) node[anchor=south] {.};\end{tikzpicture}}
%\put(184,58){\begin{tikzpicture}\draw (-2,1,5) node[anchor=south] {.};\end{tikzpicture}}
\put(192,57.4){\begin{tikzpicture}\draw (-2,1,5) node[anchor=south] {.};\end{tikzpicture}}
%\put(200,58.1){\begin{tikzpicture}\draw (-2,1,5) node[anchor=south] {.};\end{tikzpicture}}
\put(208,58.){\begin{tikzpicture}\draw (-2,1,5) node[anchor=south] {.};\end{tikzpicture}}
%\put(216,59.8){\begin{tikzpicture}\draw (-2,1,5) node[anchor=south] {.};\end{tikzpicture}}
\put(224,60.5){\begin{tikzpicture}\draw (-2,1,5) node[anchor=south] {.};\end{tikzpicture}}
%\put(232,61.6){\begin{tikzpicture}\draw (-2,1,5) node[anchor=south] {.};\end{tikzpicture}}
\put(240,64.9){\begin{tikzpicture}\draw (-2,1,5) node[anchor=south] {.};\end{tikzpicture}}
%\put(247,64.2){\begin{tikzpicture}\draw (-2,1,5)[opacity=1.5] node[anchor=south] {.};\end{tikzpicture}}

\put(232,81){\begin{tikzpicture}\draw (-2,1,5)[opacity=0.4] node[anchor=south] {.};\end{tikzpicture}}
\put(218,101){\begin{tikzpicture}\draw (-2,1,5)[opacity=0.4] node[anchor=south] {.};\end{tikzpicture}}
\put(202,111){\begin{tikzpicture}\draw (-2,1,5)[opacity=0.4] node[anchor=south] {.};\end{tikzpicture}}
\put(172,101){\begin{tikzpicture}\draw (-2,1,5)[opacity=0.4] node[anchor=south] {.};\end{tikzpicture}}
\put(162,91){\begin{tikzpicture}\draw (-2,1,5)[opacity=0.4] node[anchor=south] {.};\end{tikzpicture}}
\put(182,101){\begin{tikzpicture}\draw (-2,1,5)[opacity=0.4] node[anchor=south] {.};\end{tikzpicture}}
\put(192,91){\begin{tikzpicture}\draw (-2,1,5)[opacity=0.4] node[anchor=south] {.};\end{tikzpicture}}
\put(197,125){${\footnotesize\bullet}$}

\put(202,135){ $x$}
\put(280,70){$\rho>\rho_x$}
\put(257,70){$\leftarrow$}
 \end{picture}
 
 \begin{proof}
The first assertion is well known, cf.\ \cite[Prop.\ 6.2.9]{HuyK3}, and in our situation follows
from the observation that  for any non-zero $\ell'\in T\oplus\QQ\cdot\ell$ the two points $z$ and $\bar z$
in $\ell'^\perp\cap\PP^1_\ell$ satisfy $\rho_z\geq1$. We concentrate on the second assertion. Again, the assertion that the set is at most countable is a standard Hodge theoretic fact.
We have to show it is contained in the equator $S^1_\ell$ of all $z\in \PP^1_\ell$ with $\ell\in P_z$. Assume $\ell_i=\ell_i'+\mu_i\cdot\ell\in T\oplus \QQ\cdot\ell$, $i=1,2$, are two classes  orthogonal to $z$, i.e.\ their orthogonal projections $\bar\ell_i=\bar\ell'_i\oplus \mu_i\cdot\ell$ in $P_T\oplus\RR\cdot\ell$ both span 
$P_z^\perp$, i.e.\ $\langle\bar\ell_1\rangle_\RR=P_z^\perp=\langle\bar\ell_2\rangle_\RR$. If $\mu_1\ne0\ne\mu_2$, 
% the classes $\ell_i$ can both be scaled by elements in $\QQ$ such that $\mu_1=1=\mu_2$. But
then $\langle\bar\ell_1\rangle_\RR=\langle\bar\ell_2\rangle_\RR$
implies $\mu_2\cdot\bar\ell_1'=\mu_1\cdot\bar\ell_2'$ in $P_T$. As the orthogonal projection $T\,\hookrightarrow P_T$ is injective by Lemma \ref{lem:irredHS1},
we find $\mu_2\cdot\ell_1'=\mu_1\cdot\ell_2'$, i.e.\ $\ell_1,\ell_2$ are linearly dependent and thus $\rho_z=1$.
If only $\mu_1\ne0$ but $\mu_2=0$, then $\bar\ell_1,\bar\ell_2=\bar\ell_2'\in P\oplus\RR\cdot\ell$ cannot be linearly dependent over $\RR$, which contradicts  $\langle\bar\ell_1\rangle_\RR=\langle\bar\ell_2\rangle_\RR$
and shows that the case  $\mu_1\ne0=\mu_2$ does not occur. In the remaining
case $\mu_1=0=\mu_2$, i.e.\ $\ell_1,\ell_2\in T$, clearly $\bar\ell$ is orthogonal to $\bar\ell_i$, i.e.\ $\ell\in P_z=\bar\ell_1^\perp=\bar\ell_2^\perp$ and, therefore, $z\in S^1_\ell$.
\end{proof}

%%%%%%%%%%%%%%%%%%%%%%%%%%%%%%%%%%%%

\begin{remark}
Clearly, the inclusion in (ii) above is strict, i.e.\ the very general $z\in S^1_\ell$ will
still satisfy $\rho_z=0$. Also, there may be points $z\in S^1_\ell$ where
$\rho_z$ does jump, but not excessively, i.e.\ $\rho_z=1$. In fact,
there may be no $z\in\PP^1_\ell$ at all with $\rho_z>1$. For example, when $\dim_\QQ T=2$,
then $P_z^\perp\subset T_\RR\oplus\RR\cdot\ell$ can contain at most one rational class up to scaling.
From Remark \ref{rem:irredm2=0} below, one can also deduce that for $\dim_\QQ T\geq5$, there always exists a $z\in\PP^1_\ell$ with $\rho_z>1$.
%\footnote{Are there examples where the set is not empty but finite?}

Note however, that in the main result of this paper we have to exclude all points in the equator $S^1_\ell$ and not only those with an excessive Picard number, cf.\ Lemma \ref{lem:equatorbad} and Corollary \ref{cor:notCM}.
\end{remark}
%After normalization, the coefficients $(\sigma.\ell')$ and its conjugate $(\bar\sigma.\ell')$ are contained in the period field $k_T$ and,
%of course, $(\ell.\ell')\in\QQ$. DO WE HAVE TO NORMALIZE??

Let us fix a class $\ell'\in T\oplus\QQ\cdot \ell$  with $(\ell'.\ell')>0$ and consider
 $x'\in\PP^1_\ell$ orthogonal to $\ell'$. In fact, due to (\ref{eqn:quad}) and (\ref{eqn:linear}),
$x'$ is uniquely determined by $\ell'$  up to conjugation. We will use the notation $T'\coloneqq \ell'^\perp$.
Note that the restriction of $(~.~)$ to $T'$ has signature $(2,r-2)$.
  
 Assume now that $x'$ is not contained in  the equator $S^1_\ell$, i.e.\ $\ell'\not\in T$, and that $x'\ne x,\bar x$, i.e.\ $\ell'\not\in\QQ\cdot\ell$.
To simplify notations we introduce $0\ne m\coloneqq(\ell.\ell')\in\ZZ$ and pick $\sigma\in T^{2,0}$ such that $(\sigma.\ell')=1$.
Writing  $x'=[\sigma'\coloneqq a\sigma+b\bar\sigma+\ell]\in\PP^1_\ell$,  then (\ref{eqn:linear}) becomes
\begin{eqnarray}\label{eqn:abm}
a+b+m=0.
\end{eqnarray}

%%%%%%%%%%%%%%%%%%%%%%
\subsection{The twistor construction in the CM case}\label{sec:TwCM}
With the same assumptions as before, we now additionally assume that $T$ itself
has CM with CM field $K_T=k_T$. Pick a primitive element $\alpha$ of norm one, i.e.\ $K_T=\QQ(\alpha)$ with $\alpha\cdot\bar\alpha=1$.
Furthermore, we consider a basis $(\gamma_i)$ of $T$ that is given by letting $\gamma_1$ be the $T$-component of $\ell'$ and setting
$\gamma_i\coloneqq\alpha^{-1}(\gamma_{i-1})$ for $i>1$. As we have seen earlier (cf.\ proof of Lemma
\ref{lem:FirstCMkK}), then $\sigma$ corresponds to the vector
$(1,\alpha,\ldots,\alpha^{r-1})$ under $T\congpf \QQ^{r}$, $\gamma\mapsto((\gamma.\gamma_i))$.

A basis of $T'= \ell'^\perp$ is then given by $\gamma_i'\coloneqq \gamma_i-m^{-1}(\ell'.\gamma_i)\cdot\ell$.
With respect to this basis, the period $x'$ is represented by $((\sigma'.\gamma_i'))$. The coefficients
can be computed as follows:
\begin{eqnarray*}
(\sigma'.\gamma_i')&=&\left(a\sigma+b\bar\sigma+\ell.\gamma_i-m^{-1}(\ell'.\gamma_i)\cdot\ell\right)\\
&=&a(\sigma.\gamma_i)+b(\bar\sigma.\gamma_i)-dm^{-1}(\ell'.\gamma_i)\\
&=&a\alpha^{i-1}+b\alpha^{1-i}+m_i\\
&=&a(\alpha^{i-1}-\alpha^{1-i})-m\alpha^{1-i}+m_i
\end{eqnarray*}
with $m_i\coloneqq -dm^{-1}(\ell'.\gamma_i)\in\QQ$. Use (\ref{eqn:abm}) for the last equality.
Note that $(\sigma'.\gamma_1)\inÊ\QQ$.

\begin{remark}\label{rem:irredm2=0}
As we assume that $x'\not\in S_\ell^1$, the Hodge structure $T'$ does not contain any non-trivial
rational $(1,1)$-classes and, as the signature of $T'$ is $(2,r-2)$, is in fact irreducible.
\end{remark}

\begin{lem}\label{lem:periodsofTprime}
 The period field of the irreducible Hodge structure $T'$  is described by $$k_{T'}=\QQ(x'_i).$$
Here, $x'_i\coloneqq a(\alpha^{i-1}-\alpha^{1-i})-m\alpha^{1-i}$, $i=1,\ldots,r$, which are all non-zero.
 Moreover, the natural map yields an injection
$\bigoplus\QQ\cdot x_i'\,\hookrightarrow k_{T'}$.
\end{lem}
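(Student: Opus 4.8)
The plan is to compute the period field $k_{T'}$ directly from the coordinates $(\sigma'.\gamma_i')$ in the chosen basis, using Corollary \ref{cor:irredHS} applied to the irreducible Hodge structure $T'$ (irreducible by Remark \ref{rem:irredm2=0}). First I would observe that each coordinate splits as $(\sigma'.\gamma_i') = x_i' + m_i$ with $m_i \in \QQ$ and $x_i' \coloneqq a(\alpha^{i-1}-\alpha^{1-i}) - m\alpha^{1-i}$, so that the affine coordinates $(\sigma'.\gamma_i')/(\sigma'.\gamma_1')$ generate the same subfield of $\CC$ as $\QQ(x_2'/x_1', \ldots, x_r'/x_1')$; indeed $(\sigma'.\gamma_1') = x_1' + m_1$ and, since $x_1' = a(1-1) - m = -m \in \QQ$ (using $\alpha^0 = 1$), the coordinate $(\sigma'.\gamma_1')$ is rational. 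Hence dividing by it changes nothing transcendental, and $k_{T'} = \QQ((\sigma'.\gamma_i')) = \QQ(x_i')$.

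The bulk of the work is then the two ancillary claims: that every $x_i'$ is nonzero, and that the natural map $\bigoplus \QQ\cdot x_i' \to k_{T'}$ is injective, i.e.\ the $x_i'$ are $\QQ$-linearly independent. For nonvanishing I would argue that $x_i' = 0$ would force $a(\alpha^{i-1}-\alpha^{1-i}) = m\alpha^{1-i}$, i.e.\ $a(\alpha^{2(i-1)} - 1) = m$ (multiplying by $\alpha^{i-1}$), which expresses $\alpha^{2(i-1)}$ as a rational number once $a \ne 0$; but $a \ne 0$ since otherwise by \eqref{eqn:abm} we would get $b = -m$ too and $\sigma' = -m\sigma - m\bar\sigma + \ell \in (T\oplus\QQ\cdot\ell)$ rational, contradicting $\rho_{x'} = 1$ with $x' \ne x, \bar x$ — more carefully, $a = 0$ would make $x'$ lie in the span of $\bar\sigma$ and $\ell$, which one checks is impossible for $x'$ off the equator and distinct from the poles. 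Given $a \ne 0$, $\alpha^{2(i-1)} \in \QQ$ contradicts $\alpha$ being a primitive element of $K_T$ of degree $r = \dim_\QQ T \geq 3$ (for $i > 1$; for $i=1$ we already saw $x_1' = -m \ne 0$).

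For the linear independence, the cleanest route is to transfer the known independence of the $x_i = (\sigma.\gamma_i)$ — which holds by Corollary \ref{cor:irredHS}(i) applied to the irreducible $T$ — through the relation $(\sigma'.\gamma_i') = a(\alpha^{i-1} - \alpha^{1-i}) - m\alpha^{1-i} + m_i$. Writing $x_i = \alpha^{i-1}$ after the normalization $(\sigma.\ell')=1$ (so $(\sigma.\gamma_i) = \alpha^{i-1}$ as recorded in Section \ref{sec:TwCM}), one sees $x_i' = a\,x_i - (a+m)\,x_i^{-1} = a\,x_i - b\,\bar x_i$ is a $\QQ(a)$-linear combination of the $\alpha^j$ for $j \in \{-(r-1), \ldots, r-1\}$; a $\QQ$-linear relation $\sum c_i x_i' = 0$ would then become a $\QQ(a)$-linear, hence after clearing denominators a $\QQ$-linear relation among $1, \alpha, \ldots, \alpha^{r-1}$ and their inverses, i.e.\ among the powers $\alpha^0, \ldots, \alpha^{2(r-1)}$ after multiplying by $\alpha^{r-1}$. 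Since $[\QQ(\alpha):\QQ] = r$, only the minimal polynomial relation is available, and one must check it cannot be hit — here the isometry condition $\alpha\bar\alpha = 1$, i.e.\ the minimal polynomial being palindromic, together with a degree count, rules out the relation unless all $c_i$ vanish. The main obstacle I expect is exactly this last bookkeeping: one needs $a$ to be transcendental over $K_T^0 = K_T \cap \RR$ (or at least not to satisfy an algebraic relation that conspires with the minimal polynomial of $\alpha$), which should follow from $k_{T'}$ having the right transcendence/degree behavior — concretely from the fact, presumably established or easily checked here, that $a$ solves the quadratic \eqref{eqn:quad}–\eqref{eqn:abm} over $K_T$ and is not itself in $K_T$ unless $x'$ degenerates. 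I would isolate this as the one genuine computation and present the rest as formal manipulation of the displayed formula for $(\sigma'.\gamma_i')$.
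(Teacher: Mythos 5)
Your reduction $k_{T'}=\QQ((\sigma'.\gamma_i'))=\QQ(x_i')$ via the rationality of $x_1'=-m$ and of the constants $m_i$ is correct and is exactly how the paper proceeds. But for the two remaining claims you abandon the abstract machinery and leave a genuine gap. The paper's point is that Corollary \ref{cor:irredHS}(i), applied to the Hodge structure $T'$ itself (irreducible by Remark \ref{rem:irredm2=0}) with the basis $(\gamma_i')$, already says that the coordinates $(\sigma'.\gamma_i')=x_i'+m_i$ are $\QQ$-linearly independent. Given this, a relation $\sum c_i x_i'=0$ with $c_i\in\QQ$ yields $\sum c_i(\sigma'.\gamma_i')=\sum c_im_i\in\QQ$; since $(\sigma'.\gamma_1')\in\QQ^\ast$, the rational number on the right can be rewritten as a rational multiple of the coordinate $(\sigma'.\gamma_1')$, producing a rational linear relation among the $(\sigma'.\gamma_i')$ which forces $c_i=0$ for $i\geq 2$ and then $c_1=0$ from $x_1'=-m\neq0$. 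Linear independence of the $x_i'$ in turn gives their nonvanishing for free. You cite Corollary \ref{cor:irredHS} for $T'$ at the outset but then, for the independence, apply it only to $T$ and try to \emph{transfer} the statement through the explicit formula $x_i'=a\alpha^{i-1}+b\alpha^{1-i}$. That route dead-ends precisely where you say it does: a nontrivial relation amounts to $af=b\bar f$ for some $0\neq f\in K_T$, i.e.\ $a/b\in K_T$, and nothing you have established excludes this. You flag this as "the one genuine computation" and do not carry it out; as written the proof is incomplete. The whole point of Lemma \ref{lem:irredHS1} and Corollary \ref{cor:irredHS} is that irreducibility of $T'$ makes any such case analysis unnecessary.

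Two smaller errors. In the nonvanishing step, $x_i'=0$ gives $a(\alpha^{2(i-1)}-1)=m$, hence $\alpha^{2(i-1)}=1+m/a$; this is \emph{not} a rational number "once $a\neq0$" --- it is rational only if $a$ is, so this argument again secretly needs information about $a$ relative to $K_T$ that you have not supplied. (The clean way to see $a\neq0$, incidentally, is that $a=0$ in \eqref{eqn:quad} forces $c^2d=0$, contradicting $d>0$ and $c=1$; your displayed $\sigma'=-m\sigma-m\bar\sigma+\ell$ is also not what $a=0$, $b=-m$ gives.) Neither repair is needed once you route everything through Corollary \ref{cor:irredHS} applied to $T'$.
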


\begin{proof}
This is a direct consequence of Corollary \ref{cor:irredHS} and the fact that $x'_1\in\QQ$.
\end{proof}

The next technical result together with Lemma \ref{lem:converseCMperiod} will lead to an action of the 
maximal totally real subfield $K_T^0\coloneqq K_T\cap \RR$ of the CM field $K_T$ 
on the Hodge structures $T'=\ell'^\perp$ away from the equator. As $K_T^0$
is rather big, recall it satisfies $[K_T^0:\QQ]=r/2$, this will suffice to conclude.

\begin{lem}\label{lem:KTOPLUS}
The real field $K_T^0\coloneqq K_T\cap \RR$ has the following properties.
\begin{enumerate}
\item[{\rm (i)}] $K_T^0\subset \bigoplus\QQ\cdot x_i'\subset k_{T'}$.
\item[{\rm (ii)}] Multiplication in $k_{T'}$ with elements of $K_T^0$ preserves the subspace
$\bigoplus\QQ\cdot x_i'$.
\end{enumerate}
\end{lem}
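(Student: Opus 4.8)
The plan is to work entirely with the explicit coordinates $x_i' = a(\alpha^{i-1}-\alpha^{1-i}) - m\alpha^{1-i}$ provided by Lemma \ref{lem:periodsofTprime} and to exploit the multiplicative structure of the CM field $K_T=\QQ(\alpha)$. The key observation is that $\alpha\cdot\bar\alpha=1$, so for $\beta = \alpha+\bar\alpha = \alpha+\alpha^{-1}\in K_T^0$ one has $K_T^0=\QQ(\beta)$ and $[K_T^0:\QQ]=r/2$; moreover powers $\beta^j$ are $\QQ$-linear combinations of the symmetric combinations $\alpha^{j}+\alpha^{-j}$. First I would record the identity obtained by pairing $x_i'$ with $x_{2-i}'$-type reflections: writing $s_j\coloneqq \alpha^{j-1}-\alpha^{1-j}$ and $t_j\coloneqq\alpha^{1-j}$, so that $x_j' = a\,s_j - m\,t_j$, multiplication by $\beta$ sends $\alpha^{k}\mapsto \alpha^{k+1}+\alpha^{k-1}$ and hence acts on the two families $\{s_j\}$ and $\{t_j\}$ by a fixed integer three-term recurrence that shifts indices. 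The content of (ii) is that this shift, restricted to the span $\bigoplus_j \QQ\cdot x_j'$, stays inside that span; the content of (i) is that $1\in\bigoplus\QQ\cdot x_i'$ together with closure under $\beta$ forces all of $K_T^0=\QQ[\beta]$ into it.

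For (ii) the concrete computation is: $\beta\cdot x_j' = a\,\beta s_j - m\,\beta t_j$, and since $\beta s_j = s_{j-1}+s_{j+1}$ and $\beta t_j = t_{j-1}+t_{j+1}$ (checking the boundary indices $j=1$ and $j=r$ separately, where $s_0=0$ wraps around or produces terms $\alpha^{r}, \alpha^{-r}$ that must be re-expressed via the degree-$r$ minimal polynomial of $\alpha$), we get $\beta\cdot x_j' = x_{j-1}' + x_{j+1}'$ up to the rational correction terms coming from $m\,t_j$ and from reducing $\alpha^{\pm r}$. The boundary terms are the only subtlety: at $j=1$ the quantity $s_1=0$, so $x_1'=-m t_1=-m\in\QQ$, which is harmless; at $j=r$ one meets $\alpha^{r}$ and $\alpha^{-r}$, which lie in $K_T=\bigoplus_{i=1}^r\QQ\cdot\alpha^{i-1}$ but must be rewritten, and here I would invoke that the whole span $\bigoplus\QQ\cdot x_i'$ actually equals the image under the isomorphism $T'\congpf\bigoplus\QQ\cdot x_i'$ of Remark \ref{rem:refereeremark} applied to $T'$ — i.e. it is exactly the set of values $(\sigma'.\gamma')$ for $\gamma'\in T'$ — so closure under $\beta$ is equivalent to $\beta$ (viewed via its transpose action on coordinates) preserving that image, which it does because $\beta$ acts $\QQ$-rationally on $T$ and on $\ell$ (trivially), hence on $T'=\ell'^\perp$ after noting $\beta$ fixes $\ell'$ up to the $\ell$-component, which is rational. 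That last, cleaner argument essentially reduces (ii) to: the $\QQ$-linear endomorphism of $T$ given by $\alpha+\alpha^{-1}$ descends to $T'=\ell'^\perp$ because it commutes with orthogonal projection, and then its transpose preserves $\bigoplus\QQ\cdot x_i'\subset k_{T'}$ by the functoriality in Remark \ref{rem:refereeremark}.

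For (i), once (ii) is in hand I would argue: $x_1'=-m\in\QQ^\times$, so $1\in\bigoplus\QQ\cdot x_i'$; applying (ii) repeatedly shows $\beta^j\in\bigoplus\QQ\cdot x_i'$ for all $j\ge 0$; since $K_T^0=\QQ[\beta]$ is spanned over $\QQ$ by $1,\beta,\dots,\beta^{r/2-1}$, this gives $K_T^0\subset\bigoplus\QQ\cdot x_i'$, and the inclusion $\bigoplus\QQ\cdot x_i'\hookrightarrow k_{T'}$ is Lemma \ref{lem:periodsofTprime}. The main obstacle I anticipate is handling the boundary/wrap-around terms honestly in the first, coordinate-based approach — keeping careful track of how $\alpha^{\pm r}$ re-expand — which is exactly why I would prefer to phrase the proof through the conceptual statement that multiplication by $\alpha+\alpha^{-1}\in K_T^0$ is a $\QQ$-linear self-map of $T$ compatible with the projection $T\oplus\QQ\ell\to T'$, sidestepping the explicit recurrence almost entirely and leaving only the routine verification that $\gamma\mapsto(\sigma'.\gamma')$ intertwines this self-map with multiplication by the same field element in $k_{T'}$.
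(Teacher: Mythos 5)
There is a genuine circularity in your plan. You propose to derive (i) from (ii), starting from $x_1'=-m\in\QQ^\ast$ and applying closure under $\beta=\alpha+\alpha^{-1}$ repeatedly; but your proof of (ii) at the boundary index $j=1$ is exactly where (i) is needed. Indeed $\beta\cdot x_1'=-m(\alpha+\alpha^{-1})$, and the membership $\alpha+\alpha^{-1}\in\bigoplus\QQ\cdot x_i'$ is an instance of (i) that does not follow from the three-term recurrence: the missing term $x_0'=a(\alpha^{-1}-\alpha)-m\alpha$ satisfies $x_0'+x_2'=-m\beta$, so deciding whether it lies in the span is \emph{equivalent} to deciding whether $\beta$ does. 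Calling this case ``harmless'' skips the essential point. The paper breaks the circle by proving (i) first and directly: for $f=\sum_{i=1}^r a_i\alpha^{i-1}\in K_T^0$ the condition $f=\bar f$ reads $\sum a_i(\alpha^{i-1}-\alpha^{1-i})=0$, whence $\sum a_ix_i'=-m\bar f=-mf$ and $f\in\bigoplus\QQ\cdot x_i'$. This identity, which is precisely where $m=(\ell.\ell')\in\QQ^\ast$ enters, is absent from your proposal and cannot be replaced by your induction. (Your interior recurrence $\beta x_j'=x_{j-1}'+x_{j+1}'$ and the reduction of $\alpha^{\pm r}$ via the minimal polynomial at $j=r$ do match the paper's computation for (ii).)

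Your preferred ``conceptual'' fallback is also not correct as stated. Extending $\beta$ to $T\oplus\QQ\cdot\ell$ by the identity on $\ell$ does \emph{not} multiply $\sigma'=a\sigma+b\bar\sigma+\ell$ by the scalar $\beta$ (it scales the $T$-part but fixes $\ell$), so $\gamma'\mapsto(\sigma'.\gamma')$ does not intertwine this endomorphism with multiplication by $\beta$ in $k_{T'}$. Writing $\gamma'=\gamma+c\ell$ one finds
$$\beta\cdot(\sigma'.\gamma')=(\sigma'.\beta(\gamma)+c\ell)+cd(\beta-1),$$
and absorbing the term $cd\,\beta$ back into $\bigoplus\QQ\cdot x_i'$ again requires $\beta\in\bigoplus\QQ\cdot x_i'$, i.e.\ (i). That the naive extension of an element of $K_T^0$ fails to act as a Hodge endomorphism on the other twistor fibres is exactly the phenomenon recorded in Section \ref{sec:EndoasHodge}; it is the reason the lemma is non-trivial and why the argument must pass through the explicit coordinates and the rationality of $m$ and $d$. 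To repair the proposal, insert the paper's direct verification of (i) before treating the $j=1$ case of (ii).
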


\begin{proof} As $0\ne x'_1\in\QQ$, the second assertion implies the first, but for greater clarity we state and prove them separately.

Any element $f\in K_T$ can be uniquely written as $f=\sum_{i=1}^ra_i\alpha^{i-1}$ with $a_i\in\QQ$.
As $|\alpha|=1$, the complex conjugate of $f$ is given by $\bar f=\sum_{i=1}^ra_i\alpha^{1-i}$.
Hence, $f\in K_T^0$ if and only if $\sum_{i=1}^r a_i(\alpha^{i-1}-\alpha^{1-i})=0$. 
In this case, one finds $\sum_{i=1}^ra_ix'_i=a\sum_{i=1}^ra_i(\alpha^{i-1}-\alpha^{1-i})-m\sum_{i=1}^ra_i\alpha^{1-i}=-m\bar f=-mf$, and,
therefore, $f\in \bigoplus\QQ\cdot x'_i$. Hence, $K_T^0\subset \bigoplus\QQ\cdot x'_i$.
\medskip

For any $f=2\sum_{i=1}^ra_i\alpha^{i-1}$, the decomposition in its real and imaginary
part is of the form $f=(1/2)(f+\bar f)+(1/2)(f-\bar f)=\sum_{i=1}^ra_i(\alpha^{i-1}+\alpha^{1-i})+
\sum_{i=1}^ra_i(\alpha^{i-1}-\alpha^{1-i})$. In particular, the elements $\alpha^{i-1}+\alpha^{1-i}$, $i=1,\ldots,r$, generate
$K_T^0$ (but, for dimension reasons, they are not linearly independent). 
Observe that $(\alpha+\alpha^{-1})^{i-1}=\alpha^{i-1}+\alpha^{1-i}+M$, where $M$ is a linear combination of
$\alpha^j+\alpha^{-j}$, $j=0,\ldots, i-2$. In particular, $\alpha+\alpha^{-1}$ is a primitive element of $K_T^0$.
Thus, by induction, in order to show that $K_T^0$ preserves $\bigoplus\QQ\cdot x_i'$,
it suffices to prove that multiplication with $\alpha+\alpha^{-1}$ does.

A computation yields that $(\alpha+\alpha^{-1})x_i'=x'_{i-1}+x'_{i+1}$ for $2\leq i\leq r-1$. 
For $i=1$ observe $(\alpha+\alpha^{-1}) x'_1=-m(\alpha+\alpha^{-1})\in K_T^0\subset \bigoplus\QQ x'_i$ 
using (i).
To deal with the case $i=r$, we write $\alpha^r=\sum_{i=1}^{r}c_i\alpha^{i-1}$ for some $c_i\in\QQ$ and, thus, $\alpha^{-r}=\sum c_i\alpha^{1-i}$.
Hence, $(\alpha+\alpha^{-1})x'_r=x'_{r-1}+a(\alpha^r-\alpha^{-r})-m\alpha^{-r}=x'_{r-1}+\sum_{i=1}^{r}c_ix'_i\in\bigoplus\QQ \cdot x'_i$.
\end{proof}

\begin{remark}
(i) We stress that both assumptions, $m=(\ell.\ell')\ne0$ and $m\in\ZZ$, are used in the proof of the lemma. Indeed,
the equation $a_ix'_i=-mf$ would otherwise not yield the desired inclusion $K_T^0\,\hookrightarrow\bigoplus\QQ\cdot x'_i$.
Similarly, one would not expect $K_T^0$ to be contained in $k_{T'}$ when $T'$ is not irreducible.
%\footnote{Can we find concrete examples for both statements?}

(ii) Although the proposition only establishes properties of the totally real field $K_T^0$, its proof uses the action of the CM field $K_T$ on $T$.
It is unclear whether also in the totally real case $K_T=K_T^0$ is a subfield of $k_{T'}$.
%Presumably, if $T$ has only RM, then $K_T=K_T^0$ is not a subfield of $k_{T'}$.\footnote{Find an example!}
\end{remark}

\begin{prop}\label{prop:FinalHodge}
Assume $T$ is an irreducible Hodge structure of K3 type with signature $(2,r-2)$ and
CM by $K_T$. Let $x'\in\PP^1_\ell\setminus S^1_\ell$ be a point in the twistor base
orthogonal to a  class $\ell'\in T\oplus\QQ\cdot\ell$ with $(\ell'.\ell')>0$

Then the corresponding Hodge structure on $T'=\ell'^\perp\subset T\oplus\QQ\cdot\ell$ is
an irreducible Hodge structure of K3 type with signature $(2,r-2)$ and CM.
The maximal totally real subfields of the CM fields $K_T$ and $K_{T'}$ coincide: $K_T^0=K_{T'}^0$.
\end{prop}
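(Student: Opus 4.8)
The plan is to combine the structural results already assembled: Lemma~\ref{lem:periodsofTprime} identifies the period field $k_{T'}$ together with the distinguished $\QQ$-subspace $\bigoplus\QQ\cdot x_i'\subset k_{T'}$, and Lemma~\ref{lem:KTOPLUS} shows that the totally real field $K_T^0=K_T\cap\RR$ sits inside this subspace and preserves it under multiplication. First I would recall from Remark~\ref{rem:irredm2=0} that the hypothesis $x'\notin S^1_\ell$ forces $T'$ to be irreducible of signature $(2,r-2)$, so the hypotheses of Lemma~\ref{lem:converseCMperiod} are in force. The role of $L$ in that lemma will be played by $K_T^0$: it is a subfield of $k_{T'}$ by Lemma~\ref{lem:KTOPLUS}(i), it is totally real hence contained in $\RR$, it preserves $\bigoplus\QQ\cdot x_i'$ by Lemma~\ref{lem:KTOPLUS}(ii), and $[K_T^0:\QQ]=r/2=(1/2)\dim_\QQ T=(1/2)\dim_\QQ T'$. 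Applying Lemma~\ref{lem:converseCMperiod} then yields that $T'$ has complex multiplication, so $K_{T'}$ is a CM field with $[K_{T'}:\QQ]=\dim_\QQ T'=r$, and in particular $K_{T'}^0\coloneqq K_{T'}\cap\RR$ has degree $r/2$ over $\QQ$.

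It remains to identify the two maximal totally real subfields. The inclusion $K_T^0\subset K_{T'}$ follows from tracing through the proof of Lemma~\ref{lem:converseCMperiod}: every $\lambda\in K_T^0$, being a real element of $k_{T'}$ that preserves $\bigoplus\QQ\cdot x_i'$, is realized as a Hodge endomorphism $\varphi$ of $T'$ with $\varphi(\sigma')=\lambda\cdot\sigma'$, hence $K_T^0\hookrightarrow K_{T'}$ as subfields of $\CC$. Since $K_T^0\subset\RR$, this exhibits $K_T^0$ as a totally real subfield of $K_{T'}$, so $K_T^0\subset K_{T'}\cap\RR=K_{T'}^0$. But both fields have degree $r/2$ over $\QQ$, whence $K_T^0=K_{T'}^0$. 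I would write this final degree comparison explicitly, since it is the only place the equality (as opposed to mere inclusion) is pinned down.

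The main obstacle — and it is already largely dispatched by the preceding lemmas — is the verification that $K_T^0$ genuinely embeds into $k_{T'}$ and acts by $\QQ$-rational matrices on $\bigoplus\QQ\cdot x_i'$; this is where the integrality $m=(\ell.\ell')\in\ZZ$ and non-vanishing $m\neq 0$ are essential, as flagged in the remark following Lemma~\ref{lem:KTOPLUS}, because the identity $\sum a_i x_i'=-mf$ is what converts an abstract relation in $K_T^0$ into membership in the coordinate subspace. With Lemma~\ref{lem:KTOPLUS} in hand, the remaining argument is purely formal: irreducibility of $T'$, one citation of Lemma~\ref{lem:converseCMperiod}, and a dimension count. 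One should also note that Lemma~\ref{lem:converseCMperiod} delivers $\dim_{K_{T'}}T'=1$ (using $L=K_T^0\subset\RR$ and the fact that a totally real field cannot have $\dim_{K}T=1$), which is exactly the CM condition; the totally real subfield statement then drops out as above.
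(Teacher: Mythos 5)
Your proposal is correct and follows essentially the same route as the paper: irreducibility of $T'$ away from the equator, then Lemma~\ref{lem:KTOPLUS} feeding into Lemma~\ref{lem:converseCMperiod} with $L=K_T^0$, and a degree count $[K_T^0:\QQ]=r/2=[K_{T'}^0:\QQ]$ to upgrade the inclusion $K_T^0\subset K_{T'}^0$ to an equality. The paper's own proof is just a terser version of this (citing Proposition~\ref{prop:Picardjump} for irreducibility and leaving the final identification of the totally real subfields implicit in ``yields the result''), so your added explicitness is welcome but not a different argument.
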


\begin{proof}
The irreducibility of $T'$ follows from Proposition \ref{prop:Picardjump}.
 Lemma \ref{lem:KTOPLUS} shows that $K_T^0$ is contained in $k_{T'}\cap \RR$ and that it preserves
the subspace $\bigoplus\QQ\cdot x'_i\subset k_{T'}$. As $2\,[K_T^0:\QQ]=[K_T:\QQ]=\dim_\QQ T=\dim_\QQ T'$, Lemma \ref{lem:converseCMperiod} applies and yields the result.
\end{proof}

The assumption that only fibres away from the equator are allowed is not an artefact of our technique
unless we are in the case $r=\dim_\QQ T=2$.

\begin{lem}\label{lem:equatorbad}
Assume $\dim_\QQ T>2$. Then a Hodge structure $T'=\ell'^\perp$ with $(\ell'.\ell')>0$ for which
the corresponding point $x'$ is contained in the equator $S^1_\ell$ is not of CM type.
In fact, in the case that $T'$ is not irreducible, also the minimal Hodge structure
of K3 type $T''\subset T'$ with $T'^{2,0}\subset T''_\CC$  is not of CM type.
\end{lem}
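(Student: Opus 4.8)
The plan is to isolate the one feature that distinguishes points of the equator, namely that the rational class $\ell$ then lies in the positive plane of the corresponding fibre, and to play it against the rigidity of CM Hodge structures. First I would record that $x'\in S^1_\ell$ means, by the very definition of the equator, $\ell\in P_{x'}$; since $x'$ is orthogonal to $\ell'$ we also have $P_{x'}\subseteq\ell'^\perp$, hence $0\ne\ell\in T'\cap P_{x'}$ (recall $(\ell.\ell)>0$). Passing to the minimal sub-Hodge structure $T''\subseteq T'$ with $T'^{2,0}\subseteq T''_\CC$ — so that $T''=T'$ exactly when $T'$ is irreducible — one has $T'^{2,0}\oplus T'^{0,2}\subseteq T''_\CC$, whence $P_{T'}=(T'^{2,0}\oplus T'^{0,2})\cap T'_\RR\subseteq T''_\RR$, and $P_{T''}=P_{T'}$ since the $(2,0)$-parts of $T'$ and $T''$ coincide. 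In particular $T''$ is irreducible and $0\ne\ell\in T''\cap P_{T''}$.

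The key observation is that for an irreducible Hodge structure $N$ of K3 type with CM, the subspace $N\cap P_N$ is stable under $K_N$. Indeed, writing $v=\mu\sigma_N+\bar\mu\bar\sigma_N\in P_N$ and letting $\varphi\in K_N$ act on $\sigma_N$ by the scalar also denoted $\varphi$, one gets $\varphi(v)=(\mu\varphi)\sigma_N+\overline{(\mu\varphi)}\bar\sigma_N\in P_N$, while $\varphi(v)\in N$ whenever $v\in N$. Thus $N\cap P_N$ is a $K_N$-submodule of the one-dimensional $K_N$-vector space $N$, so it is either $0$ or all of $N$; since it lies inside $P_N$ its $\QQ$-dimension is at most $2$, and therefore $N\cap P_N\ne0$ forces $\dim_\QQ N\le2$.

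Now assume, for contradiction, that $T''$ has CM. Since $0\ne\ell\in T''\cap P_{T''}$ and $T''$ is irreducible, the observation gives $\dim_\QQ T''\le 2$, hence $\dim_\QQ T''=2$. If $T'$ is irreducible this already contradicts $\dim_\QQ T'=\dim_\QQ T>2$, and we are done. In the remaining case I would derive a contradiction by exhibiting a nonzero class in $T\cap P_T$, which is impossible by the observation applied to the CM Hodge structure $T$ (using $\dim_\QQ T>2$). To do so, write $T''=\QQ\cdot\ell\oplus\QQ\cdot f$ with $f\in\ell'^\perp\cap T\subseteq T$ (one clears the $\ell$-component of a second basis vector) and scale a generator $\sigma''$ of $T'^{2,0}$ so that ${\rm Re}(\sigma'')=\ell$, which is possible since $\ell\in P_{x'}$; a short computation from $(\sigma''.\sigma'')=0$ and $\ell\perp P_T$ shows $\sigma''=\ell+iw$ with $0\ne w\in P_T$. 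Then $iw=\sigma''-\ell\in\CC\cdot\ell\oplus\CC\cdot f$, and projecting off the $\CC\cdot\ell$-summand inside $T_\CC\oplus\CC\cdot\ell$ gives $iw=\nu f$ for some $\nu\in\CC^\times$; since $f$ and $w$ are real and nonzero, this forces $\RR f=\RR w\subseteq P_T$, so $0\ne f\in T\cap P_T$, the desired contradiction. The step that really needs care is this reducible case — a priori $\dim_\QQ T''$ could equal $2$, and a two-dimensional K3-type Hodge structure always has CM — and it is handled by descending the obstruction from $T''$ back to $T$ itself; everything else is formal once the submodule observation is in place.
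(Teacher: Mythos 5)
Your argument is correct, and it splits naturally into a half that coincides with the paper and a half that does not. The key observation you isolate --- that for an irreducible CM Hodge structure $N$ the rational part $N\cap P_N$ of the positive plane is a $K_N$-submodule of the one-dimensional $K_N$-vector space $N$, hence is $0$ or all of $N$, and so $N\cap P_N\ne 0$ forces $\dim_\QQ N\le 2$ --- is precisely the paper's argument for the irreducible case ($K_{T'}\cdot\ell=T'$ together with the fact that $K_{T'}$ preserves $P_{T'}$), just packaged more formally. Where you genuinely diverge is the reducible case. The paper stays inside its period-field machinery: it notes $k_{T'}=k_{T''}$, adapts the computation of Section \ref{sec:TwCM} to $m=(\ell.\ell')=0$ so that $k_{T'}$ is generated by $d$ and the purely imaginary quantities $a(\alpha^{i-1}-\alpha^{1-i})$, and concludes $[k_{T''}:\QQ]>2$ from the fact that the $\alpha^{j}-\alpha^{-j}$ span a space of dimension $r/2\ge 2$; this contradicts $\dim_\QQ T''=2$. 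You instead argue structurally: normalizing $\sigma''=\ell+iw$ with $0\ne w\in P_T$ and comparing with $T''_\CC=\CC\cdot\ell\oplus\CC\cdot f$ forces $0\ne f\in T\cap P_T$, which your own submodule observation, now applied to the CM structure $T$ with $\dim_\QQ T>2$, rules out. Your route is more elementary --- it avoids the explicit CM basis $(\gamma_i)$ and the primitive element $\alpha$ entirely and reuses a single lemma twice --- while the paper's computation buys finer information, namely an explicit description of $k_{T'}$ for equator fibres showing $[k_{T'}:\QQ]\ge 1+r/2$. Both arguments invoke the standing hypothesis that $T$ itself has CM only in this second step, so the logical dependencies are the same.
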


\begin{proof}
Observe that for such a Hodge structure, $\ell$ is contained in $P_{T'}$.
If $T'$ were of  CM type, then its endomorphism field $K_{T'}$ 
would satisfy $K_{T'}\cdot\ell=T'$. However,
all elements of $K_{T'}$ act as endomorphisms of the Hodge
structure and, in particular, preserve $P_{T'}$. This only leaves the possibility 
that $P_{T'}=T'_\RR$, which yields the contradiction $\dim_\QQ T=\dim_\QQ T'=2$.

If $T'$ is not irreducible, then the same arguments apply to $T''\subset T'$ and prove
$\dim_\QQ T''=2$. As $T'=T''\oplus T''^\perp$ and $T''^\perp$  is pure of type $(1,1)$, the 
residue fields $k_{T'}$ and $k_{T''}$ of the two Hodge structures $T'$ and $T''$ coincide.
To get a contradiction, it is enough to show that $[k_{T''}:\QQ]=[k_{T'}:\QQ]>2$. For this we 
adapt the approach explained before to the case $m=0$. 

The classes
$\ell$ and $\gamma'_i\coloneqq \gamma_i-((\ell'.\gamma_i)/(\ell'.\ell'))\cdot\ell'$, $i=1,\ldots, r$,
generate $T'$. Hence, the period field $k_{T'}$  is generated by $(\sigma'.\ell)=d\in\ZZ$ and $(\sigma'.\gamma_i')=a(\alpha^{i-1}-\alpha^{1-i})$, where we use that
$a+b=0$ and $a=\sqrt{d/(2(\sigma.\bar\sigma))}\in \RR$ under the assumption $m=(\ell.\ell')=0$.
However, the elements $\alpha^j-\alpha^{-j}$ generate the subspace of all purely imaginary elements in $K_T$,
which is of dimension $r/2$. Thus, the elements $a(\alpha^{i-1}-\alpha^{1-i})=(\sigma'.\gamma_i')\in k_{T''}$,
which are all purely imaginary, already span a sub-vector space of dimension $r/2\geq2$. Therefore, one finds
the contradiction $[k_{T'}:\QQ]>2$.
\end{proof}
%%%%%%%%%%%%%%%%%%%%%%%%%

\subsection{CM fields of twistor fibres}
In the situation of Proposition \ref{prop:FinalHodge}, we have $K_T^0=K_{T'}^0$, but the
totally imaginary quadratic extensions of this field $$K_T^0\subset K_T\text{  and }K_{T'}^0\subset K_{T'}$$
will usually be distinct. This can be made precise as follows. As before, pick a primitive element
of norm one of $K_T$ and write $K_T=\QQ(\alpha)$. Then the maximal totally real
field is $K_T^0=\QQ(\alpha+\alpha^{-1})$, see the arguments in the proof of Lemma \ref{lem:KTOPLUS}.

\begin{cor}\label{cor:CMfieldreconstruction}
Under the assumption of Proposition \ref{prop:FinalHodge} and additionally assuming
$x'\ne x,\bar x$, the endomorphism field $K_{T'}$
of the CM Hodge structure $T'$ is the quadratic extension of $K_T^0=K_{T'}^0$ described by
\begin{equation}\label{eqn:Extn}
X^2+\gamma X+\delta=0
\end{equation}
with $\gamma\coloneqq m(\alpha+\alpha^{-1})\in K_T^0$ and $\delta\coloneqq m^2-d(\sigma.\bar\sigma)^{-1}(\alpha^2+\alpha^{-2}-2)\in K_T^0$.
Here, $d=(\ell.\ell)$, $m=(\ell.\ell')$ and $\sigma\in T^{2,0}$ is chosen such that $(\sigma.\ell')=1$.
\end{cor}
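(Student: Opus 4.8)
The strategy is to identify $K_{T'}$ as a concrete subfield of $\CC$ and then find an explicit element that generates it over $K_T^0$, whose minimal polynomial is the one in (\ref{eqn:Extn}). By Proposition \ref{prop:FinalHodge} we already know $T'$ has CM and $K_{T'}^0 = K_T^0 = \QQ(\alpha+\alpha^{-1})$, so $K_{T'}$ is a quadratic extension of $K_T^0$ and it suffices to produce one element $\beta \in K_{T'}\setminus K_T^0$ and compute $\beta + \bar\beta$ and $\beta\bar\beta$ in terms of $\alpha$, $m$, $d$, $(\sigma.\bar\sigma)$. The natural candidate is the endomorphism of $T'$ induced by $\alpha$ itself: recall from the proof of Lemma \ref{lem:KTOPLUS} that multiplication by $\alpha+\alpha^{-1}$ on $k_{T'}$ preserves $\bigoplus\QQ\cdot x_i'$ and hence (via Lemma \ref{lem:converseCMperiod} and Remark \ref{rem:refereeremark}) lies in $K_{T'}$; the finer point is that $\alpha$ and $\alpha^{-1}$ separately do \emph{not} preserve this subspace, but a suitable combination of $\alpha$ with the multiplication-by-$\ell$ data does.

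First I would make the embedding $K_{T'}\hookrightarrow\CC$ explicit. Write $x' = [\sigma' = a\sigma + b\bar\sigma + \ell]$ with $a+b+m=0$ and, from (\ref{eqn:quad}), $2ab(\sigma.\bar\sigma) + d = 0$. Then $\sigma'$ generates $T'^{2,0}$, and an endomorphism $\psi \in K_{T'}$ corresponds to the scalar $\mu$ with $\psi(\sigma') = \mu\cdot\sigma'$. The point is to find a $\QQ$-linear $\psi\colon T' \to T'$ whose $\CC$-linear extension sends $\sigma'$ to a scalar multiple of itself by a quantity visibly not in $\RR$ (hence not in $K_T^0$). Because $\alpha(\sigma) = \alpha\sigma$ and $\alpha(\bar\sigma) = \alpha^{-1}\bar\sigma$ (using $|\alpha|=1$), and $\alpha$ fixes $\ell$, applying $\alpha$ to $a\sigma + b\bar\sigma + \ell$ gives $a\alpha\sigma + b\alpha^{-1}\bar\sigma + \ell$, which is not proportional to $\sigma'$. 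The correction is to compose with the orthogonal projection to $T' = \ell'^\perp$, or equivalently to add the right multiple of $\ell$: one checks that the map $\gamma \mapsto \alpha(\gamma) + (\text{linear functional of }\gamma)\cdot\ell$, restricted to $T'$, does land in $K_{T'}$, and that on $\sigma'$ it acts by the scalar $a\alpha + b\alpha^{-1}$ shifted by the contribution of $\ell$. Working this out, the relevant scalar $\beta$ satisfies $\beta + \bar\beta = m(\alpha+\alpha^{-1})$ (the trace being forced by $a+b = -m$) and $\beta\bar\beta = m^2 - d(\sigma.\bar\sigma)^{-1}(\alpha^2 + \alpha^{-2} - 2)$, where the second expression comes from eliminating $ab$ via $2ab(\sigma.\bar\sigma) = -d$ together with $(a-b)^2 = (a+b)^2 - 4ab = m^2 + 2d(\sigma.\bar\sigma)^{-1}$.

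Concretely I would organize the computation as follows. Set $\beta\coloneqq a(\alpha-\alpha^{-1}) - m\alpha^{-1}$, which is exactly the "generic coordinate ratio" $x_i'/x_1'$ pattern from Lemma \ref{lem:periodsofTprime} with the role of $\alpha^{i-1}$ played by a formal $\alpha$; indeed $x_2' = \beta\cdot x_1'$ up to the rational factor $x_1' = -m \in \QQ$, so $\beta \in k_{T'} = K_{T'}$ by Lemma \ref{lem:periodsofTprime} and Proposition \ref{prop:CMKk}. Then $\bar\beta = a(\alpha^{-1}-\alpha) - m\alpha = -\,a(\alpha - \alpha^{-1}) - m\alpha$ (using that $a \in \RR$ when... — careful: $a$ need not be real when $m\ne 0$), so in general I would instead compute $\beta + \bar\beta$ and $\beta\bar\beta$ directly from $\beta = a\alpha + b\alpha^{-1} + (\text{rational})$ with $\bar\beta = \bar a\alpha^{-1} + \bar b\alpha + (\text{same rational})$, and use that $\bar a = b$, $\bar b = a$ (since $\overline{a\sigma + b\bar\sigma} = \bar a\bar\sigma + \bar b\sigma$ must be proportional to $\overline{\sigma'}$, forcing the complex conjugation on the $(a,b)$-coordinates to swap them up to the real scalar $\ell$-part). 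This gives $\beta + \bar\beta = (a+b)(\alpha+\alpha^{-1}) + 2(\text{rational part}) = -m(\alpha+\alpha^{-1}) + \text{const}$, and after tracking the additive constant carefully one lands on $\gamma = m(\alpha+\alpha^{-1})$ for the polynomial $X^2 + \gamma X + \delta$ up to an overall sign/shift I would pin down. The product $\beta\bar\beta$ expands to $|a|^2 + |b|^2 + \bar a b\,\alpha^2 + a\bar b\,\alpha^{-2} + (\text{rational})$; substituting $ab + \bar a\bar b$, $|a|^2 + |b|^2$ in terms of $m$ and $d(\sigma.\bar\sigma)^{-1}$ via the two defining relations for $x'$ yields $\delta$.

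\textbf{Main obstacle.} The genuine difficulty is purely bookkeeping: correctly determining how complex conjugation acts on the coordinates $(a,b)$ of $\sigma'$ relative to the fixed real class $\ell$, and then shepherding the rational (i.e. $\ell$-contribution) constants through the trace and norm computations so that the answer comes out in the stated normalized form with $\sigma$ chosen so that $(\sigma.\ell') = 1$. There is no conceptual gap — everything reduces to the two scalar equations $a + b + m = 0$ and $2ab(\sigma.\bar\sigma) + d = 0$ plus the identification $K_{T'} = k_{T'} = \QQ(\beta)$ from Proposition \ref{prop:CMKk} and Lemma \ref{lem:periodsofTprime} — but the algebra is error-prone, and in particular one must double-check that $\gamma, \delta$ as written actually lie in $K_T^0$ (clear: $\alpha + \alpha^{-1}$ and $\alpha^2 + \alpha^{-2}$ are both fixed by conjugation) and that the discriminant $\gamma^2 - 4\delta$ is totally negative, as it must be for a CM extension, which serves as a useful consistency check on the computation.
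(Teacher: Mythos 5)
Your approach is essentially the paper's: the generator you single out, $\beta=a\alpha+b\alpha^{-1}$, is exactly the element $x_2'$ that the paper uses, justified in the same way (Lemma \ref{lem:periodsofTprime} together with the observation from the proof of Lemma \ref{lem:KTOPLUS} that $k_{T'}=\bigoplus\QQ\cdot x_i'$ is generated over $K_T^0$ by $x_1'\in\QQ$ and $x_2'$), and everything then reduces to the two relations $a+b+m=0$ and $2ab(\sigma.\bar\sigma)+d=0$. The only cosmetic difference is that the paper substitutes $\beta$ directly into $X^2+\gamma X+\delta$ instead of computing trace and norm. Two pieces of the bookkeeping you left open should be settled as follows. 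First, $a$ and $b$ are both real: they are the two roots of $t^2+mt-d/(2(\sigma.\bar\sigma))=0$, whose discriminant $m^2+2d(\sigma.\bar\sigma)^{-1}$ is positive; hence $\bar\beta=a\alpha^{-1}+b\alpha$, whereas your tentative rule $\bar a=b$, $\bar b=a$ would force $\bar\beta=\beta$, i.e.\ $\beta$ real, which is wrong. Second, carrying out either computation gives $\beta+\bar\beta=(a+b)(\alpha+\alpha^{-1})=-m(\alpha+\alpha^{-1})$, consistent with $\gamma$, but $\beta\bar\beta=m^2+ab(\alpha^2+\alpha^{-2}-2)=m^2-(d/2)(\sigma.\bar\sigma)^{-1}(\alpha^2+\alpha^{-2}-2)$, so the $d$-term is half of what is displayed in (\ref{eqn:Extn}); the same factor comes out of the paper's own direct substitution, so this appears to be a typographical slip in the statement that your proposed discriminant check would indeed catch. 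Since $\alpha^2+\alpha^{-2}-2=(\alpha-\alpha^{-1})^2$ is totally negative for $|\alpha|=1$, $\alpha\neq\pm1$, the extension is totally imaginary with either normalization.
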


\begin{proof} 
As before, we represent $x'$ by $a\sigma+b\bar\sigma+\ell$. According to
(\ref{eqn:quad}) and (\ref{eqn:abm}), one has $2ab(\sigma.\bar\sigma)+d=0$ and $a+b+m=0$.
From the latter we conclude
 $$K_{T'}=K_{T}^0(a\alpha+b\alpha^{-1}).$$
Indeed, by Lemma  \ref{lem:periodsofTprime} $x_2'= a(\alpha-\alpha^{-1})-m\alpha^{-1}=a\alpha+(b+m)\alpha^{-1}-m\alpha^{-1}=a\alpha+b\alpha^{-1}$ and by the arguments in the proof of Lemma \ref{lem:KTOPLUS}
we know that $\bigoplus \QQ\cdot x_i'=k_{T'}$ as a $K_T^0$-vector space is generated by $x_1' \in \QQ$ and $x_2'$.

Therefore, it suffices to show that $a\alpha+b\alpha^{-1}$ satisfies (\ref{eqn:Extn}). This follows from
a computation using $(a\alpha+b\alpha^{-1})^2=a^2\alpha^2+b^2\alpha^{-2}-d(\sigma.\bar\sigma)^{-1}$
and $m(\alpha+\alpha^{-1})(a\alpha+b\alpha^{-1})=-(a+b)(a\alpha^2+b\alpha^{-2}+(a+b))$.
\end{proof}

It is an exercise to check directly that (\ref{eqn:Extn}) does indeed define a totally imaginary quadratic extension of $K_T^0$, as we know it should. Indeed, $|\alpha|=1$ and $\alpha\ne1$ imply $\alpha^2+\alpha^{-2}<2$
and, therefore, $\gamma^2<4\delta$.

\begin{remark} In (\ref{eqn:Extn}) only $m$ seems to depend on the actual class
$\ell'$ or the point $x'$.
But implicitly $\ell'$ is involved once more via the condition $(\sigma.\ell')=1$.
There are of course many classes $\ell'_1,\ell'_2$ with the same $m$, i.e.\
$(\ell.\ell'_1)=(\ell.\ell'_2)$. However, if $\sigma$ can be chosen such that
for both $(\sigma.\ell'_i)=1$, $i=1,2$, then by virtue of the irreducibility of the Hodge structure
$T$ one has $\ell'_1-\ell'_2\in \QQ\cdot\ell$ and $(\ell.\ell'_1)=(\ell.\ell'_2)$ would in fact show $\ell'_1=\ell'_2$. Hence, (\ref{eqn:Extn}) will be the same only for conjugate pairs
of points $x',\bar x'$. Of course, the CM fields of two even non-conjugate points
can be isomorphic without (\ref{eqn:Extn}) being identical.
\end{remark}
%In Section \ref{} we shall briefly discuss the question how often a fixed CM field occurs in the twistor family.

%%%%%%%%%%%%%
\section{Period values}\label{sec:PV}

So far we have encoded a K3 Hodge structure $T$ by the line $T^{2,0}\subset T_\CC$
or, equivalently, by its period point $x\in \PP(T_\CC)$. In the applications to K3 surfaces with CM, which
are always defined over $\bar\QQ$, a finer structure is present, namely a generator $\sigma\in T^{2,0}$ that
is unique up to scalars in $\bar\QQ^\ast$. In this section we formalize the situation. We introduce the period
value and explain how it behaves in a twistor family.

\subsection{Period value} Let $T$ be an irreducible Hodge structure of K3 type with signature $(2,r-2)$
and let  $\Sigma\subset T^{2,0}$ be a $\bar\QQ$-line, i.e.\ a $\bar\QQ$-linear subspace of dimension one. Then we define the \emph{period value}
$$r_{{\scriptscriptstyle T,\Sigma}}\coloneqq (\sigma.\gamma)\in \CC^\ast/\bar\QQ^\ast.$$
Here, $0\ne\sigma\in\Sigma$ and $0\ne\gamma\in T$ are chosen arbitrarily.

\begin{lem}\label{lem:PVindep}
If $T$ has complex multiplication, then
$r_{\scriptscriptstyle T,\Sigma}\in\CC^\ast/\bar\QQ^\ast$ is well-defined,
i.e.\ it is independent of the choices of $\sigma\in \Sigma$ and $\gamma\in T$.
\end{lem}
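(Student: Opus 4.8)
The claim is that for a CM Hodge structure $T$, the period value $r_{\scriptscriptstyle T,\Sigma}=(\sigma.\gamma)\in\CC^\ast/\bar\QQ^\ast$ does not depend on the choice of generator $0\ne\sigma\in\Sigma$ or $0\ne\gamma\in T$. The dependence on $\sigma$ is immediate and requires no hypothesis: any other generator of the $\bar\QQ$-line $\Sigma$ is $\lambda\sigma$ with $\lambda\in\bar\QQ^\ast$, so $(\lambda\sigma.\gamma)=\lambda(\sigma.\gamma)$ differs from $(\sigma.\gamma)$ by an element of $\bar\QQ^\ast$, hence defines the same class in $\CC^\ast/\bar\QQ^\ast$. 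So the content is the independence of $\gamma$, and this is exactly where CM is used.

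\textbf{Key step.} Fix $0\ne\sigma\in\Sigma$ and two nonzero classes $\gamma,\gamma'\in T$; I must show $(\sigma.\gamma)/(\sigma.\gamma')\in\bar\QQ^\ast$. First, by Lemma~\ref{lem:irredHS1} (irreducibility of $T$, which holds since $T$ has CM) both $(\sigma.\gamma)$ and $(\sigma.\gamma')$ are nonzero, so the ratio makes sense. Now invoke the structure of $K_T$: since $T$ has CM, $\dim_{K_T}T=1$, so $T$ is a one-dimensional $K_T$-vector space and there is some $f\in K_T$ with $\gamma=f(\gamma')$ (here $f$ is the Hodge endomorphism, acting $\QQ$-linearly on $T$). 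Writing $f$ also for its image under $K_T\hookrightarrow\CC$, i.e.\ $f(\sigma)=f\cdot\sigma$, and using that the adjoint $f'$ of $f$ is again a Hodge endomorphism corresponding to $\bar f\in\CC$ (Remark~\ref{rem:Hodgeadjoint}(i)), I compute
\[
(\sigma.\gamma)=(\sigma.f(\gamma'))=(f'(\sigma).\gamma')=\bar f\cdot(\sigma.\gamma').
\]
Therefore $(\sigma.\gamma)/(\sigma.\gamma')=\bar f$. Since $K_T$ is a number field, $f\in\bar\QQ$, hence $\bar f\in\bar\QQ$ as well (the Galois conjugate of an algebraic number is algebraic), and $\bar f\ne 0$ because $f\ne 0$ and $K_T$ is a field. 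Thus $(\sigma.\gamma)$ and $(\sigma.\gamma')$ differ by the element $\bar f\in\bar\QQ^\ast$, so they represent the same class in $\CC^\ast/\bar\QQ^\ast$.

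\textbf{Main obstacle.} There is essentially no obstacle; the only subtlety is making sure one invokes $\dim_{K_T}T=1$ (and not merely that $K_T$ is a CM field) to get transitivity of the $K_T$-action on nonzero vectors, since with $\dim_{K_T}T>1$ two arbitrary nonzero classes need not be related by a single endomorphism and the statement would genuinely fail. One should also note that irreducibility—needed for the nonvanishing of the pairings and for the adjoint computation in Remark~\ref{rem:Hodgeadjoint}—is automatic here: a CM Hodge structure of K3 type is irreducible by definition of the setup in Section~\ref{sec:EndoT}. With those points in place the argument is the two-line computation above.
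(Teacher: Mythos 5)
Your proof is correct and follows essentially the same route as the paper: both use $\dim_{K_T}T=1$ to write one transcendental class as the image of the other under an element of $K_T$, then move that endomorphism across the pairing via its adjoint (complex conjugation in $K_T\subset\bar\QQ$) to see that the two pairings differ by an algebraic scalar. The paper merely makes the element of $K_T$ explicit as a polynomial $\sum a_i\alpha^i$ in a primitive norm-one generator, whereas you cite Remark~\ref{rem:Hodgeadjoint}(i) abstractly; the content is identical.
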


\begin{proof}
Indeed, for $\lambda\in\bar\QQ^\ast$ one clearly has $(\lambda\cdot\sigma.\gamma)\equiv(\sigma.\gamma)$
in $\CC^\ast/\bar\QQ^\ast$. To prove independence of $\gamma$, choose a primitive element 
 $\alpha\in K$ of norm one of the CM field $K$ of $T$.
Then any other element $\gamma'\in T$ can be written as $\gamma'=(\sum a_i\alpha^{i})(\gamma)$ with $a_i\in \QQ$. Then $(\sigma.\gamma')=\sum a_i\cdot (\alpha^{-i}(\sigma).\gamma)=(\sum a_i\alpha^{-i})\cdot(\sigma.\gamma)\equiv(\sigma.\gamma)\in\CC^\ast/\bar\QQ^\ast$.
\end{proof}

Clearly, any two $\bar\QQ$-lines $\Sigma_1,\Sigma_2\subset T^{2,0}$ differ by some complex scalar
$\lambda\in\CC^\ast$, i.e.\ $\Sigma_2=\lambda\cdot\Sigma_1$. The effect on the period value is expressed by
$$r_{ \scriptscriptstyle T,\lambda\cdot\Sigma}=\lambda\cdot r_{\scriptscriptstyle T,\Sigma}.$$

In the following we will often write $ r_\sigma\coloneqq r_{\scriptscriptstyle T,\sigma}\coloneqq r_{\scriptscriptstyle T,\bar\QQ\cdot\sigma}$.

\begin{lem}\label{lem:r-1sigma}
 If $T$ has complex multiplication and $0\ne\sigma\in T^{2,0}$, then 
 $$r_\sigma^{-1}\cdot\sigma\in T\otimes\bar\QQ\text{ and } (\sigma.\bar\sigma)\equiv r_\sigma\cdot\bar r_\sigma \text{ in }\CC^\ast/\bar\QQ^\ast.$$
\end{lem}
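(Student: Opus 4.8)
The plan is to use the explicit basis adapted to the CM structure, exactly as in the proof of Lemma~\ref{lem:FirstCMkK}. Pick a primitive element $\alpha$ of norm one, so $K_T=\QQ(\alpha)$ with $\alpha\bar\alpha=1$, and fix any $0\ne\gamma_1\in T$. Then $\gamma_i\coloneqq\alpha^{1-i}(\gamma_1)$, $i=1,\ldots,r$, form a $\QQ$-basis of $T$, and since $\alpha$ is an isometry one computes $(\sigma.\gamma_i)=(\alpha^{i-1}(\sigma).\gamma_1)=\alpha^{i-1}\cdot(\sigma.\gamma_1)$. By definition $r_\sigma\equiv(\sigma.\gamma_1)$ in $\CC^\ast/\bar\QQ^\ast$, and since each $\alpha^{i-1}\in\bar\QQ^\ast$ we also have $(\sigma.\gamma_i)\equiv r_\sigma$ for every $i$. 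Hence, writing $\sigma$ in coordinates via the isomorphism $T_\CC\congpf\CC^r$, $\gamma\mapsto((\gamma.\gamma_i))$, the vector of $\sigma$ is $(\sigma.\gamma_1)\cdot(1,\alpha,\ldots,\alpha^{r-1})$, so $r_\sigma^{-1}\sigma$ has coordinate vector $\lambda\cdot(1,\alpha,\ldots,\alpha^{r-1})$ for some $\lambda\in\bar\QQ^\ast$ (the precise representative of $(\sigma.\gamma_1)/r_\sigma$), which lies in $\bar\QQ^r$. Transporting back under the isomorphism (which is defined over $\QQ$, hence over $\bar\QQ$) gives $r_\sigma^{-1}\sigma\in T\otimes\bar\QQ$, the first claim.

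For the second claim, apply the bilinear form to $\sigma$ and $\bar\sigma$. Writing $\tau\coloneqq r_\sigma^{-1}\sigma\in T\otimes\bar\QQ$, by part one $\bar\tau=\bar r_\sigma^{-1}\bar\sigma$ also lies in $T\otimes\bar\QQ$ (complex conjugation sends $T\otimes\bar\QQ$ to itself, since $T$ is defined over $\QQ$). Therefore $(\tau.\bar\tau)\in\bar\QQ$, because the form $(~.~)$ is defined over $\QQ$ and both arguments have $\bar\QQ$-coordinates. But $(\tau.\bar\tau)=r_\sigma^{-1}\bar r_\sigma^{-1}(\sigma.\bar\sigma)$, so $(\sigma.\bar\sigma)=r_\sigma\bar r_\sigma\cdot(\tau.\bar\tau)\equiv r_\sigma\cdot\bar r_\sigma$ in $\CC^\ast/\bar\QQ^\ast$, as desired. (One should note $(\sigma.\bar\sigma)>0$ and $r_\sigma\ne0$, so all quantities are genuinely in $\CC^\ast$ and the statement in $\CC^\ast/\bar\QQ^\ast$ makes sense.)

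The only point requiring a little care — and the one I would flag as the main obstacle — is keeping track of the well-definedness: $r_\sigma$ is only a class in $\CC^\ast/\bar\QQ^\ast$, so statements like ``$r_\sigma^{-1}\sigma\in T\otimes\bar\QQ$'' must be read as ``for any/some choice of representative $\sigma$ and of a complex number in the class $r_\sigma$, the product lies in $T\otimes\bar\QQ$'', and one should check this does not depend on the choices. This is immediate here because two representatives of $r_\sigma$ differ by an element of $\bar\QQ^\ast$, which preserves $T\otimes\bar\QQ$; Lemma~\ref{lem:PVindep} already guarantees $r_\sigma$ is well-defined as a class, so no new subtlety arises. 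Everything else is the routine CM-basis computation already carried out in Lemma~\ref{lem:FirstCMkK}, now tracked modulo $\bar\QQ^\ast$ instead of as honest complex numbers.
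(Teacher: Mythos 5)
Your proposal is correct and follows essentially the paper's own route: the paper's one-line argument is that $(r_\sigma^{-1}\cdot\sigma.\gamma)\in\bar\QQ$ for all $\gamma\in T$ (which is exactly the well-definedness statement of Lemma~\ref{lem:PVindep} that you re-derive via the explicit CM basis $\gamma_i=\alpha^{1-i}(\gamma_1)$), followed by the same deduction of the second claim from the first. Your version merely spells out the nondegeneracy/descent step and the choice-of-representative bookkeeping that the paper leaves implicit.
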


\begin{proof}
For the first assertion observe that for all $\gamma\in T$ one has $(r_\sigma^{-1}\cdot\sigma.\gamma)\in\bar\QQ$. The second assertion follows from the first.
\end{proof}

%%%%%%%%%%%%%%%%
\subsection{Period value of twistor fibres}\label{sec:PVTW}
We are using the notation of Section \ref{sec:TwCM}. In particular, $T$ is assumed to have complex multiplication.

 Let $x'\in \PP^1_\ell\setminus S^1_\ell$ be orthogonal to some fixed $\ell'\in T\oplus \QQ\cdot\ell$ with $(\ell'.\ell')>0$. Consider the induced natural irreducible Hodge structure on $T'\coloneqq \ell'^\perp\subset T\oplus\QQ\cdot\ell$ which has again signature $(2,r-2)$.

\begin{prop}
Assume that $x'$ is represented by $\sigma'=a\sigma+b\bar\sigma+c\ell$. Then in $\CC^\ast/\bar\QQ^\ast $
$$b\equiv a\cdot r_\sigma/ \bar r_\sigma \text{ and }
r_{\sigma'}\equiv c\equiv a\cdot r_\sigma.$$
%\in \langle a\cdot r_\sigma, r_\sigma\rangle_{\bar\QQ}.$$
Furthermore,  $r_{\sigma'}=a\cdot r_\sigma=\bar r_\sigma$ if $b=1$ and $r_{\sigma'}\equiv 1$ if $c=1$.
\end{prop}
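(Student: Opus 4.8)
The claim computes the period value $r_{\sigma'}$ of a CM twistor fibre $T'=\ell'^\perp$ in terms of the period value $r_\sigma$ of the original $T$. The key point is that we already know $T'$ has CM by Proposition~\ref{prop:FinalHodge}, so the period value $r_{\sigma'}=(\sigma'.\gamma')$ is well-defined by Lemma~\ref{lem:PVindep} and we are free to pick the most convenient $\gamma'\in T'$. The plan is to evaluate $(\sigma'.\gamma')$ for a single cleverly chosen test class, using the explicit form $\sigma'=a\sigma+b\bar\sigma+c\ell$ together with the relation $(\sigma.\bar\sigma)\equiv r_\sigma\bar r_\sigma$ from Lemma~\ref{lem:r-1sigma} and the fact that $r_\sigma^{-1}\sigma\in T\otimes\bar\QQ$.

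\textbf{Key steps.} First I would compute $b$ in terms of $a$. The condition that $\sigma'$ be isotropic, equation~(\ref{eqn:quad}), reads $2ab(\sigma.\bar\sigma)+c^2d=0$; working modulo $\bar\QQ^\ast$ (so $c,d\in\bar\QQ^\ast$ become units, as does the factor $2$) this gives $ab(\sigma.\bar\sigma)\equiv 1$, hence $ab\cdot r_\sigma\bar r_\sigma\equiv 1$ by Lemma~\ref{lem:r-1sigma}. Dividing, $b\equiv a^{-1}r_\sigma^{-1}\bar r_\sigma^{-1}$; but also $b\equiv a\cdot(ab)^{-1}\cdot b^2$ is awkward, so instead I would note directly from $ab\,r_\sigma\bar r_\sigma\equiv1$ that $b/a\equiv a^{-2}(r_\sigma\bar r_\sigma)^{-1}$ — this does not immediately look like $r_\sigma/\bar r_\sigma$, so the honest route is: solve for $r_{\sigma'}$ first and then read off $b$. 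Take $\gamma_1$ to be the $T$-component of $\ell'$ as in Section~\ref{sec:TwCM}, and $\gamma_1'=\gamma_1-m^{-1}(\ell'.\gamma_1)\ell\in T'$. From the computation in Section~\ref{sec:TwCM}, $(\sigma'.\gamma_1')=(\sigma'.\gamma_1)-m^{-1}(\ell'.\gamma_1)(\sigma'.\ell)$; since $(\sigma.\gamma_1)=1$, $(\bar\sigma.\gamma_1)=1$ (the $i=1$ coordinate, $\alpha^0=1$), and $(\sigma'.\ell)=cd$, this equals $a+b+cd\cdot(\text{rational})$, i.e. $(\sigma'.\gamma_1')\equiv a+b$ modulo $\bar\QQ$... but $a+b$ need not be in $\bar\QQ$. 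I would instead pick a class on which only the $\ell$-term survives, or more robustly use that $(\sigma'.\gamma)= a(\sigma.\gamma)+b(\bar\sigma.\gamma)+c(\ell.\gamma)$ for $\gamma\in T'$, choose $\gamma$ so that $(\bar\sigma.\gamma)=0$ and $(\ell.\gamma)=0$ (possible in $T'\otimes\bar\QQ$ for dimension reasons once $r>2$, or handle $r=2$ separately since then there is no room and $c\ne 0$): then $(\sigma'.\gamma)=a(\sigma.\gamma)\equiv a\cdot r_\sigma$, giving $r_{\sigma'}\equiv a r_\sigma$.

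\textbf{Finishing.} With $r_{\sigma'}\equiv a\,r_\sigma$ established, the relation $ab\,r_\sigma\bar r_\sigma\equiv 1$ rearranges to $b\equiv (a r_\sigma)^{-1}\bar r_\sigma^{-1}\equiv a^{-1}r_\sigma^{-1}\bar r_\sigma^{-1}$; multiplying numerator and denominator by $a r_\sigma$ and using $r_{\sigma'}\equiv a r_\sigma$ is circular, so instead: from $b \equiv a \cdot (a^2 r_\sigma\bar r_\sigma)^{-1}$ is not yet $a r_\sigma/\bar r_\sigma$ — the resolution is that the two displayed formulas $b\equiv a r_\sigma/\bar r_\sigma$ and $r_{\sigma'}\equiv c\equiv a r_\sigma$ are consistent with (\ref{eqn:quad}) only after also using the \emph{rationality} $c\in\bar\QQ^\ast$ more carefully, namely rescaling $\sigma'$ so that $c=1$; then $2ab(\sigma.\bar\sigma)+d=0$ forces $ab\equiv(r_\sigma\bar r_\sigma)^{-1}$ and the claim $b\equiv a r_\sigma/\bar r_\sigma$ is equivalent to $a^2\equiv\bar r_\sigma/(r_\sigma\cdot r_\sigma\bar r_\sigma^{-1})$... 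I expect the clean derivation of $b$ to go via computing $(\bar\sigma'.\gamma)$ for a dual test class, getting $r_{\bar\sigma'}\equiv b\bar r_\sigma$, and then using $r_{\bar\sigma'}\equiv \bar r_{\sigma'}\equiv \overline{a r_\sigma}\equiv a\bar r_\sigma$ (as $a\in\RR$, since $a=\sqrt{\cdot}$ up to the normalization, or more precisely $\bar a$ plays the symmetric role), whence $b\bar r_\sigma\equiv a\bar r_\sigma\cdot$(something) — the bookkeeping of which of $a,b,c$ lie in which field is the main obstacle. The special cases $b=1$ (forcing $a\equiv\bar r_\sigma/r_\sigma$ by the above, so $r_{\sigma'}\equiv a r_\sigma\equiv\bar r_\sigma$) and $c=1$ (forcing $a r_\sigma\equiv c\equiv 1$) then drop out immediately. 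The genuinely delicate point — and the one I would be most careful about — is justifying the existence of the test class $\gamma\in T'\otimes\bar\QQ$ with prescribed pairings against $\sigma,\bar\sigma,\ell$, i.e. that orthogonal projection from $T'\otimes\bar\QQ$ onto the relevant coordinate is surjective; this uses irreducibility of $T'$ (Remark~\ref{rem:irredm2=0}) exactly as in Lemma~\ref{lem:irredHS1}, and the low-dimensional case $r=2$ where no such $\gamma$ exists must be checked by hand from $\sigma'=a\sigma+b\bar\sigma+c\ell$ with $\dim T=2$ directly.
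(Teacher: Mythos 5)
Your proposal is not yet a proof: the two congruences $b\equiv a\,r_\sigma/\bar r_\sigma$ and $c\equiv a\,r_\sigma$, which are the actual content of the statement, are never derived. Your first attempt extracts $b$ from (\ref{eqn:quad}) alone after silently setting $c\equiv 1$, which is not legitimate (the triple $(a,b,c)$ is only determined up to a common scalar, and $c$ is exactly one of the unknowns); you notice the resulting inconsistency but do not resolve it. Your second attempt invokes $r_{\bar\sigma'}\equiv\bar r_{\sigma'}$ together with ``$a\in\RR$'', and the latter is unjustified and in general false (with the normalization $c=1$ of Section \ref{sec:TwCM} one only has $a+b=-m\in\QQ$, so $a$ is genuinely complex). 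The one computation you do carry essentially to the end, $r_{\sigma'}\equiv a\,r_\sigma$ via a test class $\gamma\in T'\otimes\bar\QQ$ with $(\bar\sigma.\gamma)=(\ell.\gamma)=0$ and $(\sigma.\gamma)\ne0$, rests on the existence of that class, which you correctly flag as the delicate point but do not prove: one has to show that the three functionals $(r_\sigma^{-1}\sigma.\,\cdot\,)$, $(\bar r_\sigma^{-1}\bar\sigma.\,\cdot\,)$, $(\ell.\,\cdot\,)$ remain linearly independent after restriction to $T'\otimes\bar\QQ$, i.e.\ that no combination of $\sigma,\bar\sigma,\ell$ lies in $\bar\QQ\cdot\ell'$; this needs an argument (it comes down to $P_T\cap T=0$ for $r>2$ in the CM case) and, as you say, fails outright for $r=2$. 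One also has to justify pairing against a $\bar\QQ$-class rather than a rational one, which is fine by Lemma \ref{lem:r-1sigma} but should be said.

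Your strategy is repairable: with the same kind of test class but orthogonal to $\sigma$ and $\ell$ (rather than passing to $\bar\sigma'$), one gets $(\sigma'.\gamma)=b(\bar\sigma.\gamma)\equiv b\,\bar r_\sigma$, hence $b\,\bar r_\sigma\equiv r_{\sigma'}\equiv a\,r_\sigma$, which is the claimed formula for $b$; then (\ref{eqn:quad}) gives $(c/(a r_\sigma))^2\in\bar\QQ^\ast$ and hence $c\equiv a\,r_\sigma$ since $\bar\QQ$ is algebraically closed. The paper avoids all of this. Its key move, absent from your proposal, is a normalization: first treat the case $a=1$ and $r_\sigma\equiv1$, i.e.\ $\sigma\in T\otimes\bar\QQ$, where (\ref{eqn:quad}) and (\ref{eqn:linear}) become two equations for $(b,c)$ with algebraic coefficients, so $b,c\in\bar\QQ$ as in Lemma \ref{lem:QbarQbar}; the general case follows by rewriting $r_{a\sigma}^{-1}\sigma'$ in terms of the algebraic generator $r_{a\sigma}^{-1}(a\sigma)$ and reading off the coefficients. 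This yields $b$ and $c$ simultaneously, with no test classes, no case distinction at $r=2$, and no surjectivity of projections to verify; the formula for $r_{\sigma'}$ then comes from pairing with an arbitrary rational $\gamma\in T'$, since all three terms of $(\sigma'.\gamma)$ are already known to be algebraic multiples of $a\,r_\sigma$.
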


\begin{proof}
Note that by virtue of Proposition \ref{prop:FinalHodge} the Hodge structure $T'$ has complex multiplication and, therefore, its period values $r_{\sigma'}\coloneqq r_{\scriptscriptstyle T',\bar\QQ\cdot\sigma'}$ are well defined for any $0\ne \sigma'\in  T^{2,0}$.

Let us first look at the case that  $r_\sigma\equiv 1$, i.e.\ $\sigma\in T\otimes\bar\QQ$, and $a=1$.
Then, (\ref{eqn:quad}) and (\ref{eqn:linear}) imply $b,c\in\bar\QQ$, cf.\ Lemma \ref{lem:QbarQbar}. In the general case, rewrite $r_{a\sigma}^{-1}\sigma'$ as
follows $$r_{a\sigma}^{-1}\sigma'=r_{a\sigma}^{-1}(a\sigma)+(b/\bar a)(\bar r_{a\sigma}/r_{a\sigma})\overline{(r_{a\sigma}^{-1} (a\sigma))}+(r_{a\sigma}^{-1}c)\ell.$$
Then by the first step $b\equiv \bar a \cdot r_{a\sigma}/\bar r_{a\sigma}\equiv a\cdot r_\sigma/\bar r_\sigma$  and $c\equiv r_{a\sigma}\equiv a\cdot r_\sigma$ in $\CC^\ast/\bar\QQ^\ast$.

To determine $r_{\sigma'}$ we pick an arbitrary $0\ne\gamma\in T'$ and compute
$(\sigma'.\gamma)=a(\sigma.\gamma)+b\overline{(\sigma.\gamma)}+c(\ell.\gamma)=
a (u \cdot r_\sigma+v\cdot (r_\sigma/\bar r_\sigma)\cdot \bar r_\sigma+ w\cdot r_\sigma)$,
$u,v,w\in\bar\QQ$, which suffices to conclude. 

The assertion for $c=1$ is immediate, and for $b=1$, let
$\gamma\in T$ and compute $r_{\sigma'}\equiv (\sigma'.\gamma)=a(\sigma.\gamma)+\overline{(\sigma.\gamma)}\equiv a\cdot r_\sigma+\bar r_\sigma\equiv a\cdot r_\sigma$.
\end{proof}

In the geometric situation, where all CM fibres $\ks_t$ come equipped with a natural $\bar\QQ$-line
in $H^{2,0}(\ks_t)$, we expect the natural period values to depend on $t$. As the three natural choices
for $\sigma'$ consisting of setting $a=1$, $b=1$, resp.\ $c=1$, all lead to constant period
values $r_{\sigma'}$ no
preferred choice for $\sigma'$ seems to suggest itself from a purely Hodge theoretic point of view.

%%%%%%%%%%%%%%%%%

\section{K3 surfaces with CM}\label{sec:K3CM}
The results of the previous sections can be applied to the transcendental part $$T\coloneqq T(S)\coloneqq{\rm NS}(S)^\perp\otimes\QQ\subset H^2(S,\QQ)$$
of a complex 
projective K3 surface $S$. In fact, everything remains valid if instead of a K3 surface $S$, one considers
a projective hyperk\"ahler manifold, cf.\ Section \ref{sec:HK}. For simplicity we restrict to the case of K3 surfaces and leave the necessary modifications in the hyperk\"ahler case to the reader. 

\subsection{Dictionary}\label{sec:Dict}
The intersection form on $H^2(S,\ZZ)$ provides the bilinear
form $(~.~)$ on $T=T(S)$. It is positive definite on the plane
$$P_T=(H^{2,0}(S)\oplus H^{0,2}(S))\cap H^2(S,\RR)$$
and, by virtue of the projectivity of $S$, negative definite on its orthogonal complement $P_T^\perp\subset T_\RR$.
The \emph{period field} and the \emph{endomorphism field} of $S$ are introduced as the
corresponding fields of the transcendental part $T(S)$:
$$k_S\coloneqq k_{T(S)}\text{ and }K_S\coloneqq K_{T(S)}.$$
So, $k_S$ is the residue field of the period point of $S$ taken either in
$\PP(T(S)_\QQ)\times_\QQ\CC$ or $\PP(H^2(S,\QQ))\times_\QQ\CC$ and for $K_S$ one knows
$\dim_{K_S}T=(22-\rho(S))\cdot[K_S:\QQ]^{-1}$.

Fix an ample class $\ell={\rm c}_1(L)\in H^2(S,\ZZ)$ and consider $T(S)\oplus \QQ\cdot\ell\subset
H^2(S,\QQ)$ with its natural quadratic form. Then $P_T\oplus\RR\cdot\ell\subset H^2(S,\RR)$ is positive
definite of dimension three. As in the abstract setting, one can think of the twistor base
$\PP^1_\ell$ as the set of all $\sigma_t\in H^{2,0}((M,I_t))$ up to scaling or as the set of all oriented positive
planes $P_t=\langle {\rm Re}(\sigma_t),{\rm Im}(\sigma_t)\rangle_\RR\subset P_T\oplus \RR\cdot\ell$
or as the sphere $S_\ell^2\subset P_T\oplus\RR\cdot\ell\subset H^2(S,\RR)$
of K\"ahler classes $\omega_t$ that span the line orthogonal to $P_t$. The Picard number
$\rho_z$ (cf.\ Section \ref{sec:Picardjumping}) of a point $z\in\PP^1_\ell$
 and the Picard number $\rho(\ks_t)$ of the twistor fibre $\ks_t=(M,I_t)$, with $t$ corresponding to  $z$
 under $ S^2_\ell\cong\PP^1_\ell$,
compare as follows $$\rho_z+\rho(S)-1=\rho(\ks_t).$$

The equator $$S^1_\ell\subset S^2_\ell\subset P_T\oplus\RR\cdot\ell\subset H^2(S,\RR)$$ can be thought
of as the set of K\"ahler classes $\omega_t$ orthogonal to $\ell$ or as the set of complex structures $I_t$
such that $\ell$ is contained in the positive plane $P_t$. As an immediate consequence of the abstract
Proposition \ref{prop:Picardjump},  we state the following.
 
 \begin{cor}
 Assume $\ks\to \PP^1$ is the twistor space associated with a polarized K3 surface $(S,L)$.
 If $\rho(\ks_t)>\rho(S)$, then $t$ is contained in the equator $S^1_\ell\subset S^2_\ell$.\qed
 \end{cor}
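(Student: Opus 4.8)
The plan is to derive this corollary as an immediate translation of the abstract Proposition~\ref{prop:Picardjump} through the dictionary established in Section~\ref{sec:Dict}. First I would recall the bookkeeping identity $\rho_z+\rho(S)-1=\rho(\ks_t)$ relating the Picard number $\rho_z$ of the Hodge structure on $T(S)\oplus\QQ\cdot\ell$ attached to $z\in\PP^1_\ell$ and the geometric Picard number of the corresponding twistor fibre $\ks_t=(M,I_t)$, where $t$ and $z$ correspond under the identification $S^2_\ell\cong\PP^1_\ell$ of (\ref{eqn:PGRS}). This identity holds because $\NS(\ks_t)\otimes\QQ$ splits as $\NS(S)_\QQ$ minus the line $\QQ\cdot\ell$ (which moves out of $\NS$ as we leave the north pole, being absorbed into the varying positive plane) plus the space $P_z^\perp\cap(T(S)\oplus\QQ\cdot\ell)$ of new $(1,1)$-classes; the $-1$ accounts precisely for $\ell$ no longer being of type $(1,1)$ at a general fibre while still being counted once in $\rho_z$ at the north pole.

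Next I would simply observe that the hypothesis $\rho(\ks_t)>\rho(S)$ is, via this identity, equivalent to $\rho_z>1$. Here one uses that the transcendental Hodge structure $T=T(S)$ of a projective K3 surface is irreducible of K3 type with signature $(2,r-2)$ where $r=22-\rho(S)$, so that Proposition~\ref{prop:Picardjump}(ii) applies verbatim: the set $\{z\mid\rho_z>1\}$ is contained in the equator $S^1_\ell\subset S^2_\ell$. Transporting back through $S^2_\ell\cong\PP^1_\ell$, and identifying $S^1_\ell$ with the geometric equator of the twistor sphere (the set of K\"ahler classes $\omega_t$ orthogonal to $\ell$, equivalently the set of $I_t$ with $\ell\in P_t$) as in Section~\ref{sec:Dict}, we conclude that $t$ lies on the equator.

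There is essentially no obstacle here: this is a genuine corollary, and the only thing to be careful about is stating the dictionary precisely enough—namely that irreducibility and the signature condition on $T(S)$ are automatic for a complex projective K3 surface (the Hodge--Riemann bilinear relations give signature $(2,20-\rho)$ on $T(S)$, and irreducibility of $T(S)$ as a Hodge structure is classical, since any proper sub-Hodge structure containing $H^{2,0}$ would otherwise split off and contradict the definition $T(S)=\NS(S)^\perp$). Once those hypotheses of Proposition~\ref{prop:Picardjump} are verified, the statement follows by pure translation, and the proof is the one-line "\,As an immediate consequence of Proposition~\ref{prop:Picardjump}(ii) together with the identity $\rho_z+\rho(S)-1=\rho(\ks_t)$.\," that the \texttt{\textbackslash qed} in the excerpt already signals.
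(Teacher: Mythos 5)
Your proposal is correct and is exactly the paper's argument: the paper states this corollary with only a \qed, deriving it immediately from Proposition \ref{prop:Picardjump}(ii) via the dictionary identity $\rho_z+\rho(S)-1=\rho(\ks_t)$ of Section \ref{sec:Dict}, precisely as you do. Your additional verification that $T(S)$ is irreducible of signature $(2,r-2)$, so that the abstract proposition applies, is a sensible (and correct) filling-in of what the paper leaves implicit.
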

 
\begin{remark}
Note however that there are cases where $\rho(S)$ is maximal, i.e.\ for all fibres one has
$\rho(\ks_t)\leq\rho(S)$. Clearly, this is the case when $\rho(S)=20$. It would be interesting
to work out geometric conditions on $(S,L)$ such that $\rho(\ks_t)\leq\rho(S)$ holds for all twistor fibres.
\end{remark}

The main Theorem  \ref{thm:main} is then an immediate consequence of Proposition \ref{prop:FinalHodge}.
We rephrase it in the following alternative but equivalent form.

\begin{thm}
Consider the twistor space $\ks\to\PP^1$ associated with a polarized, complex K3 surface
$(S,L)$ with complex multiplication. Then every algebraic fibre $\ks_t$ such that
$\ell={\rm c}_1(L)$ is not contained in $H^{2,0}(\ks_t)\oplus H^{0,2}(\ks_t)$ has complex multiplication
as well. Moreover, the maximal totally real subfields of the two endomorphism fields of $S$ and $\ks_t$
coincide.\qed
\end{thm}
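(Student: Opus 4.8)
The plan is to deduce the final theorem purely from the abstract Hodge-theoretic Proposition~\ref{prop:FinalHodge}, using the dictionary of Section~\ref{sec:Dict} that identifies the twistor base $S^2_\ell\cong\PP^1_\ell$ attached to the polarized K3 surface $(S,L)$ with the one attached to the Hodge structure $T=T(S)\subset H^2(S,\QQ)$ and the class $\ell=\mathrm{c}_1(L)$. First I would fix an algebraic fibre $\ks_t=(M,I_t)$ and let $z\in\PP^1_\ell$ be the corresponding point; since $\ks_t$ is algebraic its Picard lattice is nondegenerate and of signature $(1,\rho(\ks_t)-1)$, so there is an ample, hence positive-square, integral class on $\ks_t$, which under the identification $\NS(\ks_t)\supset P_z^\perp\cap(T\oplus\QQ\cdot\ell)$ produces a class $\ell'\in T\oplus\QQ\cdot\ell$ with $(\ell'.\ell')>0$ and $z\perp\ell'$, so that $T'=\ell'^\perp$ is exactly the transcendental Hodge structure $T(\ks_t)$ (possibly after passing to the irreducible piece, but Proposition~\ref{prop:Picardjump} shows $T'$ is already irreducible once $z\notin S^1_\ell$). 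The hypothesis that $\ell$ is not contained in $H^{2,0}(\ks_t)\oplus H^{0,2}(\ks_t)$ says precisely, via $P_z=\langle\mathrm{Re}(\sigma_t),\mathrm{Im}(\sigma_t)\rangle_\RR$, that $\ell\notin P_z$, i.e.\ $z\notin S^1_\ell$; this is the translation step and it is where one must be careful that "algebraic fibre away from the equator'' matches the abstract "$x'\in\PP^1_\ell\setminus S^1_\ell$ orthogonal to some $\ell'$ with $(\ell'.\ell')>0$''.

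Once this identification is in place, the rest is immediate: Proposition~\ref{prop:FinalHodge} applied to $T=T(S)$ (which has CM by hypothesis) and to $\ell'$ yields that $T'=T(\ks_t)$ is an irreducible K3-type Hodge structure of signature $(2,r-2)$ with CM, hence $\ks_t$ has complex multiplication, and that $K_{T(S)}^0=K_{T(\ks_t)}^0$, which is the claimed equality of maximal totally real subfields $K_S^0=K_{\ks_t}^0$. I would also note that the excluded case $\ell\in P_z$ is genuinely excluded, not a defect of the method, by invoking Lemma~\ref{lem:equatorbad}: for $\dim_\QQ T(S)>2$ an equator fibre is never CM, and if $\dim_\QQ T(S)=2$ the statement is vacuous in the sense that $S$ and all fibres automatically have CM anyway.

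The only genuinely delicate point is the dictionary itself, and in particular the claim that the transcendental Hodge structure of the \emph{complex manifold} $\ks_t$ coincides with the abstract $T'=\ell'^\perp$. This rests on the twistor description of the periods recalled in Sections~\ref{sec:TwistorConstrT} and~\ref{sec:Dict}: the line $H^{2,0}(\ks_t)$ is spanned by $\sigma_t=a\sigma+b\bar\sigma+c\ell$ for suitable $a,b,c$, with $(\sigma_t.\sigma_t)=0$ governed by~(\ref{eqn:quad}), and orthogonality to the ample class picks out~(\ref{eqn:linear}); then $T(\ks_t)=\NS(\ks_t)^\perp\otimes\QQ$ inside $H^2(\ks_t,\QQ)=H^2(M,\QQ)$, and since $H^2(M,\QQ)=(T(S)\oplus\QQ\cdot\ell)\oplus(\QQ\cdot\ell)^\perp_{\NS(S)}$ with the second summand of type $(1,1)$ for every $I_t$, one gets $T(\ks_t)=\ell'^\perp\cap(T(S)\oplus\QQ\cdot\ell)=T'$. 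I would spell this out in one short paragraph and then simply cite Proposition~\ref{prop:FinalHodge}; everything else is bookkeeping, and no step requires a computation beyond what has already been done in Sections~\ref{sec:HodgeTwistor} and~\ref{sec:PeriodsHodge}.
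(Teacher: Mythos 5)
Your proposal is correct and follows essentially the same route as the paper: the theorem is stated there as an immediate consequence of Proposition~\ref{prop:FinalHodge} via exactly the dictionary you describe, with the existence of a positive-square $\ell'\in T\oplus\QQ\cdot\ell$ orthogonal to the period extracted from algebraicity of $\ks_t$ just as you do. The identification $T(\ks_t)=\ell'^{\perp}\cap(T(S)\oplus\QQ\cdot\ell)$ and the translation of the equator condition are the only points needing care, and you handle both as the paper does.
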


Using Lemma \ref{lem:equatorbad}, one sees that in the case of $\rho(S)<20$ the fibres over the equator have
indeed to be avoided. This is the following corollary.

\begin{cor}\label{cor:notCM}
Assume $(S,L)$ is a polarized K3 surface with complex multiplication and Picard number $\rho(S)<20$.
Then no algebraic fibre $\ks_t$ of the associated twistor space with $t$ contained
in the equator has complex multiplication.
\end{cor}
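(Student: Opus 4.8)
The plan is to reduce the corollary to the purely Hodge-theoretic Lemma~\ref{lem:equatorbad}, via the dictionary of Section~\ref{sec:Dict}. Write $T\coloneqq T(S)=\NS(S)^\perp\otimes\QQ$; since $S$ has CM this is an irreducible Hodge structure of K3 type of signature $(2,20-\rho(S))$, equipped with a CM field $K_T$, and $\rho(S)<20$ forces $\dim_\QQ T=22-\rho(S)>2$, so Lemma~\ref{lem:equatorbad} is in force. Fixing $\ell={\rm c}_1(L)$, the twistor base is $\PP^1_\ell\cong S^2_\ell\subset P_T\oplus\RR\cdot\ell\subset H^2(S,\RR)$, with equator $S^1_\ell$ the set of $z$ with $\ell\in P_z$, and the fibre $\ks_t$ over $z$ satisfies $\rho(\ks_t)=\rho_z+\rho(S)-1$.

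The first step is to pin down the transcendental lattice of an algebraic fibre. As the underlying differentiable manifold is fixed, $H^2(\ks_t,\QQ)=H^2(S,\QQ)$, and orthogonally $H^2(S,\QQ)=(\NS(S)\cap\ell^\perp)_\QQ\oplus^\perp(T\oplus\QQ\cdot\ell)$, the first summand staying of type $(1,1)$ for every fibre; hence $\NS(\ks_t)_\QQ=(\NS(S)\cap\ell^\perp)_\QQ\oplus^\perp\bigl(P_z^\perp\cap(T\oplus\QQ\cdot\ell)\bigr)$. If $\ks_t$ is algebraic, then $\NS(\ks_t)$ contains a class of positive square, and since $(\NS(S)\cap\ell^\perp)_\QQ$ is negative definite by the Hodge index theorem, the component of that class in $P_z^\perp\cap(T\oplus\QQ\cdot\ell)$ already has positive square. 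Taking $\ell'$ to be this component, we obtain a rational class $\ell'\in T\oplus\QQ\cdot\ell$ with $(\ell'.\ell')>0$ and $\ell'\perp z$, so $z$ is (up to complex conjugation) the point $x'$ attached to $\ell'$. Then $T(\ks_t)=T'\coloneqq\ell'^\perp$ when $\rho_z=1$, and $T(\ks_t)=T''$, the minimal sub-Hodge structure of K3 type of $T'$, when $\rho_z>1$.

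Now suppose in addition that $t$ lies on the equator, so $z=x'\in S^1_\ell$. Then the hypotheses of Lemma~\ref{lem:equatorbad} hold for this $\ell'$ (with $(\ell'.\ell')>0$), and the lemma yields that $T'$ is not of CM type, and that $T''$ is not of CM type either when $T'$ is reducible. In all cases $T(\ks_t)$ fails to have CM, so by the definition of CM for K3 surfaces $\ks_t$ does not have complex multiplication; as this holds for every algebraic fibre over the equator, the corollary follows. The only steps needing care — though neither is a genuine obstacle — are the signature bookkeeping producing the \emph{positive-square} class $\ell'$ orthogonal to $z$, so that Lemma~\ref{lem:equatorbad} truly applies rather than the degenerate range $(\ell'.\ell')\le0$, and the identification of $T(\ks_t)$ with $T'$ or its minimal sub-Hodge structure $T''$; it is essential here to carry the case $\rho_z>1$ along, since on the equator Proposition~\ref{prop:FinalHodge} is silent and it is precisely the second half of Lemma~\ref{lem:equatorbad} that handles it.
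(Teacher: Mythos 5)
Your argument is correct and is essentially the paper's own proof: the paper likewise observes that algebraicity of $\ks_t$ produces a class $\ell'\in T\oplus\QQ\cdot\ell$ of positive square orthogonal to the period and then invokes Lemma~\ref{lem:equatorbad}. You merely make explicit the details the paper leaves implicit (the negative definiteness of $(\NS(S)\cap\ell^\perp)_\QQ$ forcing $(\ell'.\ell')>0$, and the identification of $T(\ks_t)$ with $T'$ or $T''$ so that the second half of the lemma covers $\rho_z>1$), all of which check out.
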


\begin{proof}
As we assume $\ks_t$ is algebraic, there must exist an $\ell'\in T\oplus\QQ\cdot\ell$ orthogonal to the
period $a\sigma+b\bar\sigma+\ell$ with $(\ell'.\ell')>0$. Hence, Lemma \ref{lem:equatorbad} applies.
\end{proof}

\begin{remark}
The last result in particular shows that no twistor fibre $\ks_t$ of a twistor space associated with a
 polarized K3 surface $(S,L)$ with complex multiplication and Picard number $\rho(S)<20$ will ever have maximal Picard
number $\rho(\ks_t)=20$.
\end{remark}

\begin{picture}(100,140)
\put(145,19){\begin{tikzpicture}  
\filldraw[draw=black,fill=white,opacity=0.9] (2,2) circle (1.9cm);\end{tikzpicture}}
 
\put(145,58){\begin{tikzpicture}
 \draw[black, line width=0.2mm]  (0,0) .. controls (1.2,-0.5) and (2.6,-0.5) .. (3.8,0);
\end{tikzpicture}}

\put(145,72.5){\begin{tikzpicture}
 \draw[black, line width=0.1mm,opacity=0.4]  (0,0) .. controls (1.2,0.5) and (2.6,0.5) .. (3.8,0);
\end{tikzpicture}}
\put(122,115){$S^2_\ell$}
\put(122,71){$S^1_\ell$}
\put(152,91){\begin{tikzpicture}\draw (-2,1,5) node[anchor=south] {.};\end{tikzpicture}}
\put(172,111){\begin{tikzpicture}\draw (-2,1,5) node[anchor=south] {.};\end{tikzpicture}}
\put(192,119){\begin{tikzpicture}\draw (-2,1,5) node[anchor=south] {.};\end{tikzpicture}}
\put(210,85){\begin{tikzpicture}\draw (-2,1,5) node[anchor=south] {.};\end{tikzpicture}}
\put(192,81){\begin{tikzpicture}\draw (-2,1,5) node[anchor=south] {.};\end{tikzpicture}}
\put(182,51){\begin{tikzpicture}\draw (-2,1,5) node[anchor=south] {.};\end{tikzpicture}}
\put(172,81){\begin{tikzpicture}\draw (-2,1,5) node[anchor=south] {.};\end{tikzpicture}}
\put(182,51){\begin{tikzpicture}\draw (-2,1,5) node[anchor=south] {.};\end{tikzpicture}}
\put(192,81){\begin{tikzpicture}\draw (-2,1,5) node[anchor=south] {.};\end{tikzpicture}}
\put(222,51){\begin{tikzpicture}\draw (-2,1,5) node[anchor=south] {.};\end{tikzpicture}}
\put(232,81){\begin{tikzpicture}\draw (-2,1,5) node[anchor=south] {.};\end{tikzpicture}}
\put(232,91){\begin{tikzpicture}\draw (-2,1,5) node[anchor=south] {.};\end{tikzpicture}}
\put(222,101){\begin{tikzpicture}\draw (-2,1,5) node[anchor=south] {.};\end{tikzpicture}}
\put(212,111){\begin{tikzpicture}\draw (-2,1,5) node[anchor=south] {.};\end{tikzpicture}}
\put(182,51){\begin{tikzpicture}\draw (-2,1,5) node[anchor=south] {.};\end{tikzpicture}}
\put(163,45){\begin{tikzpicture}\draw (-2,1,5) node[anchor=south] {.};\end{tikzpicture}}
\put(147,55){\begin{tikzpicture}\draw (-2,1,5) node[anchor=south] {.};\end{tikzpicture}}
\put(222,51){\begin{tikzpicture}\draw (-2,1,5) node[anchor=south] {.};\end{tikzpicture}}
\put(183,45){\begin{tikzpicture}\draw (-2,1,5) node[anchor=south] {.};\end{tikzpicture}}
\put(167,55){\begin{tikzpicture}\draw (-2,1,5) node[anchor=south] {.};\end{tikzpicture}}
\put(182,51){\begin{tikzpicture}\draw (-2,1,5) node[anchor=south] {.};\end{tikzpicture}}
\put(163,45){\begin{tikzpicture}\draw (-2,1,5) node[anchor=south] {.};\end{tikzpicture}}
\put(147,55){\begin{tikzpicture}\draw (-2,1,5) node[anchor=south] {.};\end{tikzpicture}}
\put(242,51){\begin{tikzpicture}\draw (-2,1,5) node[anchor=south] {.};\end{tikzpicture}}
\put(203,45){\begin{tikzpicture}\draw (-2,1,5) node[anchor=south] {.};\end{tikzpicture}}
\put(187,55){\begin{tikzpicture}\draw (-2,1,5) node[anchor=south] {.};\end{tikzpicture}}

\put(182,71){\begin{tikzpicture}\draw (-2,1,5) node[anchor=south] {.};\end{tikzpicture}}
\put(163,68){\begin{tikzpicture}\draw (-2,1,5) node[anchor=south] {.};\end{tikzpicture}}
\put(147,75){\begin{tikzpicture}\draw (-2,1,5) node[anchor=south] {.};\end{tikzpicture}}
\put(222,71){\begin{tikzpicture}\draw (-2,1,5) node[anchor=south] {.};\end{tikzpicture}}
\put(203,68){\begin{tikzpicture}\draw (-2,1,5) node[anchor=south] {.};\end{tikzpicture}}
\put(187,55){\begin{tikzpicture}\draw (-2,1,5) node[anchor=south] {.};\end{tikzpicture}}
\put(202,31){\begin{tikzpicture}\draw (-2,1,5) node[anchor=south] {.};\end{tikzpicture}}
\put(183,28){\begin{tikzpicture}\draw (-2,1,5) node[anchor=south] {.};\end{tikzpicture}}
\put(157,35){\begin{tikzpicture}\draw (-2,1,5) node[anchor=south] {.};\end{tikzpicture}}
\put(231,33){\begin{tikzpicture}\draw (-2,1,5) node[anchor=south] {.};\end{tikzpicture}}
\put(223,28){\begin{tikzpicture}\draw (-2,1,5) node[anchor=south] {.};\end{tikzpicture}}
\put(2017,35){\begin{tikzpicture}\draw (-2,1,5) node[anchor=south] {.};\end{tikzpicture}}

\put(144,66.5){\begin{tikzpicture}\draw (-2,1,5) node[anchor=south] {.};\end{tikzpicture}}
%\put(152,65.4){\begin{tikzpicture}\draw (-2,1,5) node[anchor=south] {.};\end{tikzpicture}}
\put(160,61.5){\begin{tikzpicture}\draw (-2,1,5) node[anchor=south] {.};\end{tikzpicture}}
%\put(168,61.8){\begin{tikzpicture}\draw (-2,1,5) node[anchor=south] {.};\end{tikzpicture}}
\put(176,58.5){\begin{tikzpicture}\draw (-2,1,5) node[anchor=south] {.};\end{tikzpicture}}
%\put(184,58){\begin{tikzpicture}\draw (-2,1,5) node[anchor=south] {.};\end{tikzpicture}}
\put(192,57.4){\begin{tikzpicture}\draw (-2,1,5) node[anchor=south] {.};\end{tikzpicture}}
%\put(200,58.1){\begin{tikzpicture}\draw (-2,1,5) node[anchor=south] {.};\end{tikzpicture}}
\put(208,58.){\begin{tikzpicture}\draw (-2,1,5) node[anchor=south] {.};\end{tikzpicture}}
%\put(216,59.8){\begin{tikzpicture}\draw (-2,1,5) node[anchor=south] {.};\end{tikzpicture}}
\put(224,60.5){\begin{tikzpicture}\draw (-2,1,5) node[anchor=south] {.};\end{tikzpicture}}
%\put(232,61.6){\begin{tikzpicture}\draw (-2,1,5) node[anchor=south] {.};\end{tikzpicture}}
\put(240,64.9){\begin{tikzpicture}\draw (-2,1,5) node[anchor=south] {.};\end{tikzpicture}}
%\put(247,64.2){\begin{tikzpicture}\draw (-2,1,5)[opacity=1.5] node[anchor=south] {.};\end{tikzpicture}}

\put(232,81){\begin{tikzpicture}\draw (-2,1,5)[opacity=0.4] node[anchor=south] {.};\end{tikzpicture}}
\put(218,101){\begin{tikzpicture}\draw (-2,1,5)[opacity=0.4] node[anchor=south] {.};\end{tikzpicture}}
\put(202,111){\begin{tikzpicture}\draw (-2,1,5)[opacity=0.4] node[anchor=south] {.};\end{tikzpicture}}
\put(172,101){\begin{tikzpicture}\draw (-2,1,5)[opacity=0.4] node[anchor=south] {.};\end{tikzpicture}}
\put(162,91){\begin{tikzpicture}\draw (-2,1,5)[opacity=0.4] node[anchor=south] {.};\end{tikzpicture}}
\put(182,101){\begin{tikzpicture}\draw (-2,1,5)[opacity=0.4] node[anchor=south] {.};\end{tikzpicture}}
\put(192,91){\begin{tikzpicture}\draw (-2,1,5)[opacity=0.4] node[anchor=south] {.};\end{tikzpicture}}
\put(197,125){\small$\bullet$}
\put(202,135){ $S$ CM}

\put(197,17){\small$\bullet$}
\put(202,5){ $\bar S$ }

\put(270,100){ $S'$ CM and $K_S^0=K_{S'}^0$}
\put(247,100){$\leftarrow$}
\put(280,70){$S'$ not CM}
\put(257,70){$\leftarrow$}
 \end{picture}

%%%%%%%%%%%%%%%%%
\subsection{Endomorphisms as Hodge classes on the product}\label{sec:EndoasHodge}
 Recall that a projective K3 surface $S$ has 
CM  if its transcendental part $T=T(S)$ has CM, i.e.\ the endomorphism field $K_S$ of
all endomorphisms of the Hodge structure $T$ is a CM field with $\dim_{K_S}T=1$.

\begin{remark}\label{rem:EndHodgeS2}
The endomorphism field $K_S={\rm End}_{\rm Hdg}(T(S))$  can be interpreted geometrically in terms of Hodge classes on the product $S\times S$.
More precisely,  via Poincar\'e duality the space of rational $(2,2)$ classes $H^{2,2}(S\times S,\QQ)$ splits as a direct sum of ${\rm End}({\rm NS}(S)_\QQ)\cong {\rm NS}(S)_\QQ\otimes {\rm NS}(S)_\QQ$
and $$K_S={\rm End}_{K_S}(T(S))\cong (T(S)\otimes T(S))^{2,2}.$$
Multiplication in $K_S$ can be understood in terms of composition of correspondences induced
by classes on $S\times S$.
As $T(S)\otimes T(S)\cong S^2T(S)\oplus \mbox{\Large $\wedge$}^2 T(S)$, one also has
$$K_S\cong (S^2T(S))^{2,2} \oplus (\mbox{\Large $\wedge$}^2 T(S))^{2,2}.$$ 
This is the decomposition into selfadjoint and anti-selfadjoint parts or, equivalently,
the eigenspace decomposition with respect to complex conjugation acting on $K_S$. In
particular,  the maximal totally real subfield is given by $$K_S^0\cong (S^2T(S))^{2,2}\subset H^{2,2}(S^{[2]},\QQ).$$
\end{remark}

As alluded to in the introduction, although $K_S^0=K_{S'}^0$ for all algebraic twistor fibres away from the equator, there is no class $\phi\in H^4(\ks\times_{\PP^1_\ell}\ks,\QQ)$ that would restrict to
a given class $\varphi\ne0,\pm1$ in $K_S^0$ on all the CM fibres. Indeed, $\varphi$ acts by multiplication
with a scalar on $\sigma$ and trivially on $\ell$. Thus, unless the scalar is $0$ or $\pm1$, it will not act
by scalar multiplication on any linear combination $a\sigma+b\bar\sigma+c\ell\ne\sigma,\bar\sigma$.

So, it seems something geometric happens behind the curtain of the transcendental twistor space, that is not completely explained by Hodge theory.

\subsection{K3 surfaces with CM are defined over number fields}\label{sec:CMK3}
The following important fact was proved in \cite[Thm.\ 4]{PSShaf} and a more precise version
can be found in \cite[Cor.\ 3.3.19]{Rizov}. The original arguments involve the Kuga--Satake construction
reducing the problem to the corresponding problem for abelian varieties. The proof below 
is more geometric relying on the Hodge conjecture for the square
$S\times S$ of a K3 surface $S$ with CM.

\begin{prop}\label{prop:CMimpliesQbar}
Any K3 surface with CM is defined over $\bar\QQ$. Equivalently, if $K_S=k_S$
for a projective K3 surface $S$, then $S$ is defined over $\bar\QQ$.
\end{prop}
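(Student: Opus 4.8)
The plan is to use the Hodge conjecture (known for the square of a K3 surface, and in particular for a K3 surface with CM, by Mumford and others via the Kuga--Satake correspondence) to reduce the statement about the field of definition of $S$ to a statement about the absence of continuous deformations of $S$ compatible with all algebraic cycles. The key point is the characterization, recalled in the introduction and in Section~\ref{sec:EndoasHodge}, that the endomorphism field $K_S = \End_{\rm Hdg}(T(S))$ can be viewed, via Poincar\'e duality, as a space of Hodge classes on $S \times S$; when $S$ has CM, $\dim_{K_S} T(S) = 1$, so $K_S$ is ``as large as possible'' and these Hodge classes rigidify the transcendental lattice completely.

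First I would recall that $S$ is defined over $\bar\QQ$ if and only if $S$ does not vary in a positive-dimensional family all of whose members are isomorphic to $S$ over $\CC$ after an automorphism of $\CC$ --- more precisely, one uses a spreading-out argument: $S$ is defined over a field $k$ finitely generated over $\QQ$, and if $S$ is not defined over $\bar\QQ$ then the transcendence degree of $k$ over $\QQ$ is positive, so there is a smooth projective family $\mathcal{S} \to B$ over a positive-dimensional $\bar\QQ$-variety $B$ with $S$ as a geometric generic fibre. Second I would show that for any such family the variation of Hodge structure on the transcendental parts $T(\mathcal{S}_b)$ must be isotrivial: by the Hodge conjecture for $\mathcal{S}_b \times \mathcal{S}_b$, every endomorphism of the Hodge structure $T(\mathcal{S}_b)$ is induced by an algebraic cycle, and these cycles spread out over (an \'etale cover of) $B$; since the generic fibre has CM, $\End_{\rm Hdg}(T(\mathcal{S}_b))$ has $\QQ$-dimension equal to $\dim_\QQ T(\mathcal{S}_b)$ for $b$ generic, hence for all $b$, so every fibre has CM and in fact the same CM field acts on all of them. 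But a CM Hodge structure of K3 type is rigid --- its period point is determined, up to the finite ambiguity of choosing an eigenvector, by the CM type of the $K_T$-action --- so the period map of the family is constant, contradicting the assumption that $B$ is positive-dimensional (using that a non-isotrivial family of polarized K3 surfaces has non-constant period map, by the local Torelli theorem).

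Concretely, the mechanism is: the relative transcendental lattice $T$ over $B$ underlies a polarized $\ZZ$-VHS; the Hodge classes in $T(\mathcal{S}_b) \otimes T(\mathcal{S}_b)$ giving $K_S$ extend, after a finite \'etale base change, to a sub-local-system of endomorphisms of the VHS (this is where the Hodge conjecture enters, to know the Hodge-generic locus is all of $B$, not just a countable union of subvarieties --- alternatively one can bypass the Hodge conjecture and use instead that the Mumford--Tate group of the very general fibre over $B$ would have to contain that of $S$, which is already a maximal torus, forcing the VHS to be isotrivial by Deligne's theorem on the finiteness of monodromy-invariant Hodge classes). Once the $K_S$-action is a flat family of endomorphisms, the decomposition $T_\CC = \bigoplus_\tau T_\tau$ into $K_S$-eigenlines indexed by embeddings $\tau \colon K_S \hookrightarrow \CC$ is flat, and $T^{2,0}$ is one such $T_\tau$ by the definition of the Hodge structure, hence locally constant; so the period map is constant on $B$.

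The main obstacle --- and the place where real work is needed --- is the spreading-out/monodromy step: one must genuinely argue that the Hodge endomorphisms (equivalently the algebraic cycle classes) of the generic fibre propagate to a \emph{flat} family over a finite \'etale cover of $B$, and that this flatness forces the Hodge decomposition itself to be flat. The cleanest route is via the Mumford--Tate group: the generic Mumford--Tate group of the VHS on $T$ over $B$ contains the Mumford--Tate group of the CM fibre, which for a CM Hodge structure of K3 type is a torus $T^{\rm MT} \subset {\rm Res}_{K_S/\QQ}\mathbb{G}_m$ of dimension equal to that ensuring $\dim_{K_S} T = 1$; but a torus is its own connected Mumford--Tate group only for isotrivial VHS, since the image of monodromy, being contained in the derived group of the generic MT group, must be trivial. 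I would expect to cite \cite{HuyK3} for the rigidity of CM Hodge structures and for the statement that the Hodge conjecture (equivalently, in this setting, the Kuga--Satake-free input) is available for $S \times S$ when $S$ has CM, and to be careful that all base changes are by \emph{finite} morphisms so that ``defined over $\bar\QQ$'' is preserved. The converse direction of the ``equivalently'' --- that $K_S = k_S$ is implied by $S$ being defined over $\bar\QQ$ together with algebraicity of periods, or is exactly the CM condition by Proposition~\ref{prop:CMKk} --- is immediate from the already-established Proposition~\ref{prop:CMKk}, which identifies $K_S = k_S$ with CM type.
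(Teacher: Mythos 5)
Your proposal follows the same overall strategy as the paper's proof: realize the CM action on $T(S)$ by algebraic cycles on $S\times S$, spread $S$ together with these cycles out over a positive-dimensional $\bar\QQ$-variety, and derive a contradiction by showing the resulting family is isotrivial. The algebraicity input, however, is not the full Hodge conjecture for $S\times S$ ``known by Mumford and others via Kuga--Satake'' --- for a general K3 surface that is open. The paper uses Buskin's theorem that rational Hodge isometries of K3 surfaces are algebraic, applied to a single primitive norm-one element $\alpha$ with $K_S=\QQ(\alpha)$; for a CM surface this does yield the Hodge conjecture for $S\times S$, but that is the consequence, not the input. Where you genuinely diverge is the endgame: you argue that the flat $K_S$-action forces $H^{2,0}$ into a flat eigenline, so the period map is locally constant; the paper instead uses only semicontinuity of the rank of the span of the classes $[\kz_{t'}]_*^k\gamma_{t'}$ to bound the Picard number of nearby fibres by $\rho(S)$, and then invokes the density of the Noether--Lefschetz loci in any non-isotrivial family. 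Your eigenline argument is correct and more conceptual; the paper's is more economical, needing one cycle and no eigenspace decomposition.

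Two steps in your write-up need repair. First, the isomorphism between a very general closed fibre $\ks_t$ of the spread-out family and $S$ is induced by an automorphism of $\CC$, which does \emph{not} preserve Hodge structures; so the assertion that ``the generic fibre has CM'' is not automatic. One must transport the action of the cycle through the $\ell$-adic realization (as the paper does) or through quantities preserved by $\Aut(\CC)$ such as the Picard number, and only then deduce that $[\kz_t]$ generates a CM field of full degree inside $\End_{\rm Hdg}(T(\ks_t))$. Second, your proposed Mumford--Tate shortcut --- ``the generic MT group contains the maximal torus coming from $S$, hence the VHS is isotrivial'' --- fails as stated: the full orthogonal group of the transcendental lattice also contains a maximal torus, and containing a torus does not kill the derived group. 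What actually works is that the generic Mumford--Tate group \emph{centralizes} the flat, algebraically realized $K_S$-action and is therefore itself contained in a torus; but that again requires the spreading out of the cycles, so this route does not bypass the algebraicity input as you suggest.
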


\begin{proof} First note that by virtue of  Proposition \ref{prop:CMKk},  CM is indeed equivalent to the equality $K_S=k_S$.

Pick a primitive element $\alpha$ of $K_S$ of norm one, i.e.\ $\alpha$ generates $K_S$ and, viewed as an endomorphism of the Hodge structure $T(S)$, is an isometry. As it has been proved recently in \cite{Buskin},
$\alpha$ as a class in $H^{2,2}(S\times S,\QQ)$ is algebraic, see also \cite{HuyHC} for a motivic interpretation.

Now one applies the usual `spread out' technique to $S$ and  a cycle $Z$ on $S\times S$
representing $\alpha$. More precisely,
$S$ and $Z$ are both defined over a finitely generated field extension  $L/\bar\QQ$, i.e.\
$S\cong S_0\times_L\CC$ and $Z=Z_0\times_L\CC$. Viewing $L$ as
a function field of a variety $Y$ over $\bar\QQ$ and inverting denominators, we may consider $S_0$
as the scheme theoretic fibre $\ks_\eta$ of a family $\ks\to Y$. Taking the closure $\kz$
of $Z_0$ in $\ks\times_Y\ks$ and shrinking $Y$ if necessary produces a relative flat cycle. 
%The final step consists of showing that $\ks\to T$ is isotrivial, which implies that $S$ is defined over $\bar\QQ$.
Next, let  $\ks_\CC\to Y_\CC$ be the base change to a family of complex K3 surfaces
and consider the action of the fibres of the cycle $\kz_\CC$ on the transcendental part $T(\ks_t)$,
$t\in Y(\CC)$. For very general $t\in Y(\CC)$,
the fibre $\ks_t$ as a scheme is isomorphic to the original $S$ via a base change with respect to a chosen
embedding $L\subset\CC$. The isomorphism is compatible with the $\ell$-adic action of $\kz_t$
and, therefore, the action of $[\kz_t]$ and $\alpha$ on the $\ell$-adic transcendental
part $T(\ks_t)_{\QQ_\ell}$ coincide. In particular, on that fibre the action of $\kz_t$
coincides with the action of $\alpha$ and, therefore, the classes
$[\kz_t]_*^k\gamma_t$, $k=1,\ldots$, span a subspace of dimension $[K_S:\QQ]$ (namely $T(\ks_t)$) for any $0\ne \gamma_t\in T(\ks_t)$. With $\gamma_t$ chosen to depend continuously on $t$, also the dimension of the subspace depends semi-continuously on  $t$ and, therefore, stays  constant on
nearby fibres $\ks_{t'}$
%\footnote{Make this more precise!} 
Hence, $\dim_\QQ T(\ks_{t'})\geq
\dim_\QQ T(S)$. However, the latter implies that $\rho(\ks_{t'})\leq \rho(\ks_t)=\rho(S)$ for all $t'$ in a neighbourhood of $t$,
which immediately implies that the family is isotrivial, cf.\ \cite[Prop.\ 6.2.9]{HuyK3} or \cite[Sec.\ 17.3.4]{Voisin}.
\end{proof}

%\subsection{Algebraic and transcendental periods} 
\begin{remark}
A converse of Proposition \ref{prop:CMimpliesQbar}
was proved in \cite{Tretkoff}: If a projective K3 surface $S$ is defined over $\bar\QQ$
and its period field $k_S$ is algebraic, i.e.\ $k_S\subset\bar\QQ$, then $S$ has CM. 

This is a K3 analogue of the classical result for elliptic curves that the only elliptic curves
$\CC/(\ZZ\oplus\ZZ\cdot\tau)$ with $\tau$ and $j(\tau)$ algebraic are CM elliptic curves, i.e.\
when $\tau$ is imaginary quadratic.

Via the Kuga--Satake construction, the problem is reduced to the analogous statement for abelian varieties which had been settled by Cohen and Shiga--Wolfart \cite{Cohen,ShigaWolfart}, see also \cite[Thm.\ 1.3]{UllmoYafaev}. It would certainly
be interesting to find a geometric argument not relying on the Kuga--Satake construction.
\end{remark}

According to Corollary \ref{cor:notCM} we know that the algebraic fibres
$\ks_t$ over the equator do not have complex multiplication. This is strengthened by the following result.

\begin{cor}\label{cor:notQbar}
Assume $(S,L)$ is a polarized K3 surfaces with complex multiplication and Picard number $\rho(S)<20$. Then no algebraic fibre $\ks_t$ of the associated twistor space with $t$ contained in the equator is defined over $\bar\QQ$.
\end{cor}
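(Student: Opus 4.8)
The plan is to derive a contradiction from the assumption that such a fibre is defined over $\bar\QQ$, playing the non-CM statement of Corollary \ref{cor:notCM} against the converse of Proposition \ref{prop:CMimpliesQbar} recalled in the remark following it. So suppose $\ks_t$ is an algebraic fibre of $\ks\to\PP^1$ with $t$ contained in the equator $S^1_\ell$, and suppose in addition that $\ks_t$ is defined over $\bar\QQ$.

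First I would record that the period field $k_S$ is algebraic: since $S$ has CM, Proposition \ref{prop:CMKk} gives $k_S=K_S$, and $K_S$ is a number field, so $k_S\subset\bar\QQ$. Next, because $\ks_t$ is algebraic there is a class $\ell'\in T(S)\oplus\QQ\cdot\ell$ with $(\ell'.\ell')>0$ orthogonal to the period point $z=x_{\ks_t}\in\PP^1_\ell$, exactly as in the proof of Corollary \ref{cor:notCM}; in particular $\rho_z\geq 1$. Lemma \ref{lem:QbarQbar} then applies to $z$ and, together with $k_S\subset\bar\QQ$, yields $k_{\ks_t}=k_z\subset\bar\QQ$. (One can also obtain this directly from the computation in the proof of Lemma \ref{lem:equatorbad}, which shows that over the equator $k_{\ks_t}$ is a number field, even of degree $>2$.) Thus $\ks_t$ is a projective K3 surface defined over $\bar\QQ$ whose period field is algebraic, so by the converse of Proposition \ref{prop:CMimpliesQbar} the surface $\ks_t$ has complex multiplication. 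Since $\rho(S)<20$, this contradicts Corollary \ref{cor:notCM}, which asserts precisely that no algebraic fibre over the equator has complex multiplication. Hence no such $\ks_t$ can be defined over $\bar\QQ$, which is the statement of Corollary \ref{cor:notQbar}.

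The argument is an assembly of results already established, and I do not expect a substantial obstacle; the one step deserving care is the passage to $k_{\ks_t}\subset\bar\QQ$, where one must use that the fibre is genuinely projective, so that Lemma \ref{lem:QbarQbar} (equivalently the proof of Lemma \ref{lem:equatorbad}) is available, rather than merely that it is a K3 surface. It is worth pointing out where the hypothesis $\rho(S)<20$ is used: if $\dim_\QQ T(S)=2$ the equator fibres may themselves have CM, and then both Corollary \ref{cor:notCM} and the present corollary genuinely fail — consistent with the fact that Lemma \ref{lem:equatorbad} requires $\dim_\QQ T>2$.
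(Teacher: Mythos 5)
Your proof is correct and follows exactly the paper's own argument: the paper likewise deduces $k_{\ks_t}\subset\bar\QQ$ from Lemma \ref{lem:QbarQbar}, invokes \cite{Tretkoff} to conclude that $\ks_t$ would have CM, and contradicts Corollary \ref{cor:notCM}. Your additional remarks — that $k_S\subset\bar\QQ$ comes from Proposition \ref{prop:CMKk} and that algebraicity of the fibre supplies the hypothesis $\rho_z\geq 1$ needed for Lemma \ref{lem:QbarQbar} — are correct elaborations of steps the paper leaves implicit.
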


\begin{proof}
Suppose $\ks_t$ were defined over $\bar\QQ$. According to Lemma \ref{lem:QbarQbar},
the period field $k_{\ks_t}$ is also algebraic. Then use \cite{Tretkoff} to conclude that
$\ks_t$ has complex multiplication which is excluded by Corollary \ref{cor:notCM}.
\end{proof}

%%%%%%%%%%%%%%%
\subsection{Period values of K3 surfaces with CM}\label{sec:Transc}

Assume that a K3 surface $S$ can be defined over $\bar\QQ$, i.e.\ there exists
a K3 surface $S^{\rm o}$ over $\bar \QQ$ such that $S\cong S^{\rm o}\times_{\bar \QQ} \CC$.
 Pick a regular form $0\ne\sigma^{\rm o}\in H^0(S^{\rm o},\Omega^2_{S^{\rm o}/\bar\QQ})$ 
 and consider $(\sigma^{\rm o}.\gamma)$ for a class  $0\ne \gamma\in T(S)$. These  values
 are all expected to be transcendental
 which would be the weight-two analogue of the classical fact that
 for an elliptic curve $E^{\rm o}$ over $\bar\QQ$ the integrals
 $\int_\delta\omega^{\rm o}$ with $\omega^{\rm o}\in H^0(E^{\rm o},\Omega_{E^{\rm o}/\bar\QQ})$ and
 $\delta\in H^1(E,\ZZ)$ are transcendental numbers. The problem for K3 surfaces seems open, but see \cite[Exa.\ 3]{Wuestholz}. Questions of irrationality have been dealt with in \cite{BostCharles}.

\begin{remark}
Often, in this context, the field $\QQ((\sigma^{\rm o}.\gamma))$ is called the period field, but it should not
be confused with the period field $k_S$ in the sense of Sections \ref{sec:ks} and \ref{sec:Dict}, which for example for K3 surfaces with CM is algebraic.
\end{remark}

Note that for a different choice of $\sigma^{\rm o}$  in $H^0(S^{\rm o},\Omega^2_{S^{\rm o}/\bar\QQ})$
the value of $(\sigma^{\rm o}.\gamma)$ changes by an algebraic factor. 
As a consequence of Lemma \ref{lem:PVindep}, one has the following
stronger fact.

\begin{cor}
Assume $S$ is a complex projective K3 surface with complex multiplication.
Then the \emph{period value} of $S$ 
$$r_{\scriptscriptstyle S}\coloneqq r_{\scriptscriptstyle T(S),\sigma^{\rm o}}\in\CC^\ast/\bar\QQ^\ast$$
is well-defined. Here, $S^{\rm o}$ is a model of $S$ over $\bar \QQ$ and $0\ne\sigma^{\rm o}\in H^0(S^{\rm o},\Omega^2_{S^{\rm o}/\bar\QQ})$.
\end{cor}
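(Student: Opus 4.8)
The plan is to reduce the statement to Lemma~\ref{lem:PVindep} once $\sigma^{\rm o}$ has been recognised as a generator of a $\bar\QQ$-line in $T(S)^{2,0}$. By Proposition~\ref{prop:CMimpliesQbar} a model $S^{\rm o}$ of $S$ over $\bar\QQ$ exists; I would fix one, together with an isomorphism $S^{\rm o}\times_{\bar\QQ}\CC\cong S$ of complex varieties. Flat base change gives $H^0(S^{\rm o},\Omega^2_{S^{\rm o}/\bar\QQ})\otimes_{\bar\QQ}\CC\cong H^0(S,\Omega^2_S)=H^{2,0}(S)$, and since $\NS(S)$ is of type $(1,1)$ one has $H^{2,0}(S)=T(S)^{2,0}$. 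Thus $\Sigma\coloneqq\bar\QQ\cdot\sigma^{\rm o}$ is a $\bar\QQ$-line in $T(S)^{2,0}$ in the sense of Section~\ref{sec:PV}, and $r_{T(S),\sigma^{\rm o}}=r_{T(S),\Sigma}$.

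As $S$ has complex multiplication, so does $T(S)$, and Lemma~\ref{lem:PVindep} then shows that $r_{T(S),\Sigma}=(\sigma^{\rm o}.\gamma)\bmod\bar\QQ^\ast$ is independent of the choice of $0\ne\sigma^{\rm o}\in\Sigma$ and of $0\ne\gamma\in T(S)$. Any two nonzero regular $2$-forms on $S^{\rm o}$ span $\Sigma$ and hence differ by a factor in $\bar\QQ^\ast$, so this already gives that $r_S$ is well-defined once $S^{\rm o}$ and the comparison isomorphism have been fixed; relative to the remark preceding the corollary, the genuinely new content is the independence of $\gamma$.

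It remains to remove these two auxiliary choices, which is the only step that needs care. Replacing the comparison isomorphism composes with an automorphism $\psi$ of $S$; since $\psi$ acts on $H^2(S,\ZZ)$ and preserves the line $H^{2,0}(S)$, it acts there by an eigenvalue of an integral matrix, hence by an algebraic scalar, so $r_{T(S),\psi^\ast\Sigma}\equiv r_{T(S),\Sigma}$. For independence of the model I would use the standard fact that any two models of $S$ over $\bar\QQ$ are already isomorphic over $\bar\QQ$: choose a number field $F$ over which both are defined, observe that the complex isomorphism between their base changes is defined over some finitely generated extension $L$ of $\bar\QQ$, spread it out over an open subset of a $\bar\QQ$-variety with function field $L$, and specialise at a $\bar\QQ$-point (whose special fibres are again the two models, since they come from $F$). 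A $\bar\QQ$-isomorphism between them carries $\sigma^{\rm o}$ of one model to a $\bar\QQ^\ast$-multiple of that of the other and, after base change, matches the two $\bar\QQ$-lines in $H^{2,0}(S)$ up to an automorphism of $S$; with the previous observation this shows $r_S$ depends only on $S$. The main obstacle, then, is not a hard computation but exactly this bookkeeping: ensuring that both ambiguities act on $H^{2,0}(S)$ only through $\bar\QQ^\ast$. (If one prefers to regard the model as part of the data, the corollary is immediate from Lemma~\ref{lem:PVindep}; cf.\ also Lemma~\ref{lem:r-1sigma}.)
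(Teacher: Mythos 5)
Your argument is correct and follows the same route as the paper, which simply observes that the model $S^{\rm o}$ is unique up to isomorphism, so the induced $\bar\QQ$-line in $H^0(S,\Omega^2_{S/\CC})$ is well-defined, and then invokes Lemma \ref{lem:PVindep}. You have merely spelled out the details the paper leaves implicit (the spreading-out argument for uniqueness of the model and the fact that automorphisms act on $H^{2,0}$ through $\bar\QQ^\ast$), and those details are right.
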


\begin{proof}
The model $S^{\rm o}$ is unique up to isomorphism and so is the induced
$\bar\QQ$-line
$H^0(S^{\rm o},\Omega^2_{S^{\rm o}/\bar\QQ})\subset H^0(S,\Omega^2_{S/\CC})$.
\end{proof}

Similarly, Lemma \ref{lem:r-1sigma} implies that
$$r_{\scriptscriptstyle S}^{-1}\cdot \sigma^{\rm o}\in T(S)\otimes\bar\QQ\text{ and }
(\sigma^{\rm o}.\bar\sigma^{\rm o})\equiv r_{\scriptscriptstyle S}\cdot \bar r_{\scriptscriptstyle S}\in \CC^\ast/\bar\QQ^\ast.$$

Currently, there are no techniques to compute or  guess the natural $\bar\QQ$-line
$H^0(S^{\rm o},\Omega^2_{S^{\rm o}/\bar\QQ})\subset H^0(S,\Omega^2_{S/\CC})$
for a K3 surface $S$ with CM. In the case $\rho(S)=20$, the problem reduces
to the computation of $\left(\int_\delta\omega^{\rm o}\right)^2$ for $\omega^{\rm o}\in H^0(E^{\rm o},\Omega_{E^{\rm o}/\bar\QQ})$. Here, $S$ is covered by the Kummer surface associated
with a product $E_1\times E_2$ of two elliptic curves both isogenous to a CM elliptic
curve $E^{\rm o}\times_{\bar \QQ}\CC$.

\begin{remark}
In principle one could hope that once the natural $\bar\QQ$-line for $S$ is found, the ones for all
other twistor fibres $\ks_T$ with CM can be predicted. However, at this point the
results of Section \ref{sec:PVTW} only tell us that
$r_{\scriptscriptstyle \ks_t}=a_t \cdot r_{\scriptscriptstyle S}$
assuming $\sigma_t^{\rm o}=a_t\sigma^{\rm o}+b_t\bar\sigma^{\rm o}+c_t\ell$, but how to
predict $a_t$, or equivalently $c_t$, seems unclear.

As according to Grothendieck's period conjecture any
 $\bar\QQ$-algebraic relation between $r_{\scriptscriptstyle S}$ and $r_{\scriptscriptstyle \ks_t}$ should
be motivic, we actually expect that infinitely many of the $r_{\scriptscriptstyle \ks_t}$ are independent of $r_{\scriptscriptstyle S}$, cf.\ \cite[Sec.\ 2.3]{HuyAnn}
\end{remark}

%%%%%%%%%%%%%%%%%%%%%%%%%%%%%%%%%%%%%%

\end{document}